\theoremstyle{plain}
\newtheorem{theorem}{Theorem}[section]
\newtheorem{cor}[theorem]{Corollary}
\newtheorem{prop}[theorem]{Proposition}
\newtheorem{lemma}[theorem]{Lemma}
\theoremstyle{definition}
\newtheorem{definition}[theorem]{Definition}
\newtheorem{rmk}[theorem]{Remark}
\numberwithin{equation}{section}
\newtheorem*{theoremA*}{Theorem A}
\newtheorem*{theoremB*}{Theorem B}
\newtheorem*{theorem1*}{Theorem A'}
\newtheorem*{theoremC*}{Theorem C}
\newtheorem*{theoremD*}{Theorem D}
\newtheorem*{theoremE*}{Theorem E}
\newtheorem*{theoremF*}{Theorem F}
\newtheorem*{theoremE2*}{Theorem E2}
\newtheorem*{theoremE3*}{Theorem E3}
\newcommand{\bs}{\backslash}
\newcommand{\Cc}{\mathcal{C}}
\newcommand{\Mcal}{\mathcal{M}}
\newcommand{\C}{\mathbb{C}}
\newcommand{\G}{\mathbb{G}}
\newcommand{\E}{\mathcal{E}}
\newcommand{\Z}{\mathbb{Z}}
\newcommand{\R}{\mathbb{R}}
\newcommand{\N}{\mathbb{N}}
\newcommand{\Gr}{\operatorname{Gr}}
\newcommand{\Ind}{\operatorname{Ind}}
\newcommand{\Int}{\operatorname{int}}
\newcommand{\Hom}{\operatorname{Hom}}
\newcommand{\End}{\operatorname{End}}
\newcommand{\tr}{\operatorname{tr}}
\newcommand{\im}{\operatorname{Im}}
\newcommand{\Ad}{\operatorname{Ad}}
\newcommand{\ad}{\operatorname{ad}}
\newcommand{\vol}{\operatorname{vol}}
\newcommand{\supp}{\operatorname{supp}}
\newcommand{\Span}{\operatorname{span}}
\newcommand{\rank}{\operatorname{rank}}
\newcommand{\re}{\operatorname{Re}}
\def\dotvar{\, \cdot\,}
\def\hat{\widehat}
\def\af{\mathfrak{a}}
\def\bfrak{\mathfrak{b}}
\def\gf{\mathfrak{g}}
\def\cf{\mathfrak{c}}
\def\ef{\mathfrak{e}}
\def\hf{\mathfrak{h}}
\def\kf{\mathfrak{k}}
\def\lf{\mathfrak{l}}
\def\mf{\mathfrak{m}}
\def\nf{\mathfrak{n}}
\def\pf{\mathfrak{p}}
\def\qf{\mathfrak{q}}
\def\sf{\mathfrak{s}}
\def\tf{\mathfrak{t}}
\def\uf{\mathfrak{u}}
\def\vf{\mathfrak{v}}
\def\la{\langle}
\def\ra{\rangle}
\def\1{{\bf1}}
\def\U{\mathcal{U}}
\def\Cc{\mathcal{C}}
\def\D{\mathcal {D}}
\def\G{\mathcal{G}}
\def\Oc{\mathcal{O}}
\def\P{\mathcal{P}}
\def\M{\mathcal{M}}
\def\oline{\overline}
\def\W{\mathcal{W}}
\def\tilde{\widetilde}
\def\v{\mathbf{v}}
\def\w{\mathbf {w}}
\title[Discrete series]{The infinitesimal characters of discrete series for real spherical spaces}
\subjclass[2000]{22F30, 22E46, 53C35, 22E40}
\begin{document}
\date{\today}

\begin{abstract}
Let $Z=G/H$ be the homogeneous space of a real reductive group and a unimodular
real spherical subgroup, and consider the regular representation of $G$ on $L^2(Z)$.
It is shown that all representations of the discrete series, that is, the irreducible
subrepresentations of $L^2(Z)$, have infinitesimal characters which are real
and  belong to a lattice.
Moreover, let $K$ be a maximal compact subgroup of $G$.
Then each irreducible representation of $K$ occurs in a finite set of such
discrete series representations only. Similar results are obtained for the
twisted discrete series, that is, the discrete components of the space of square integrable
sections of a line bundle, given by a unitary character on an abelian extension of~$H$.
\end{abstract}

\author[Kr\"otz]{Bernhard Kr\"{o}tz}
\email{bkroetz@gmx.de}
\address{Universit\"at Paderborn, Institut f\"ur Mathematik\\Warburger Stra\ss e 100,
33098 Paderborn}

\author[Kuit]{Job J. Kuit}
\email{j.j.kuit@gmail.com}
\address{Universit\"at Paderborn, Institut f\"ur Mathematik\\Warburger Stra\ss e 100,
33098 Paderborn}

\author[Opdam]{Eric M. Opdam}
\email{e.m.opdam@uva.nl}
\address{University of Amsterdam, Korteweg-de Vries Institute for Mathematics\\P.O. Box 94248, 1090 GE Amsterdam}

\author[Schlichtkrull]{Henrik Schlichtkrull}
\email{schlicht@math.ku.dk}
\address{University of Copenhagen, Department of Mathematics\\Universitetsparken 5,
DK-2100 Copenhagen \O}

\thanks{JJK was funded by the Deutsche Forschungsgemeinschaft grant 262362164}

\maketitle

\section{Introduction}
Let $Z=G/H$ be a homogeneous space attached to a real reductive group $G$ and a closed subgroup
$H$.
A principal objective in the harmonic analysis of $Z$
is the understanding of the $G$-equivariant spectral
decomposition of the space $L^2(Z)$ of square integrable half-densities.
The irreducible components of $L^2(Z)$ are of particular interest,
they comprise the discrete series for $Z$. We will assume that
$Z$ is unimodular, that is, it
carries a positive $G$-invariant Radon measure. Then $L^2(Z)$ is identified as
the space of square integrable functions  with respect to this measure.

Later on we shall restrict ourselves to the case where $Z$ is real spherical, that is,
the action of a minimal parabolic
subgroup $P\subseteq G$ on $Z$ admits an open orbit.  Symmetric spaces are real spherical, as well
as real forms of complex spherical spaces.  We mention that a classification of real
spherical spaces $G/H$ with $H$ reductive became recently available, see \cite{KKPS} and \cite{KKPS2}.

For symmetric spaces it is known (see \cite{Delorme}, \cite{vdBS})
that the spectral components of $L^2(Z)$ are
built by means of induction from certain parabolic subgroups of $G$.
The inducing representations belong to the discrete series of a symmetric space
of the Levi subgroup, twisted by unitary characters on its center.
For real spherical spaces the results on tempered
representations obtained in \cite{KKS2} suggest
similarly that the spectral decomposition of $L^2(Z)$ will be
built from the twisted discrete spectrum of a certain
finite set of satellites $Z_I=G/H_I$ of $Z$, which are again
unimodular real spherical spaces. A first step towards obtaining a spectral
decomposition is then to obtain key properties of the
twisted discrete series for all unimodular real spherical spaces.

As usual we write $\hat G$ for the unitary dual of $G$ and disregard the distinction
between equivalence classes  $[\pi]\in \hat G$ and their representatives $\pi$.
Representations $\pi \in \hat G$ which occur in $L^2(Z)$ discretely will be called representations of the {\it discrete series
for $Z$}.  This notion distinguishes a subset of $\hat G$ which we denote by $\hat G_{H, {\rm d}}$.
We write $\hat G_{\rm d}$ for the discrete series
of $G$, i.e., $\hat G_{\rm d}=\hat G_{\{e\},{\rm d}}$.
Note that in general there is no relation between the sets $\hat G_{\rm d}$ and $\hat G_{H,{\rm d}}$ if $H$ is non-trivial.

To explain the notion of being twisted we recall the automorphism group
$N_G(H)/ H$ of $Z$, where $N_G(H)$ denotes the normalizer of $H$.
It gives rise to a right action of $N_G(H)/ H$ on $L^2(Z)$ commuting with the left regular action of $G$.
For a real spherical space
$N_G(H)/ H$ is fairly well behaved:
$N_G(H)/H$ is a product of a compact group
and a non-compact torus \cite{KKS}.
It is easy to see that in this case
there exists no discrete spectrum unless $N_G(H)/ H$ is compact.
Let ${\mathcal A}$ be a maximal non-compact torus in $N_G(H)/H$. Hence
if ${\mathcal A}$ is non-trivial, there exist no discrete series representations for $Z$.
In this case we generalize the notion of discrete series as follows.
We have an equivariant disintegration into $G$-modules
$$
L^2(Z) \simeq \int_{ \hat {\mathcal A}}^\oplus  L^2(Z; \chi) \ d \chi\,.
$$
Here $\hat {\mathcal A}$ denotes the set of unitary characters
$\chi$ of $\mathcal A$,
and $L^2(Z;\chi)$ denotes the space of functions on $Z$,
which transform
by $\chi$ (times a modular character) and are square integrable
modulo $\mathcal A$ (as half-densities, since in general $G/N_G(H)$
is not unimodular).
The set of representations $\pi \in \hat G$ which are in the discrete spectrum of
$L^2(Z;\chi)$ is called the {\it $\chi$-twisted discrete series} and is denoted $\hat G_{H,\chi}$. The union $\hat G_{H, {\rm td}}$
of these sets over all  $\chi\in\hat{\mathcal A}$
is referred to as the
{\it twisted discrete series for $Z$}.

Let $P=MAN$ be a Langlands decomposition of the minimal parabolic
subgroup $P$. Denote by $\mf$ and $\af$
the Lie algebras of $M$ and $A$ respectively.  Choose a maximal torus $\tf \subseteq \mf$  and set
$\cf:= \af +i \tf$.  We note that $\cf_\C$ is Cartan subalgebra of $\gf_\C$
and denote by $W_{\cf}$ the Weyl group
of the root system $\Sigma(\gf_\C, \cf)\subseteq \cf^*$.  For every $\pi \in \hat G$ we denote by
$\chi_\pi\in \cf_\C^*/ W_\cf$ its infinitesimal character and recall a theorem of Harish-Chandra
(\cite[Thm.~7]{H-C1956}), which asserts
that the map
\begin{equation}\label{infinitesimal character map}
 {\mathfrak X}:  \hat G \to \cf_\C^*/ W_\cf,  \ \ \pi\mapsto \chi_\pi
\end{equation}
has uniformly finite fibers. Note that ${\mathfrak X}$ is continuous if $\hat G$ is endowed with the Fell topology.

A priori it is not clear that ${\mathfrak X}(\hat G_{H, {\rm d}})$ or ${\mathfrak X}(\hat G_{H, \chi})$ is a discrete subset of $\cf_{\C}^{*}/W_{\cf}$.
However, we believe this to be true for general real algebraic homogeneous spaces $Z$.
For real spherical spaces $Z$ it is a
consequence of the main theorem, Theorem \ref{main thm} below, which slightly simplified can be phrased as follows.

\begin{theorem} \label{thm intro} Let $Z=G/H$ be a
unimodular real spherical space. Assume that the pair $(G,H)$ is real algebraic. Then there exists a $W_\cf$-invariant lattice $\Lambda_Z \subseteq \cf^*$, rational with respect to the root system in $\cf$,
 such that:
\begin{enumerate}[(i)]
\item\label{Thm intro item 1} ${\mathfrak X} (\hat G_{H, {\rm d}}) \subseteq \Lambda_Z/ W_\cf$,
\item\label{Thm intro item 2} $\re {\mathfrak X} (\hat G_{H, {\rm td}}) \subseteq \Lambda_Z/ W_\cf$.
\end{enumerate}
\end{theorem}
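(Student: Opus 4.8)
The plan is to realize the (twisted) discrete series inside a space of functions on $Z$ with good growth properties and to exploit the algebraicity of $(G,H)$ to force the relevant Harish-Chandra parameters into an explicitly controlled arithmetic set. The starting point is the observation that a discrete series representation $\pi$ for $Z$ is an irreducible submodule of $L^2(Z)$, hence its $K$-finite, $\Zc(\gf_\C)$-finite matrix coefficients against vectors in the $H$-fixed (or $\chi$-twisted $H$-semiinvariant) functionals are square integrable functions on $Z$. By the theory of the radial decomposition for real spherical spaces (the ``polar map'' $G = K A_Z H$ up to a compact set, and the associated system of differential equations coming from $\Zc(\gf_\C)$), such functions admit convergent asymptotic expansions along the walls of the compression cone, with exponents governed by a finite set $\Lambda(\pi) \subseteq \cf_\C^*$ of generalized weights built from $\chi_\pi$ and the roots of $\af_Z$. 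Square integrability translates into a strict inequality on the real parts of these leading exponents (a Casselman--Milicic type criterion on $Z$), which in particular pins down $\re\chi_\pi$ as a real linear functional. This gives reality of the infinitesimal character of the (untwisted) discrete series and of $\re\chi_\pi$ in the twisted case; the substantive point that remains is \emph{integrality}.

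For integrality I would argue as follows. Because $(G,H)$ is real algebraic, $Z$ admits a $G$-equivariant algebraic (affine) model, and the ring $\Zc(\gf_\C)$ acts on regular functions on $Z_\C$ through an algebra of algebraic differential operators. The key step is to produce, for each discrete series parameter, a nonzero solution of the resulting holonomic system whose exponents along the boundary divisors must be \emph{integral} shifts of a fixed base point: concretely, one analyzes the indicial equation of this system at the boundary components of a smooth toroidal (wonderful-type) compactification $\oline Z$ of $Z$. The boundary divisors are $G$-stable and algebraic, and the monodromy of the regular-singular connection $\Zc(\gf_\C) \to \mathcal{D}(Z_\C)$ around them is therefore defined over $\Z$ in a suitable basis; equivalently, the characteristic exponents of square integrable eigenfunctions along each divisor lie in a single coset of $\Z$ in $\C$. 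Intersecting these congruence conditions over all boundary divisors — there are finitely many, and their normal directions span $\af_Z^*$ because $Z$ is real spherical — forces $\chi_\pi$ (resp.\ $\re\chi_\pi$) to lie in a fixed translate of a lattice; combined with the reality and the finiteness of $\Zc(\gf_\C)$-infinitesimal characters compatible with a given $A_Z$-weight datum, one upgrades ``translate of a lattice'' to ``lattice'', after absorbing the translation into the choice of $\Lambda_Z$ and using $W_\cf$-invariance. The $W_\cf$-invariance of $\Lambda_Z$ is automatic since $\chi_\pi \in \cf_\C^*/W_\cf$ and one can always replace a lattice by the sublattice it generates together with its $W_\cf$-translates (still a lattice, as $W_\cf$ is finite).

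The main obstacle is the integrality step, i.e.\ showing that the boundary exponents of \emph{all} discrete series eigenfunctions, and not just of specific constructed ones, lie in a common coset of a lattice. Two difficulties must be handled: first, the system coming from $\Zc(\gf_\C)$ on $Z$ need not be a single ODE but a holonomic system whose indicial roots at a divisor form a finite multiset that varies with $\chi_\pi$; one must show this multiset is, up to an overall integer-translation ambiguity coming from the normal crossing structure, determined in a way that is constant along the discrete spectrum. This is where I expect to use the algebraicity most heavily: the connection $(\mathcal{O}_{\oline Z}$-module with logarithmic poles$)$ attached to a $W_\cf$-orbit of parameters is an algebraic object, and its residue along a boundary divisor is an algebraic endomorphism whose eigenvalues, \emph{viewed modulo $\Z$}, are locally constant in $\chi_\pi$ hence constant on each connected family — but the discrete spectrum need not be connected, so one additionally needs an a priori bound (from the uniform finiteness of fibers of $\mathfrak X$, together with temperedness/square-integrability estimates) confining $\chi_\pi$ to a region on which only finitely many residue-eigenvalue patterns can occur. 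Second, one must verify that the normal directions to the $G$-stable boundary divisors of the chosen compactification actually span $\cf^*$ and not merely $\af_Z^*$; this requires incorporating the ``$\mf$-part'' $i\tf$ of $\cf$, which I would do by running the same boundary analysis fiberwise over the discrete series of the Levi satellites $Z_I$, invoking the inductive structure alluded to in the introduction and, at the bottom of the induction, the classical integrality of Harish-Chandra parameters of (genuine, group-case) discrete series. Assembling the congruences from the $\af_Z$-directions and from the $i\tf$-directions yields the lattice $\Lambda_Z$.
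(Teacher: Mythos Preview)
Your integrality argument has a genuine gap. You claim that the residue eigenvalues of the logarithmic connection along a boundary divisor, viewed modulo $\Z$, are locally constant in $\chi_\pi$. This is not true for a \emph{family} of regular-singular connections: the $\Zc(\gf_\C)$-eigensystem with eigenvalue $\chi_\pi$ is not a fixed algebraic $\mathcal{D}$-module but one that varies with the parameter, and its indicial exponents at the boundary are (affine-)linear functions of $\chi_\pi$, not locally constant modulo $\Z$. What is rigid is the monodromy of a single algebraic connection, but the object you are studying is the eigensheaf for a transcendental eigenvalue, so no algebraicity of exponents is available a priori. You yourself note that the discrete spectrum need not be connected; your proposed remedy (a priori bounds confining $\chi_\pi$ to a region with finitely many residue patterns) only gives boundedness, not that the parameters lie in a lattice. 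The inductive appeal to the satellites $Z_I$ is also circular: the decomposition of $L^2(Z)$ via discrete series of $Z_I$ is precisely what the present theorem is meant to enable (see the discussion of \cite{DKKS} in the introduction), not a tool you may invoke.

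The paper's route is entirely different and avoids compactifications and $\mathcal{D}$-module theory. One realizes $V$ as a quotient $\pi_{\lambda,\sigma}\twoheadrightarrow V$ of a minimal principal series (Lemma~\ref{disc cover}). The lifted functional $\eta\in(\pi_{\lambda,\sigma}^{-\infty})^H$ is supported on finitely many $H$-orbits in $P\backslash G$; a detailed asymptotic analysis of $m_{v,\eta}(\exp(tX)x\cdot z_0)$ for suitable $x$ and compactly supported $v$ (Theorem~\ref{leading behavior}), combined with the volume growth estimate of Proposition~\ref{vol growth} and the invariant Sobolev bound, yields both a strict negativity $(\re\lambda-\omega)|_{\af^+\setminus\bfrak}<0$ for some $\omega\in\Span_\R\Sigma(\lambda)$ and a partial integrality $\lambda|_{\bfrak}\in\frac12\Z[\Pi]|_{\bfrak}$ (Corollary~\ref{Cor negative and integral}). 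The decisive new input for integrality is \emph{not} boundary monodromy but rank-one \emph{intertwining operators}: there is an $N\in\N$, depending only on $G$, such that whenever $\lambda(\alpha^\vee)\notin\frac1N\Z$ the operator $I_\alpha(\lambda)$ is an isomorphism (Proposition~\ref{Prop intertwining operators}), producing a second quotient $\pi_{s_\alpha\lambda,s_\alpha\sigma}\twoheadrightarrow V$ and hence the same negativity constraints for $s_\alpha\lambda$. Iterating defines an equivalence class $[\lambda]$ in $\af_\C^*$, and a purely combinatorial argument with Weyl chambers (Theorem~\ref{fundamental lemma}) shows that any parameter whose entire class is ``integral-negative'' must satisfy $\re\lambda(\alpha^\vee)\in\frac1{N'}\Z$ for all $\alpha\in\Sigma$, with $N'$ depending only on $\Sigma$. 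The $i\tf$-component of the infinitesimal character needs no separate treatment: it is carried by $\sigma\in\hat M$ and is automatically integral.
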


A few remarks related to this theorem are in order.

\begin{rmk}\label{rmk intro}
\quad\begin{enumerate}[(1)]
\item
The statement in (\ref{Thm intro item 1}) implies
that the infinitesimal characters $\chi_\pi$ are {\it real} and
{\it discrete} for
$\pi\in \hat G_{H, {\rm d}}$.
Furthermore (see Corollary \ref{K-type} below),
these properties of $\chi_\pi$ lead to the following.
Let $K\subseteq G$ be a maximal compact subgroup.
For all $\tau\in\hat K$ and $\chi\in\hat{\mathcal A}$
the set
$$
\{ \pi \in \hat G_{H,\chi}\mid \Hom_K (\pi\big|_K, \tau)\neq 0\}
$$
is finite. In other words,
there are only finitely many $\chi$-twisted discrete series representations containing a
given $K$-type.
For p-adic spherical spaces of wavefront type this was shown by Sakellaridis and Venkatesh in
\cite[Theorem 9.2.1]{SV}.

\item
There is a simple relation between the leading exponents of generalized matrix coefficients attached to
$\pi \in \hat G_{H, {\rm td}}$
and the infinitesimal character $\chi_\pi$ of $\pi$
(cf.~Lemma \ref{disc cover}).
Further, twisted discrete
series can be described by inequalities satisfied by the leading exponents (cf. \cite{KKS2} or (\ref{DS1})-(\ref{DS2}) below).
The inclusion $\re {\mathfrak X} (\hat G_{H, {\rm td}}) \subseteq \Lambda_Z/ W_\cf$ then implies that
all real parts of leading exponents are uniformly bounded away from "rho". Phrased differently,
Theorem \ref{thm intro}(\ref{Thm intro item 2}) implies a  spectral gap for twisted discrete series.
In \cite{SV}, Prop.~9.4.8,  this is called "uniform boundedness
of exponents" and is a key fact for establishing the Plancherel
formula for p-adic spherical spaces of wavefront type.

\item
The lattice $\Lambda_{Z}$ can be taken of the form $\frac{1}{N}\Sigma(\gf_{\C},\cf)$, where $N$ is an integer which only depends on $\gf$. (We may use the integer $N$ from Theorem \ref{main thm}, which is the product of the integers from Theorem \ref{fundamental lemma} and Proposition \ref{Prop intertwining operators}. The latter two integers only depend on $\gf$.)
\end{enumerate}
\end{rmk}

Theorem \ref{thm intro} is the crucial ingredient for the uniform constant term approximation for tempered eigenfunctions in \cite{DKS}. Thus it lies at the heart of the Plancherel
theorem for $L^2(Z)$ in terms of Bernstein-morphisms, established in \cite{DKKS} and motivated by
\cite{SV}, Section 11.
Notice that the strategy of proof designed in \cite{SV} for the Plancherel
theorem differs from the earlier approach where the discrete
spectrum is classified first (see \cite{HC1} for groups and
\cite{vdBS}, \cite{Delorme} for symmetric spaces). In \cite{SV}
the discrete series is taken as a black
box which features a spectral gap, and the Plancherel theorem is established
without knowing the discrete spectrum explicitly.

For reductive groups an explicit parametrization of the discrete series $\hat G_{\rm d}$
was obtained  by Harish-Chandra \cite{HC}.
More generally, for symmetric spaces $G/H$ discrete series were constructed  by
Flensted-Jensen \cite{FJ}, and his work
was completed by Matsuki and Oshima \cite{MO} to a full classification of $\hat G_{H,{\rm d}}$.
For a general real spherical
space such an explicit parameter description appears currently to be out of reach
and for non-symmetric spaces the existence or non-existence of discrete series is known only in a few cases. See \cite[Corollary 5.6]{Kob} and in \cite[Corollary 4.5]{Huang}.

More importantly, the existence of discrete series can be  characterized
geometrically by the existence of a compact Cartan subalgebra in the group case,
and of a compact Cartan subspace in $\hf^\perp$ in the more general case of
symmetric spaces.
One can phrase this uniformly as:
\begin{equation}\label{d-conj}
\hat G_{H, {\rm d}}\neq \emptyset \iff
\Int\{ X\in \hf^\perp\mid X \ \text{elliptic}\} \neq \emptyset\,,
\end{equation}
where the interior $\Int$ is taken in $\hf^\perp$.
We expect that (\ref{d-conj}) is true for all algebraic homogeneous spaces $Z$.
A geometric
characterization for the existence of twisted discrete series is less clear;
in the real spherical
case we expect
\begin{equation} \label{td-conj}
\hat G_{H, {\rm td}}\neq \emptyset \iff
\Int\{ X\in N_{\gf}(\hf)^\perp\mid X \ \text{weakly elliptic}\} \neq \emptyset
\end{equation}
with $N_\gf(\hf)$ the normalizer of $\hf$ in $\gf$.

A combination of the Bernstein decomposition of $L^2(Z)$ in \cite{DKKS}
with soft techniques from microlocal analysis \cite{HW}  yields the implication
"$\Leftarrow$" in \eqref{d-conj}, see \cite[Th. 12.1]{DKKS}.  Developing the techniques in \cite{HW} a
bit further would
yield the more general implication "$\Leftarrow$"  in \eqref{td-conj}.  Let us point out that
we consider the  implication "$\Rightarrow$"  in \eqref{td-conj}  as one
of the most interesting current problems in this area.

Representations of the discrete series feature interesting additional structures.
For instance, for a reductive group Schmid realized the discrete spectrum in $L^2$-Dolbeault cohomology \cite{Schmid}. This was the first of series of realizations of the discrete series representations for reductive Lie groups. Vogan established that the
representations of the discrete series on a symmetric space are cohomologically induced \cite{Vogan}.
It would be interesting to know  for non-symmetric spaces
to which extent $\hat G_{H, {\rm d}}$ consists
of cohomologically induced representations.

\subsection{Methods}
We first describe the idea of proof for Theorem \ref{thm intro} in the case $Z=G$ is a semisimple group.
Let $\pi \in \hat G_{\rm d}$ be a discrete series.
Let $\sigma\in \hat M$ and $\lambda\in \af_\C^*$ be such that there is a quotient
$$
\pi_{\lambda, \sigma}= \Ind_P^G ( \lambda\otimes \sigma) \twoheadrightarrow \pi
$$
of the principal series representation $\Ind_P^G ( \lambda\otimes \sigma)$.
Here induction is normalized and from the left. Such a quotient exists for every irreducible
representation $\pi$ by the  subrepresentation theorem of Casselman.

Let now $v \in \pi_{\lambda, \sigma}^\infty $ be a smooth vector and
let $\oline v$ be its image in $\pi^\infty$.
Further let $\oline \eta$ be any smooth vector in $(\pi^{\vee})^\infty$ where $\pi^{\vee}$ is the dual
representation of $\pi$.  We view $\oline \eta$ as an element of
$ (\pi_{\lambda, \sigma}^{\vee})^\infty= \pi_{-\lambda, \sigma^{\vee}}^\infty$,
denote it then by $\eta$, and   record the relation
$$
m_{\oline v, \oline \eta}(g)
:= \la\oline \eta, \pi(g)^{-1}\oline v\ra  = \la \eta, \pi_{\lambda,\sigma}(g^{-1})v\ra=:m_{v,\eta}(g)
\qquad (g\in G) \, .
$$
We now use the non-compact model for
$\pi_{\lambda, \sigma}$, i.e. $\sigma$-valued functions on $\oline N$ (the opposite of $N$), and let
$v$ be a $\sigma$-valued a test function on $\oline N$.
Let $g=a\in A$. As $v$ is compactly supported on $\oline N$, the functions $\oline n \mapsto
a^{-2\rho} v(a\oline n a^{-1}) $ form a Dirac sequence on $\oline N$ for $a\in A^-$
tending to infinity along a regular ray, and a partial Dirac sequence in case of a semi-regular ray.
Here $A^-=\exp(\af^-)$ with $\af^-\subseteq \af$ the closure of the negative Weyl chamber determined by $N$.
Dirac approximation and appropriate choices of
$\oline v$ and $\oline\eta$ then give a constant
$c=c(\oline v, \oline \eta)\neq 0$ and the asymptotic behavior:
\begin{equation} \label{asymp intro}
m_{\oline v  ,  \oline\eta}(a)\sim c\cdot  a^{-\lambda + \rho}
\qquad (a\in A^-, a\to \infty)\, .
\end{equation}
Strictly speaking, the constant $c$ above also depends on the ray
along which we go to infinity, in case it is not regular.
The asymptotics (\ref{asymp intro}) are motivated by a lemma of Langlands
\cite[Lemma 3.12]{Langlands} which is at the core of the Langlands classification.
This lemma asserts for $K$-finite vectors $v$ and $\eta$,
and for $\lambda$ in the range
of absolute convergence of the long intertwining operator, say $I$, that
$$
c(\oline v, \oline \eta)
=   \la I(v)(e), \eta(e)\ra_\sigma\,.
$$
As our $v$ is compactly supported on
$\oline N$ the integral defining $I(v)$ is in fact
absolutely convergent for every parameter $\lambda$.

As $\pi$ belongs to the discrete series, $m_{\oline v, \oline \eta}$ is square integrable on $G$.
One then
derives from (\ref{asymp intro}) and
the integral formula for the Cartan decomposition $G=KA^-K$ that the parameter $ \lambda$ has to satisfy the strict inequality
\begin{equation}\label{parameter intro one}
\re \lambda\big|_{\af^-\bs \{0\}}>0\, .
\end{equation}

There exists a number $N(G)\in\N$ such that every rank one standard intertwiner
$$
I_\alpha:  \Ind_P^G ( s_\alpha \lambda\otimes s_\alpha\sigma)\to \Ind_P^G ( \lambda\otimes \sigma)
$$
is an isomorphism for $\lambda(\alpha^\vee)\not\in\frac{1}{N(G)}\Z$
(see Proposition \ref {Prop intertwining operators} below).
Suppose  that $\lambda(\alpha^\vee)\not\in \frac{1}{N(G)}\Z$ for some simple root
$\alpha\in \Sigma(\nf, \af)$. Then we obtain an additional quotient morphism
$ \pi_{s_\alpha\lambda, s_\alpha\sigma}\twoheadrightarrow \pi$.
As above this implies
\begin{equation} \label{parameter intro}
\re s_\alpha \lambda\big|_{\af^-\bs \{0\}}>0\,.
\end{equation}

Motivated by (\ref{parameter intro}) we define an equivalence relation on $\af_{\C}^{*}$ in Section \ref{subsection equivalence relations} as follows: $\lambda\sim \mu$ provided $\mu$ is obtained from $\lambda$ by a sequence $\lambda=\mu_{0},\mu_{1},\dots,\mu_{l}=\mu$
such that
\begin{enumerate}[(a)]
\item $\mu_{i+1}=s_{i}(\mu_{i})$ for $s_{i}=s_{\alpha_{i}}$ a simple reflection,
\item $\mu_{i}(\alpha_{i}^{\vee})\notin\frac{1}{N(G)}\Z$.
\end{enumerate}
The equivalence class of $\lambda\in\af_{\C}^{*}$ is denoted $[\lambda]$ and (by slight abuse of terminology introduced in
Section \ref{subsection integral-negative}) we say that $\lambda$ is strictly integral-negative provided all elements of $[\lambda]$ satisfy (\ref{parameter intro}).
In particular we see that any parameter $\lambda$, for which there exists a discrete series representation $(\pi,V)$ and a quotient $\pi_{\lambda,\sigma}\twoheadrightarrow V$, is strictly integral-negative.

Using the geometry of the Euclidean apartment of the Weyl group we show in Section \ref{section negative integral} (Corollary \ref{Cor strictly int-neg parameters}) that there exists an $N=N(\gf)\in \N$ such that for strictly integral-negative parameters $\lambda\in\af_{\C}^{*}$ one has
$$
\lambda(\alpha^{\vee})\in\frac{1}{N}\Z
\qquad(\alpha\in\Sigma)\,.
$$
In particular strictly integral-negative parameters are real and discrete.

For a general real spherical space
$Z=G/H$ we start with a twisted discrete series representation $\pi$ and consider it as a quotient
$ \pi_{\lambda, \sigma}= \Ind_P^G ( \lambda\otimes \sigma) \twoheadrightarrow \pi$ of a principal series representation.
The role of $\oline \eta\in (\pi^{\vee})^\infty$ above is now played by an element
$\oline \eta\in (\pi^{-\infty})^H$ where $\pi^{-\infty}$ refers to the dual of $\pi^\infty$.
We let $\eta$ be the lift of $\oline \eta$ to an element of $(\pi_{\lambda, \sigma}^{-\infty})^H$.

The function
$$
m_{\oline v,\oline \eta}(g):=\oline \eta(\pi(g^{-1}) \oline v) = \eta(\pi_{\lambda, \sigma} (g^{-1}) v)=:m_{v,\eta}(g)
$$
descends to a smooth function on $Z=G/H$ and is referred to as a generalized matrix coefficient.

Now  $\eta$ is supported on various $H$-orbits on $P\bs G$  and we pick one with maximal dimension, say $PxH$ for some $x\in G$.
Here one meets the first serious technical obstruction: Unlike in the symmetric case
(Matsuki \cite{Mat}, Rossmann \cite{Ros}), there is no
explicit description of the $P\times H$ double cosets,
but merely the information that
the number of double cosets is finite \cite{KS}.
However, for computational purposes related to asymptotic analysis it turns out that one can replace
the unknown isotropy algebra $\hf_x:=\Ad(x)\hf$ by its deformation
$$
\hf_{x,X}:= \lim_{t\to \infty} e^{t\ad X} \hf_x  \qquad (X\in \af^- \ \text{regular})\, .
$$
There are only finitely many of those for regular $X$ and they are all $\af$-stable, i.e. nicely lined up
for arguments related to Dirac-compression.  One is then interested in the asymptotics of
$ t\mapsto m_{v,\eta}(\exp(tX)x)$ for appropriately compactly supported $v$.  The main technical result
of this paper is a generalization of (\ref{asymp intro}) in terms of natural  geometric data
related to $\hf_{x,X}$, see Theorem \ref{leading behavior} and Corollary \ref{cor asymp}.
As above it leads to a variant of (\ref{parameter intro one}) in Corollary \ref{Cor negative and integral}
and the final conclusion is derived via our Weyl group techniques from Section \ref{section negative integral}.

\medbreak

{\it Acknowledgement: } We would like to thank Patrick Delorme  for  posing the question about the
spectral gap for twisted discrete series representations, and for explaining to us how to adapt
the work of Sakellaridis and Venkatesh for $p$-adic spherical spaces to real spherical spaces.

\section{Notions and Generalities}\label{Notions}
We write $\N=\{1, 2,  3\ldots\}$ for the set of natural numbers and put $\N_0:=\N\cup\{0\}$.
Throughout this paper we use upper case Latin letters $A,B, C\ldots$ to denote Lie groups and write
$\af, {\mathfrak b}, \cf,\ldots$ for their corresponding Lie algebras.
If $A, B\subseteq G$ are Lie groups, then we write  $N_A(B):=\{ a\in A\mid aBa^{-1}=B\}$ for the normalizer of $B$
in $A$ and likewise we denote by $Z_A(B)$ the centralizer of $B$ in $A$. Correspondingly
if $\af, \mathfrak b\subseteq\gf$ are subalgebras, then we  write $N_\af(\mathfrak b)$ for the normalizer of $\mathfrak b$ in $\af$.

For a real vector space $V$ we write $V_{\C}$ for the complexification $V\otimes \C$ of $V$.

If $L$ is a real reductive Lie group, then we denote by $L_{\rm n}$ the normal subgroup generated
by all unipotent elements of $L$, or, phrased equivalently, $L_{\rm n}$ is the connected subgroup with Lie algebra equal to the direct sum of all non-compact simple ideals of $\lf$.

Let $G$ be an open subgroup of the real points $\mathbf{G}(\R)$ of a reductive algebraic group $\mathbf{G}$ defined over $\R$. Let $\mathbf{H}$ be an algebraic subgroup of $\mathbf{G}$ defined over $\R$ and let $H$ be an open subgroup of $\mathbf{H}(\R)\cap G$.
Define the homogeneous space $Z:=G/H$.
We assume that $Z$ is unimodular, i.e., carries a $G$-invariant positive Radon measure.
Let $z_0:=e\cdot H\in Z$ be the standard base point.

Let $P\subseteq G$ be a minimal parabolic subgroup. We assume that $Z$ is {\it real spherical}, that is,
the action of $P$ on $Z$ admits an open orbit.  After replacing $P$ by a
conjugate we will assume that $P\cdot z_0$ is open in $Z$.
The local structure theorem (see \cite{KKS}) asserts the existence  of a parabolic subgroup
$Q\supseteq P$  with Levi-decomposition $Q=L \ltimes U$ such that:
\begin{align}\label{adapted parabolic}
\nonumber P\cdot z_0 &= Q \cdot z_0\,,\\
Q\cap H &= L \cap H,\\
\nonumber L_{\rm n} &\subseteq L \cap H\,.
\end{align}
We emphasize that the choice of $L$ has to be taken in accordance with the local structure theorem, see
\cite[Remark 2.2]{DKKS}.

Let now $L=K_L A  N_L$ be any Iwasawa-decomposition of $L$ and set $A_H:=A \cap H$ and $A_Z:=A/A_H$.
We note that $A_H$ is connected.  The number $\rank_\R Z:=\dim A_Z$ is an invariant of $Z$ and referred to as the
{\it real rank of $Z$}.

We inflate $K_L$ to a maximal compact subgroup $K\subseteq G$ and set $M:=Z_K(\af)$.
We denote by $\theta$ the Cartan involution on $\gf$ defined by $K$ and set $\oline {\uf}:= \theta(\uf)$. We may and will assume that $A\subseteq P$. Let $P=MAN$ be the corresponding Langlands decomposition of $P$ and define $\oline\nf:=\theta(\nf)$.

\subsection{Spherical roots and the compression cone}
Let $\Sigma=\Sigma(\gf,\af)$ be the restricted root system for the pair $(\gf,\af)$ and
$$
\gf = \af \oplus \mf \oplus \bigoplus_{\alpha\in \Sigma} \gf^\alpha
$$
be the attached root space decomposition.
Write $(\lf \cap \hf)^{\perp_\lf} \subseteq \lf$ for the orthogonal complement of $\lf \cap \hf$ in $\lf$
with respect to a non-degenerate $\Ad(G)$-invariant bilinear form  on $\gf$ restricted to $\lf$.
From $\gf=\qf +\hf=\uf \oplus (\lf\cap\hf)^{\perp_\lf}\oplus \hf$ and $\gf=\qf\oplus \oline{\uf}$ we infer the existence of a linear
map $ T:\oline{\uf}\to \uf \oplus (\lf\cap\hf)^{\perp_\lf}$ such that
$\hf=\lf\cap \hf \oplus \G(T)$ with $\G(T) \subseteq \oline{\uf} \oplus \uf \oplus (\lf\cap\hf)^{\perp_\lf}$ the graph of $T$.

Set $\Sigma_\uf:=\Sigma(\uf,\af)\subseteq \Sigma$.
For $\alpha\in\Sigma_{\uf}$ and $\beta\in\Sigma_{\uf}\cup\{0\}$ we denote by $T_{\alpha,\beta}:\gf^{-\alpha}\to\gf^{\beta}$ the map obtained by restriction of $T$ to $\gf^{-\alpha}$ and projection to $\gf^{\beta}$.
Then
$$
T\big|_{\gf^{-\alpha}} = \sum_{\beta\in\Sigma_{\uf}\cup\{0\}} T_{\alpha,\beta}\,.
$$
Let $\M\subseteq \af^*\bs\{0\}$ be the additive semi-group generated by
$$
\{ \alpha+\beta\mid \alpha\in \Sigma_\uf, \beta\in\Sigma_{\uf}\cup\{0\} \text{ such that } T_{\alpha,\beta}\neq 0\}\,.
$$
We recall from \cite{KK},  Cor. 12.5 and Cor. 10.9,  that the cone generated by $\M$ is simplicial. We fix a set of generators $S$
of this cone with the property $\M\subseteq \N_0[S]$ and refer to $S$
as a set of  {\it (real) spherical roots}.
Note that all elements of $\M$ vanish on $\af_H$ so that we can view $\M$ and $S$ as subsets of
$\af_Z^*$.

We define the {\it compression cone}  by
$$
\af_Z^-:=\{ X\in\af_Z\mid (\forall \alpha\in S) \alpha(X)\leq 0\}
$$
and write $\af_{Z,E}:=\af_Z^- \cap (-\af_Z^-)$ for its edge.   We note that
$$
\# S = \dim \af_Z/\af_{Z,E}\, .
$$

For an $\af$-fixed subspace $\sf$ of $\gf$, we define
$$
\rho(\sf)(X):=\frac{1}{2}\tr(\ad(X)\big|_{\sf})\qquad(X\in \af)\,.
$$
We write $\rho_{P}$ for $\rho(\pf)$ and $\rho_{Q}$ for $\rho(\qf)$.
Recall that the unimodularity of $Z$ implies that $\rho_Q|_{\af_H}=0$, see \cite[Lemma 4.2]{KKSS2}.

Let $\Pi\subseteq \Sigma^+$ be the set of simple roots.
We let $\af^\pm:=\{ X \in \af \mid (\forall \alpha\in \Pi) \  \pm\alpha(X)\geq 0\}$ and write
$\af^{--}$ for the interior (Weyl chamber) of $\af^{-}$.

We write $p: \af\to \af_Z$ for the projection and set $\af_E:=p^{-1}(\af_{Z,E})$ and $A_{E}=\exp(\af_{E})$.
Set $\hat H=HA_{E}$ and note that $\hat H$ normalizes $H$.
Obviously $\hat H$ is real spherical as well.
Finally, we define $\hat Z:=G/\hat H$.

\subsection{The normalizer of a real spherical subalgebra}

\begin{lemma}\label{normalizer lemma}
Let $\hf\subseteq \gf$ be a real spherical subalgebra. Then the following assertions hold:
\begin{enumerate}[(i)]
\item\label{normalizer lemma part 2} $N_\gf(\hf)= \hat \hf +\hat \mf$ with $\hat\mf\subseteq \mf$, the sum not necessarily being direct.
\item\label{normalizer lemma part 3} $\hat{\hat\hf}=\hat \hf$.
\item\label{normalizer lemma part 4} $[N_\gf(\hf)]_{\rm n}=\hf_{\rm n}$, i.e. every $\ad_\gf$-nilpotent element in $N_\gf(\hf)$ is contained in $\hf$.
\end{enumerate}
\end{lemma}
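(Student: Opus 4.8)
The plan is to treat (i) first, since it carries the real weight; (ii) and (iii) will then follow with the help of the compression-cone formalism and the structure of $N_G(H)/H$ from~\cite{KKS}. Throughout I write $\hat\hf=\hf+\af_E$ with $\af_E=p^{-1}(\af_{Z,E})$; since $\hat H=HA_E$ normalizes $H$ (as recorded above), $\af_E$ normalizes $\hf$ and $\hat\hf\subseteq N_\gf(\hf)$. Putting $\hat\mf:=\mf\cap N_\gf(\hf)$ gives $\hat\hf+\hat\mf\subseteq N_\gf(\hf)$ for free, so the content of (i) is the reverse inclusion; I would establish $N_\gf(\hf)\subseteq\hf+\lf$ first and then analyse what happens inside the reductive algebra $\lf$.

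By the local structure theorem, $\hf=(\lf\cap\hf)\oplus\G(T)$ with $\G(T)$ the graph of $T:\oline\uf\to\uf\oplus\wf$, $\wf:=(\lf\cap\hf)^{\perp_\lf}$, so that $\gf=\hf\oplus\uf\oplus\wf$. Given $X\in N_\gf(\hf)$, its component along $\uf\oplus\wf$ again normalizes $\hf$ (the $\hf$-component of $X$ contributes only brackets lying in $\hf$), so I may assume $X\in\uf\oplus\wf$ and must show that its $\uf$-component vanishes. For this I would bracket $X$ against the graph vectors $\oline u+T\oline u\in\hf$, with $\oline u$ running over the $\af$-root vectors in $\oline\uf$, read off the top-$\af$-weight part of the result in $\uf$, and conclude by downward induction on weights that the $\uf$-component of $X$ is $0$; this yields $N_\gf(\hf)\subseteq\hf+\wf\subseteq\hf+\lf$. \emph{This weight bookkeeping --- forcing a normalizing element out of the nilradical $\uf$ using only the graph structure of $\hf$ and $\pf+\hf=\gf$, with no explicit description of $\hf$ available --- is the step I expect to be the main obstacle.} Granting it, I would write $X=h+\ell$ with $h\in\hf$, $\ell\in\lf$, so that $\ell=X-h\in\lf\cap N_\gf(\hf)$ --- a subalgebra of $\lf$ containing the ideal $\lf_{\rm n}$ (as $\lf_{\rm n}\subseteq\lf\cap\hf$). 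Such a subalgebra is the sum of $\lf_{\rm n}$ and a subalgebra of the complementary ideal $\mathfrak z(\lf)\oplus(\text{compact ideals of }\lf)$, and this complementary ideal lies in $\af\oplus\mf$: indeed $\af$ acts on $\gf$ by semisimple operators with real eigenvalues and therefore centralizes every compact ideal of $\lf$, while $\mathfrak z(\lf)$ has only a split part in $\af$ and a compact part in $\mf$. Hence $\lf\cap N_\gf(\hf)\subseteq\lf_{\rm n}+\af+\mf$ and $N_\gf(\hf)\subseteq\hf+\af+\mf$. Finally, for $X=h+a+m$ with $a\in\af$, $m\in\mf$, one gets $a=X-h-m\in\af\cap N_\gf(\hf)=\af_E$ --- an element of $\af$ normalizes $\hf$ exactly when every spherical root vanishes on it (compression-cone theory, cf.~\cite{KKS} and \cite{KK}) --- and then $m\in\mf\cap N_\gf(\hf)=\hat\mf$, so $X\in\hat\hf+\hat\mf$; this would complete (i).

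For (ii), by definition $\hat{\hat\hf}=\hat\hf+\af_{\hat E}$, where $\af_{\hat E}$ is the preimage in $\af$ of the edge of the compression cone of $\hat Z=G/\hat H$, so it suffices to show this edge is trivial. I would deduce this from \cite{KKS}: since $\af\cap\hat\hf=\af_E=p^{-1}(\af_{Z,E})$, passing from $Z$ to $\hat Z$ replaces $\af_Z$ by $\af_Z/\af_{Z,E}$, and $\hat H$ absorbs precisely the noncompact part of the automorphism group $N_G(H)/H$, so $N_G(\hat H)/\hat H$ is compact and the compression cone of $\hat Z$ has trivial edge; then $\af_{\hat E}=\af\cap\hat\hf\subseteq\hat\hf$ and $\hat{\hat\hf}=\hat\hf$. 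For (iii), $\hf_{\rm n}\subseteq[N_\gf(\hf)]_{\rm n}$ is clear, so it remains to put every $\ad_\gf$-nilpotent $X\in N_\gf(\hf)$ inside $\hf$. Each $\exp(tX)$ is a unipotent element normalizing $\mathbf H^{\circ}$, hence --- acting trivially on the finite set $\mathbf H/\mathbf H^{\circ}$ --- normalizing $\mathbf H$, so $\exp(tX)\in N_{\mathbf G}(\mathbf H)$. By \cite{KKS} the algebraic group $N_{\mathbf G}(\mathbf H)/\mathbf H$ is reductive with anisotropic semisimple part, and such a group has no nontrivial unipotent element; since the quotient map is algebraic, $\exp(tX)$ maps to the identity, i.e.\ $\exp(tX)\in\mathbf H$ for all $t$, whence $X\in\Lie\mathbf H=\hf$. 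Thus $[N_\gf(\hf)]_{\rm n}\subseteq\hf$, giving $[N_\gf(\hf)]_{\rm n}=\hf_{\rm n}$.
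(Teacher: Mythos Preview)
The paper's own proof is three lines of citation: (i) is quoted from \cite[(5.10)]{KKSS}, (ii) from \cite[Lemma~4.1]{KKS}, and (iii) is declared to follow from (i). So you are supplying far more detail than the paper itself, and there is no meaningful comparison of \emph{techniques} to make---only a check of whether your arguments stand on their own.

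Your treatments of (ii) and (iii) are fine and match how the cited results are used. For (iii) in particular, your passage to the group level---$\exp(tX)$ unipotent in $N_{\mathbf G}(\mathbf H)$, hence trivial in the compact-times-torus quotient $N_{\mathbf G}(\mathbf H)/\mathbf H$---is exactly how one reads (iii) off from (i), since (i) says that $N_\gf(\hf)/\hf$ is a quotient of $\af_E+\hat\mf$ and hence has compact semisimple part.

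For (i), beyond the weight bookkeeping you flag, there is a second, unflagged gap at the very last step. From $X=h+a+m$ with $a\in\af$, $m\in\mf$ you write $a=X-h-m\in\af\cap N_\gf(\hf)$; this presupposes $m\in N_\gf(\hf)$, which you have not shown. You only know $a+m\in N_\gf(\hf)$, and since $\hf$ is in general not $\theta$-stable there is no a~priori reason the $\af$- and $\mf$-components of a normalizing element should \emph{separately} normalize $\hf$. The gap is, however, closable: $N_\gf(\hf)$ is the Lie algebra of the real algebraic group $N_{\mathbf G}(\mathbf H)$ and is therefore stable under the decomposition of a semisimple element into its commuting hyperbolic and elliptic parts. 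For the semisimple element $Y=a+m$ (with $[a,m]=0$, $\ad a$ real, $\ad m$ imaginary) this decomposition is precisely $Y_{\rm hyp}=a$, $Y_{\rm ell}=m$; hence $a\in\af\cap N_\gf(\hf)=\af_E$ and $m\in\mf\cap N_\gf(\hf)=\hat\mf$, and your conclusion $N_\gf(\hf)\subseteq\hat\hf+\hat\mf$ follows. You should insert this step explicitly.
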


\begin{proof}
For (\ref{normalizer lemma part 2}) see \cite[(5.10)]{KKSS}.
Lemma 4.1 in \cite{KKS} implies (\ref{normalizer lemma part 3}). Finally, (\ref{normalizer lemma part 4}) follows from (\ref{normalizer lemma part 2}).
\end{proof}

\section{Twisted discrete series as quotients of principal series}
\subsection{The spherical subrepresentation theorem}
For a Harish-Chandra module $V$, we denote by $V^{\infty}$ the unique smooth moderate growth Fr\'echet globalization and by $V^{-\infty}$ the continuous dual of $V^{\infty}$.
If $\eta\in (V^{-\infty})^{H}\setminus\{0\}$, then the pair $(V,\eta)$ is called a spherical pair.

For a Harish-Chandra module $V$ we denote by $\tilde V$ its contragredient or dual Harish-Chandra module,
that is, $\tilde V$ consist of the $K$-finite vectors in the algebraic dual $V^*$ of $V$.  Further we denote by
$\oline V$ the conjugate Harish-Chandra module, that is, $\oline V=V$ as $\R$-vector space but with the conjugate complex
multiplication.   We recall that $\tilde V = \oline V$ in case $V$ is unitarizable. In particular if $(V,\eta)$ is a
spherical pair with $V$ unitarizable, then so is $(\oline V, \oline \eta)$ with $\oline \eta(v):= \oline{\eta(v)}$.

Associated to $\eta \in (V^{-\infty})^H$  and $v\in V^\infty$ we find the generalized matrix coefficient on $Z$
$$
m_{v,\eta}(z):= \eta (g^{-1}v) \qquad (z=gH\in Z)\,,
$$
which defines a smooth function on $Z$.  If $v\in V$ then $m_{v,\eta}$ admits a convergent
power series expansion (cf. \cite{KKS2}, Sect. 6):
$$
m_{v,\eta}(ma\cdot z_0) =  \sum_{\mu\in \E} \sum_{\alpha \in \N_0[S]}  c_{\mu, v}^\alpha(m;\log a) a^{\mu + \alpha}
\qquad (a\in A_Z^-, m \in M)\, .
$$
Here $\E\subseteq \af_{Z,\C}^*$ is a finite set of leading exponents only depending on $(V,\eta)$; the term "leading" refers
to the following relation:  for all $\mu, \mu'\in \E$, $\mu\neq \mu'$ one has $\mu \not \in \mu' + \N_0[S]$.
Further, for each $\mu\in\E$, $\alpha\in \N_0[S]$ and $v\in V$, the assignment
$$
c_{\mu, v}^\alpha:  M \times \af_Z \to \C, \ \ (m, X) \to c_{\mu,v}^\alpha(m; X)
$$
is polynomial in $X$ and $M$-finite.
Moreover, for each $\mu\in\E$ there exists a $v\in V$ such that $c_{\mu,v}^{0}\neq 0$.
The $M$-types which can occur are those obtained
from branching the $K$-module $\Span_{\C}\{K\cdot v\}$ to $M$.  The degrees of the polynomials
are uniformly bounded and we set $d_\mu:=\max_{v\in V} \deg c_{\mu, v}^0\in \N_0$.

Let us set $A_L:=Z(L)\cap A$.   Then $L =M_L A_L $ for a complementary reductive subgroup $M_L \subseteq L$.
For a unitary representation $(\sigma, V_\sigma)$ of $M_L$ and $\lambda \in \af_{L,\C}^*$ we denote by $\Ind_{\oline Q}^G (\lambda \otimes \sigma)$ the normalized left induced representation.
Note that the elements $v\in\Ind_{\oline Q}^G (\lambda \otimes \sigma)$  are $K$-finite functions $v: G \to V_\sigma$ which satisfy
$$
v(\oline u m a g)  =  a^{\lambda-\rho_Q}  \sigma(m) v(g)
$$
for all $g\in G$, $a\in A_{L}$, $u\in \oline U$ and $m\in M_L$.

Note that $A_{L}A_{H}=A$ and that therefore there exists a natural inclusion $\af_{Z}^{*}\hookrightarrow \af_{L}^{*}$.
The representations $\Ind_{\oline Q}^G (\lambda \otimes \sigma)$ are related to spherical representation theory as follows.

\begin{lemma}  Let $(V,\eta)$ be a spherical pair with $V$ irreducible and
$\mu \in \af_{Z}^{*}\subseteq\af_{L}^{*}$ a leading exponent.  Then there exist
an irreducible finite dimensional representation $\sigma$ of $M_L$ with a $(M_L\cap H)$-fixed vector, and
an embedding of Harish-Chandra modules:
\begin{equation} \label{disc-embedQ}
 V \hookrightarrow \Ind_{\oline Q}^G ((-\mu + \rho_Q)\otimes \sigma)\, .
\end{equation}
\end{lemma}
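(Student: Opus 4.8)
The plan is to realise the embedding (\ref{disc-embedQ}) as a \emph{leading term map}: one reads off the coefficient of $a^{\mu}$ in the power-series expansion of the generalised matrix coefficients $m_{v,\eta}$ along the compression cone and organises these leading coefficients into a single function on $G$ lying in $\Ind_{\oline Q}^G((-\mu+\rho_Q)\otimes\sigma)$ for an appropriate finite-dimensional representation $\sigma$ of $M_L$. Equivalently, one is realising $(-\mu+\rho_Q)\otimes\sigma$ as a subquotient of the $\oline{\uf}$-Jacquet module of $V$, in the spirit of Casselman's subrepresentation theorem.

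First, the map $\Phi\colon V\to C^\infty(Z)$, $v\mapsto m_{v,\eta}$, intertwines $V$ with the left regular representation and is injective, since $m_{v,\eta}\equiv 0$ would force $\eta$ to annihilate the dense span of $\{\pi(g)^{-1}v : g\in G\}$ in $V^\infty$, whence $v=0$. Fixing the leading exponent $\mu$, I would set, for $v\in V$ and $g\in G$,
\[
f_v(g):=\text{ the coefficient of }a^{\mu}\text{ (of top degree }d_\mu\text{ in }\log a)\text{ in the expansion of }m_{v,\eta}(g^{-1}m a\cdot z_0),
\]
where $m\in M$ and $a\in A_Z^-$ tends to infinity along a generic ray. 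By the expansion before the lemma, its extension to $V^\infty$, and its behaviour under left translation (see \cite{KKS2}, Sect.~6), $f_v(g)$ is well defined, independent of the ray, $M$-finite and homogeneous polynomial in $\log a$, with $g\mapsto f_v(g)$ smooth and $K$-finite. Since $m_{\pi(x)v,\eta}(z)=m_{v,\eta}(x^{-1}z)$, one has $f_{\pi(x)v}(g)=f_v(gx)$, so $v\mapsto f_v$ is a morphism of $(\gf,K)$-modules, the target being given the right-translation action.

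To identify the target, recall the defining property of the compression cone: for $a\in A_Z^-$ going to infinity along a generic ray, $\Ad(a)$ contracts $\uf$ and $\Ad(a^{-1})$ contracts $\oline{\uf}$. Hence the $\uf$- and $\oline{\uf}$-directions of the open orbit $Q\cdot z_0\cong U\times L/(L\cap H)$ drop out in the leading term; combining this with the right $H$-invariance of $m_{v,\eta}$ and with $L_{\rm n}\subseteq L\cap H$, the leading coefficients become right-$(M_L\cap H)$-invariant functions on $M_L$ valued in a fixed finite-dimensional space, finite-dimensionality being forced by $M$-finiteness and the degree bound $d_\mu$; passing to the top homogeneous degree in $\log a$ makes $A_L$ act semisimply, by the character $a\mapsto a^{-\mu}$. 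Let $\sigma$ be the resulting finite-dimensional representation of $M_L$ on this module $V_\sigma$; with the $\rho_Q$-shift built into the model of $\Ind_{\oline Q}^G$ this is exactly the equivariance defining a vector of $\Ind_{\oline Q}^G((-\mu+\rho_Q)\otimes\sigma)$, so $v\mapsto f_v$ is a morphism $V\to\Ind_{\oline Q}^G((-\mu+\rho_Q)\otimes\sigma)$. Evaluation at the base point is $(M_L\cap H)$-invariant and yields the $(M_L\cap H)$-fixed vector required by the lemma (in the contragredient of $V_\sigma$, if the conventions so dictate), and after replacing $\sigma$ by a suitable irreducible subquotient — a standard argument preserving the fixed vector and the non-triviality of the map — we may take $\sigma$ irreducible. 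Finally the morphism is non-zero, because the maximal degree $d_\mu$ is attained by some $c_{\mu,v}^{0}$, for which $f_v\neq 0$; since $V$ is irreducible it is therefore injective, which is (\ref{disc-embedQ}).

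I expect the main obstacle to be the identification of the target — upgrading the \emph{pointwise} leading coefficients to a genuine morphism of Harish-Chandra modules with exactly the $\oline Q$-equivariance of $\Ind_{\oline Q}^G((-\mu+\rho_Q)\otimes\sigma)$. This requires the error estimates showing that the $\uf$- and $\oline{\uf}$-contributions are of strictly lower order, smoothness and $K$-finiteness of $g\mapsto f_v(g)$, and the bookkeeping that promotes the leading-coefficient data — a priori functions on $M$ — to a finite-dimensional representation $\sigma$ of $M_L$ with a genuine $(M_L\cap H)$-fixed vector; all of this rests on the asymptotic machinery of \cite{KKS2}, Sect.~6. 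The remaining points (injectivity, the reduction to an irreducible $\sigma$) are then formal.
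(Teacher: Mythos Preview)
Your approach is essentially the paper's: both extract the top-degree part of the leading coefficient $c_{\mu,v}^{0}$ and use it to embed $V$ into the induced representation. The difference is purely in execution. You try to build the section $g\mapsto f_v(g)$ of the induced bundle directly and then verify the $\oline Q$-equivariance from asymptotic error estimates, which is exactly the ``main obstacle'' you flag. The paper sidesteps this entirely: it fixes one multi-index $\mathbf m$ with $|\mathbf m|=d_\mu$ and one $M$-type $\sigma$, obtaining a linear map $V\to V_\sigma$, $v\mapsto c_{\mu,v}^{\mathbf m}[\sigma]$, checks \emph{algebraically} that $(\lf\cap\hf+\oline\uf)V$ lies in the kernel (this is the content hidden in your asymptotic claim that the $\oline\uf$-direction drops out), extends $\sigma$ from $M$ to $M_L$ by triviality on $M_{L,\rm n}$, and then invokes Frobenius reciprocity. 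No global function on $G$, no error estimates, no smoothness/$K$-finiteness issues. One small slip in your sketch: only the $\oline\uf$-direction needs to drop out (that is what induction from $\oline Q$ demands); the $\uf$-direction is irrelevant here.
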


\begin{proof}  This is implicitly contained in \cite{KS}, Section 4. We confine ourselves with a sketch of the argument.

Recall $d_\mu$ and fix a basis $X_1, \ldots, X_n $ of $\af_Z$. For ${\bf m} \in \N_0^n$,
$X=\sum_{j=1}^n x_j X_j\in \af_Z$ we set $X^{\bf m}:= x_1^{m_1}\cdot \ldots\cdot x_n^{m_n}$.  Then
$$
c_{\mu, v}^0(m;X)=\sum_{|{\bf m }|\leq d_\mu}   c_{\mu, v}^{\bf m}(m) X^{\bf m}
\qquad(m\in M)
$$
where $c_{\mu, v}^{\bf m}$ is an $M$-finite function. Fix now $\sigma\in \hat M$ and ${\bf m}\in \N_0^n$ with
$|{\bf m}|=d_\mu$ such that the $\sigma$-isotypical part of  $c_{\mu, v}^{\bf m}(m)$ is non-zero. This
gives rise to a non-trivial $M$-equivariant map
$$
V \to V_\sigma,  \ \ v\mapsto   c_{\mu, v}^{\bf m}[\sigma]\, .
$$
It is easy to see that $(\lf\cap \hf  +\oline \uf)V$ is in the kernel of this map.
Note that $M\cap M_{L,n}$ is a normal subgroup of $M$ that is contained in $M\cap H$. From
the fact that $\sigma$ admits a non-zero $M\cap H$-fixed vector it follows that
$\sigma\big|_{M\cap M_{L,n}}$ is trivial. We may thus extend $\sigma$ to a representation of
$M_L\simeq M\underset{M\cap M_{L,n}}{\rtimes}M_{L,n}$
by setting $\sigma\big|_{M_{L,n}}=1$.
The assertion now follows from Frobenius reciprocity.
\end{proof}

\subsection{Discrete series and twisted discrete series}
For $\chi\in(\hat\hf/\hf)^{*}_{\C}\simeq\af_{Z,E,\C}^*$ we define the space of functions
$$
C_{c}(\hat Z;\chi)
:=\big\{\phi\in C_{c}(G):\phi(\dotvar ha)=a^{-\chi}\phi \text{ for all }a\in A_{E}, h\in H\big\}\,.
    $$
We call $\chi\in(\hat\hf/\hf)^{*}_{\C}$ {\it normalized unitary} if
$$
\re\chi\big|_{\af_{E}}
=-\rho_{Q}\big|_{\af_{E}}\,.
$$
Let $\Delta_{\hat Z}$ be the modular function of $\hat Z$. By \cite[Lemma 8.4]{KKS2} we have
\begin{equation}\label{Expression for Delta_hatZ}
\Delta_{\hat Z}(ha)=a^{-2\rho_{Q}}\qquad(h\in H, a\in A_{E})\,.
\end{equation}
For $g\in G$, let $l_{g}$ denote left multiplication by $g$. Let $\Omega\in\bigwedge^{\dim Z}(\gf/\hf)^{*}\setminus\{0\}$. If $\chi\in(\hat\hf/\hf)^{*}_{\C}$ is normalized unitary, then it follows that for all $\phi,\psi\in C_{c}(\hat Z;\chi)$ the density
$$
|\Omega|_{\phi,\psi}:G\ni g\mapsto \phi(g)\oline{\psi(g)}\big(T_{g}l_{g^{-1}}\big)^{*}|\Omega|
$$
factors to a smooth density on $\hat Z$, and the bilinear form
$$
C_{c}(\hat Z;\chi)\times C_{c}(\hat Z;\chi)\to\C;
\qquad (\phi,\psi)\mapsto \int_{\hat Z} |\Omega|_{\phi,\psi}
$$
is an inner product.
We write $L^{2}(\hat Z;\chi)$ for the Hilbert completion of $C_{c}(\hat Z;\chi)$ with respect to this inner product. Note that the inner product is invariant under the left regular action of $G$ and thus $L^{2}(\hat Z;\chi)$ equipped with the left-regular representation is a unitary representation of $G$.

\begin{definition}
If $\chi\in(\hat\hf/\hf)^{*}_{\C}$ is normalized unitary, then we say that the spherical pair $(V,\eta)$ belongs to the {\it$\chi$-twisted discrete series for $Z$} provided that $V$ is irreducible, $\pi^{\vee}(Y)\eta=-\chi(Y)\eta$ for all $Y\in\hat\hf$, and $m_{v,\eta}\in L^2(\hat Z; \chi)$ for all $v\in V^\infty$. Furthermore, we say that $(V,\eta)$ belongs to the {\it twisted discrete series} for $Z$ if $(V,\eta)$ belongs to the $\chi$-twisted discrete series for some normalized unitary $\chi$. Finally we say that $(V,\eta)$ belongs to the {\it discrete series} for $Z$ provided that $V$ is irreducible and $m_{v,\eta}\in L^2(Z)$ for all $v\in V^\infty$.
\end{definition}

\begin{lemma}\label{DS implies H=hat H}
If there exits a spherical pair $(V,\eta)$ belonging to the discrete series for $Z$, then $H=\hat H= HA_{E}$. Hence $\hat\hf/\hf=0$ and therefore the discrete series for $Z$ coincide with the $0$-twisted discrete series for $Z$.
\end{lemma}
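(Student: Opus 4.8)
The plan is to reduce everything to the single claim that the existence of a discrete-series spherical pair forces $\hat H=H$. Once this is known the remaining assertions are immediate: $\hat H=H$ gives $\hat\hf/\hf=0$; the character $\chi=0$ is normalized unitary because $\rho_Q|_{\af_H}=0$ by unimodularity; and $\hat Z=Z$ with $L^2(\hat Z;0)=L^2(Z)$, so the $0$-twisted discrete series is literally the discrete series. Thus I would argue by contradiction: assume $(V,\eta)$ lies in the discrete series but $\hat H\neq H$. Since $\hat H=HA_E$ with $A_E=\exp(\af_E)$ and $\af_E=p^{-1}(\af_{Z,E})\supseteq\af_H=\ker p$, the hypothesis $\hat H\neq H$ forces $\af_{Z,E}\neq 0$; choose $\tilde X\in\af_E$ with $p(\tilde X)\neq 0$. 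Because $\exp$ is injective on $\af$ and $\tilde X\notin\af_H$, we have $\exp(t\tilde X)\notin H$ for $t\neq 0$, and since $g\exp(t\tilde X)H=gH$ forces $\exp(t\tilde X)\in H$, the right-translation flow $\phi_t\colon gH\mapsto g\exp(t\tilde X)H$ is a \emph{free} smooth $\R$-action on $Z$ commuting with the left $G$-action.

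The heart of the argument is that along each $\phi$-orbit the generalized matrix coefficient $m_{v,\eta}$ is an exponential polynomial in the flow parameter. Indeed, $\af_E$ normalizes $\hf$, hence $\pi^{-\infty}(\exp(t\tilde X))$ preserves $(V^{-\infty})^{H}$, which is finite dimensional by the finite-multiplicity theorem for real spherical spaces; therefore $t\mapsto\eta_t:=\pi^{-\infty}(\exp(t\tilde X))\eta$ is a one-parameter orbit in a finite-dimensional vector space, so it has the form $\sum_j t^{d_j}e^{\lambda_j t}\xi_j$, and for fixed $v\in V^\infty$ and $g\in G$
$$
m_{v,\eta}\big(g\exp(t\tilde X)H\big)=\eta_t\big(\pi(g)^{-1}v\big)
$$
is an exponential polynomial in $t\in\R$.

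Next I would bring in square-integrability. The flow $\phi_t$ commutes with $G$, so $\phi_t^{*}\mu_Z$ is again a $G$-invariant measure and thus equals $e^{ct}\mu_Z$ for a fixed constant $c$. Using a Borel cross section $\Sigma\subseteq Z$ for the free $\R$-action $\phi$, the invariant measure $\mu_Z$ transports to a constant multiple of $e^{ct}\,dt$ along each orbit (up to an $s$-dependent shift of $t$) times a measure $\nu$ on $\Sigma$, so
$$
\|m_{v,\eta}\|_{L^2(Z)}^2=\int_\Sigma\Big(\int_\R\big|m_{v,\eta}(\phi_t s)\big|^2e^{ct}\,dt\Big)\,d\nu(s)<\infty
$$
forces $t\mapsto m_{v,\eta}(\phi_t s)$ to be square integrable against $e^{ct}\,dt$ for $\nu$-a.e.\ $s$. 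But a nonzero exponential polynomial is never square integrable against $e^{ct}\,dt$ over all of $\R$: as $t\to\pm\infty$ its modulus squared times $e^{ct}$ grows like $t^{2d}e^{(2\re\lambda+c)t}$ with $\lambda$ the extreme exponent (largest, resp.\ smallest, real part), and since $\re\lambda_{\min}\le\re\lambda_{\max}$ the integrability conditions at $+\infty$ and $-\infty$ cannot both hold. Hence $m_{v,\eta}(\phi_t s)=0$ for all $t$ and $\nu$-a.e.\ $s$, so $m_{v,\eta}$ vanishes on a conull, hence dense, subset of $Z$; by continuity $m_{v,\eta}\equiv 0$, and since this holds for all $v\in V^\infty$ and $\eta(v)=m_{v,\eta}(z_0)$ we obtain $\eta=0$, contradicting that $(V,\eta)$ is a spherical pair.

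I expect the point needing the most care to be the measure decomposition in the third paragraph: producing a Borel cross section for the \emph{a priori non-proper} free $\R$-action $\phi$ and checking that the fibre measures are proportional to $e^{ct}\,dt$. This can be circumvented altogether by quoting the general fact (already invoked in the introduction) that $L^2(Z)$ carries no discrete spectrum unless $N_G(H)/H$ is compact: since $\hat H/H$ is a subgroup of $N_G(H)/H$ isomorphic to the vector group $\af_{Z,E}$, compactness of $N_G(H)/H$ forces $\af_{Z,E}=0$, i.e.\ $\hat H=H$.
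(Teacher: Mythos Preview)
Your argument is correct and shares the paper's core idea---use the commuting right action of $A_E$ on $Z$ together with the finite dimensionality of $(V^{-\infty})^H$---but the execution differs. The paper first reduces to an $A_E$-eigenvector $\eta$, so that $m_{v,\eta}(gha)=a^{-\chi}m_{v,\eta}(g)$ is purely exponential rather than an exponential polynomial, and then bounds $\|m_{v,\eta}\|_{L^2(Z)}^2$ from below by the integral over a single open $Q$-orbit $gQ\cdot z_0$; the local structure theorem gives an explicit product parametrization $U\times M\times A_0\times A_E/(A\cap H)$ of that orbit, the $a_E$-dependence cancels, and the inner integral reduces to $\vol\big(A_E/(A\cap H)\big)$ times a positive quantity, forcing $A_E=A\cap H$. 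Your route trades this explicit open-orbit computation for an abstract disintegration along the flow, and is thereby independent of the local structure theorem. The concern you raise about that step is in fact unfounded: since $H$ is closed and normal in $\hat H=HA_E$, the projection $Z=G/H\to G/\hat H=\hat Z$ is a principal bundle with structure group $\hat H/H\cong A_E/A_H$, a vector group; your one-parameter subgroup $t\mapsto\exp(t\tilde X)A_H$ is closed in this vector group and therefore also acts properly on $Z$, so local sections exist and $\mu_Z$ disintegrates with fibre measure a constant multiple of $e^{ct}\,dt$. Your fallback to the sentence in the introduction is less satisfactory, since the present lemma is precisely where that assertion is made rigorous for the split part of $N_G(H)/H$.
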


\begin{proof}
Let $(V,\eta)$ be a spherical pair belonging to the discrete series for $Z$.
The right-action of $A_{E}$ commutes with the left-action of $G$ on $L^{2}(Z)$, and thus induces a natural action of $A_{E}$ on $(V^{-\infty})^{H}$. By \cite{KoOs} and \cite{KS2} the space $(V^{-\infty})^{H}$ is finite dimensional.
We may therefore assume that $\eta$ is a joint-eigenvector for the right-action of $A_{E}$, i.e., the generalized matrix coefficients of $V$ satisfy
$$
m_{v,\eta}(gha)=a^{-\chi}m_{v,\eta}(g)\qquad(g\in G, h\in H, a\in A_{E})
$$
for some normalized unitary $\chi\in(\hat\hf/\hf)^{*}_{\C}$. Let $A_{0}$ be a subgroup of $A$ such that $A_{0}\times A_{E}\simeq A$.
If $g\in G$ and $m_{v,\eta}(g\cdot z_{0})\neq 0$, then, if the Haar measures are properly normalized,
\begin{align*}
\int_{Z}|m_{v,\eta}(z)|^{2}\,dz
&\geq \int_{gQ\cdot z_{0}}|m_{v,\eta}(z)|^{2}\,dz\\
&=\int_{U}\int_{M}\int_{A_{0}}\int_{A_{E}/(A\cap H)}(a_{0}a_{E})^{-2\rho_{Q}}|m_{v,\eta}(gnma_{0}a_{E})|^{2}\,da_{E}\,da_{0}\,dm\,dn\\
&= \int_{U}\int_{M}\int_{A_{0}}\int_{A_{E}/(A\cap H)}a_{0}^{-2\rho_{Q}}|m_{v,\eta}(gnma_{0})|^{2}\,da_{E}\,da_{0}\,dm\,dn\,.
\end{align*}
Clearly the last repeated integral can only be absolutely convergent if $A_{E}/(A\cap H)$ has finite volume, or equivalently if $A_{E}=A\cap H$.
\end{proof}

We recall from Section 8 in \cite{KKS2} that  $(V,\eta)$ belongs to the twisted
discrete series for $Z$ only if the conditions
\begin{align}
\label{DS1} &(\re \mu - \rho_Q)|_{\af_Z^- \bs \af_{Z,E}}<0\,,\\
\label{DS2} &(\re \mu-\rho_Q)|_{\af_{Z,E}}=0
\end{align}
hold for all leading exponents $\mu$. Moreover,
\begin{equation}\label{DS3}
 \mu|_{\af_{Z,E}}=-\chi
\end{equation}
when $(V,\eta)$ belongs to the $\chi$-twisted discrete series.
Note that (\ref{DS1}) implies (\ref{DS2}) unless $\af_{Z,E}=\af_Z$.

\subsection{Quotient morphisms}
It is technically easier to work with representations induced from the minimal
parabolic $P$.  Set $\rho_P^Q=\rho_P -\rho_Q$ and observe that there is a natural
inclusion
$$
\Ind_{\oline Q}^G (\lambda \otimes \sigma)\to \Ind_{\oline P}^G( (\lambda +\rho_P^Q)\otimes \sigma|_M)\, .
$$
In particular (\ref{disc-embedQ}) yields
\begin{equation} \label{disc-embedP}
V \hookrightarrow \Ind_{\oline P}^G ((-\mu + \rho_P)\otimes \sigma)\,,
\end{equation}
where we allowed ourselves to write $\sigma$ for $\sigma|_M$.

For general $\Ind_{\oline P}^G (\lambda\otimes \sigma)$ we record that
its dual representation is given by $\Ind_{\oline P}^G (-\lambda\otimes \sigma^{\vee})$.
The natural pairing between these
two representations is given as follows in the non-compact picture:
$$
v^{\vee}(v)=\int_N v^{\vee}(n) \big(v(n)\big)\, dn
$$
for $v^{\vee} \in \Ind_{\oline P}^G (-\lambda\otimes \sigma^{\vee})$ and $v\in \Ind_{\oline P}^G (\lambda\otimes \sigma)$.

Let now $(V,\eta)$ be an irreducible spherical pair belonging to the twisted discrete series.  Then $\oline \mu$ is a
leading exponent for the dual pair $(\oline V, \oline\eta)$.
By applying (\ref{disc-embedP}) to $\oline V$ we embed
$$
\oline V \hookrightarrow   \Ind_{\oline P}^G ((-\oline \mu +\rho_P)\otimes \sigma^{\vee})\,.
$$
Dualizing this inclusion we obtain the quotient morphism
\begin{equation}\label{quotient morphism}
\Ind_{\oline P}^G ((\oline \mu  -\rho_P)\otimes \sigma) \twoheadrightarrow V\,.
\end{equation}

In view of  the $P\times H$-geometry of $G$ it is a bit inconvenient to work with representations
induced from the left by the opposite parabolic $\oline P$.  We can correct this by employing the long Weyl
group element $w_0\in W=W(\gf,\af)$, which maps $\oline P$ to $P$.
This gives us for every $\lambda\in \af^*_\C$ and $\sigma\in\hat M$ an isomorphism
\begin{equation}\label{w0 intertwining}
\Ind_P^{G} (\lambda\otimes \sigma) \to \Ind_{\oline P}^G (w_0\lambda\otimes w_0\sigma);
\ \ v\mapsto v(w_0\dotvar)\,,
\end{equation}
where $w_0\sigma:=\sigma\circ w_0\in\hat M$. With proper choices of
$\lambda$ and $\sigma$ we obtain from
(\ref{w0 intertwining}) and (\ref{quotient morphism})
a quotient morphism of $\Ind_P^G (\lambda\otimes\sigma)$ onto $V$.

We write now $\pi_{\lambda,\sigma}$ for $\Ind_P^G(\lambda\otimes\sigma)$ and record that functions
$v\in \pi_{\lambda,\sigma}$ feature the transformation property
\begin{equation} \label{P-trans}
v (man g)  = a^{\lambda+\rho_P} \sigma(m) v(g)\, .
\end{equation}

To summarize our discussion so far:

\begin{lemma}\label{disc cover}  Let $(V,\eta)$ be a twisted discrete series representation for $Z$ and $\mu\in\af_\C^*$ a leading exponent. Then there exists a $\sigma\in \hat M$ and a surjective quotient morphism $\pi_{\lambda,\sigma} \twoheadrightarrow V$
with $\lambda= w_0 \oline \mu +\rho_P$.
\end{lemma}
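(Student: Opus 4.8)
The plan is to assemble the quotient morphism by tracking a single leading exponent $\mu$ through the chain of maps already established in this subsection, and then correcting the parabolic from $\oline P$ to $P$ via the long Weyl element. First I would recall that, by hypothesis, $(V,\eta)$ belongs to the $\chi$-twisted discrete series for some normalized unitary $\chi$, and that by (\ref{DS1})--(\ref{DS2}) the given leading exponent $\mu$ of $(V,\eta)$ lies in $\af_{Z}^{*}\subseteq\af_{L}^{*}$, so that the previous lemma (the embedding (\ref{disc-embedQ})) applies. The point is that $\mu$ is also a leading exponent for the conjugate pair $(\oline V,\oline\eta)$: this follows because the generalized matrix coefficients of $(\oline V,\oline\eta)$ are the complex conjugates of those of $(V,\eta)$, and conjugation does not move the (real) exponents $\mu$ — here one uses that $\mu\in\af_{Z}^{*}$ is real, which is exactly what (\ref{DS2}) together with normalized unitarity of $\chi$ and (\ref{DS3}) guarantees on $\af_{Z,E}$ and (\ref{DS1}) on the complement.

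Next I would run the chain of inclusions and their duals. Applying (\ref{disc-embedP}) to $\oline V$ with the exponent $\mu$ gives the embedding
$$
\oline V \hookrightarrow \Ind_{\oline P}^G\big((-\mu+\rho_P)\otimes \sigma^{\vee}\big)
$$
for a suitable $\sigma\in\hat M$ (coming from the finite-dimensional $M_L$-representation with an $(M_L\cap H)$-fixed vector, restricted to $M$). Dualizing, and using that the dual of $\Ind_{\oline P}^G(\nu\otimes\tau)$ is $\Ind_{\oline P}^G(-\nu\otimes\tau^{\vee})$ together with the fact that $V$ is irreducible (so that the dual of the inclusion $\oline V\hookrightarrow\,\cdot\,$ is a genuine surjection onto $V$, as $\widetilde{\oline V}=V$ for unitarizable $V$), yields the quotient morphism (\ref{quotient morphism})
$$
\Ind_{\oline P}^G\big((\mu-\rho_P)\otimes\sigma\big)\twoheadrightarrow V\,.
$$
Here I am writing $\mu$ in place of $\oline\mu$ precisely because $\mu$ is real; if one prefers to be scrupulous one simply keeps $\oline\mu$ throughout and substitutes $\oline\mu=\mu$ only at the very end.

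Finally I would transport this to a quotient of a representation induced from $P$ rather than $\oline P$. The isomorphism (\ref{w0 intertwining}), $v\mapsto v(w_0\,\cdot\,)$, identifies $\Ind_P^{G}(\lambda\otimes\sigma')$ with $\Ind_{\oline P}^G(w_0\lambda\otimes w_0\sigma')$; choosing $\sigma'$ so that $w_0\sigma'=\sigma$ (i.e. $\sigma'=\sigma\circ w_0$, which is again in $\hat M$) and $w_0\lambda=\mu-\rho_P$, i.e. $\lambda=w_0^{-1}(\mu-\rho_P)=w_0^{-1}\mu-w_0^{-1}\rho_P$, and using $w_0\rho_P=-\rho_P$ so $w_0^{-1}\rho_P=-\rho_P$, one gets $\lambda=w_0^{-1}\mu+\rho_P$. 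Since $w_0^{-1}=w_0$ in $W$, this is $\lambda=w_0\mu+\rho_P=w_0\oline\mu+\rho_P$, matching the statement. Composing the isomorphism (\ref{w0 intertwining}) with the surjection (\ref{quotient morphism}) produces the desired $\pi_{\lambda,\sigma}\twoheadrightarrow V$, and we rename the inducing $M$-representation $\sigma\circ w_0$ back to $\sigma$ as the paper does. I expect the only genuinely delicate point to be the bookkeeping at two places: verifying that $\mu$ (not just some Weyl translate) is a leading exponent for the conjugate pair $(\oline V,\oline\eta)$, which rests on the reality of $\mu$ coming from (\ref{DS1})--(\ref{DS3}); and tracking the $\rho_P$-shifts and the action of $w_0$ on $\rho_P$ and on $\sigma$ correctly so that the final parameter is exactly $w_0\oline\mu+\rho_P$ — everything else is a formal concatenation of maps already recorded above.
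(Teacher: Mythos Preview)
Your overall strategy matches the paper's exactly: pass to the conjugate pair, apply the embedding (\ref{disc-embedP}), dualize, and then transport from $\oline P$ to $P$ via $w_0$. The $w_0$ bookkeeping at the end is fine.

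There is, however, a genuine error in the first step. You assert that $\mu$ is real and hence that $\mu$ itself is a leading exponent for $(\oline V,\oline\eta)$. This is not what (\ref{DS1})--(\ref{DS3}) say. Condition (\ref{DS1}) is an inequality on $\re\mu$ only, and (\ref{DS3}) gives $\mu|_{\af_{Z,E}}=-\chi$; normalized unitarity fixes $\re\chi$ but leaves $\im\chi$ free, so $\mu$ may well have a nonzero imaginary part. (This is exactly the $i\nu$ term in the expansion (\ref{mu expansion}); that $\nu=0$ for \emph{untwisted} discrete series is one of the main results of the paper, Theorem \ref{main thm}(\ref{Main thm item 2}), and is certainly not available here.) The correct statement --- and the one the paper uses --- is that conjugating matrix coefficients sends $a^{\mu}$ to $a^{\oline\mu}$, so $\oline\mu$ is a leading exponent of $(\oline V,\oline\eta)$. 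Running your argument with $\oline\mu$ in place of $\mu$ gives the quotient $\Ind_{\oline P}^G((\oline\mu-\rho_P)\otimes\sigma)\twoheadrightarrow V$ and then $\lambda=w_0\oline\mu+\rho_P$ directly, with no need for the spurious identification $\oline\mu=\mu$ at the end. Your own hedge (``keep $\oline\mu$ throughout'') is the right repair; just drop the final substitution.
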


We write $\pi_{\lambda,\sigma}^{\infty}$ for the smooth Fr\'echet globalization of moderate growth.
In the sequel we will model $\pi_{\lambda, \sigma}^{\infty}$ on all smooth functions which satisfy (\ref{P-trans}).

\section{Generalized volume growth}
\subsection{Limiting subalgebras}
Define {\it order-regular} elements in $\af^{--}$ by
$$
\af_{\rm o-reg}^{--}:=\{ X\in \af^{--}\mid \alpha(X)\neq \beta(X),  \alpha,\beta\in \Sigma,\alpha\neq \beta\}\, .
$$

In this and the next section we will make heavy use of certain limits of subspaces of $\gf$ in the Grassmannian. In the following lemma we collect the important properties of such limits.

\begin{lemma}\label{Lemma Limits of subspaces}
Let $E$ be a subspace of $\gf$ and let $X\in\af$. Then the limit
$$
E_{X}:=\lim_{t\to\infty}\Ad\big(\exp(tX)\big)E\,,
$$
exists in the Grassmannian. If $\lambda_{1}<\lambda_{2}<\dots<\lambda_{n}$ are the eigenvalues and $p_{1},\dots,p_{n}$ the corresponding projections onto the eigenspaces $V_{i}$ of $\ad(X)$, then $E_{X}$ is given by
\begin{equation}\label{eq formula for E_X}
E_{X}
=\bigoplus_{i=1}^{n}p_{i}\big(E\cap \bigoplus_{j=1}^{i}V_{j}\big).
\end{equation}
The following hold.
\begin{enumerate}[(i)]
\item\label{Lemma Limits of subspaces - item 1} If $E$ is a Lie subalgebra of $\gf$, then $E_{X}$ is a Lie subalgebra of $\gf$.
\item\label{Lemma Limits of subspaces - item 2} If $X\in\af^{--}$, then $\big(\Ad(man)E\big)_{X}=\Ad(ma)\big(E_{X}\big)$ for all $m\in M$, $a\in A$ and $n\in N$. Moreover, if $X$ is order-regular, then $E_{X}$ is $A$-stable.
\item\label{Lemma Limits of subspaces - item 3} Let $\Cc$ be a connected component of $\af_{\rm o-reg}^{--}$. Then $\big(E_{X}\big)_{Y}=E_{Y}$ for all $X\in\overline{\Cc}$ and $Y\in\Cc$. In particular, if $X,Y\in\Cc$, then $E_{X}=E_{Y}$.
\item\label{Lemma Limits of subspaces - item 4}  If $X,X'\in\af^{--}$, then $\af\cap E_{X}=\af\cap E_{X'}$.
\end{enumerate}
\end{lemma}

\begin{proof}
Let $k=\dim(E)$ and let $\iota:\Gr(\gf,k)\hookrightarrow \P(\bigwedge^{k}\gf)$ be the Pl{\"u}cker embedding, i.e., $\iota$ is the map given by
\begin{equation}\label{eq Plucker embedding}
\iota\big(\mathrm{span}(v_{1},\dots,v_{k})\big)
=\R (v_{1}\wedge\cdots \wedge v_{k}).
\end{equation}
The map $\iota$ is a diffeomorphism onto a compact submanifold of $\P(\bigwedge^{k}\gf)$.
The map $\ad(X)$ acts diagonalizably on $\bigwedge^{k}\gf$, say with eigenvalues $\mu_{1}<\mu_{2}<\dots<\mu_{m}$.
Let $\xi\in\bigwedge^{k}\gf\setminus\{0\}$ be so that $\iota(E)=\R\xi$. We decompose $\xi$ into eigenvectors for $\ad(X)$ as
$$
\xi
=\sum_{i=1}^{m}\xi_{i}\,,
$$
where $\xi_{i}$ is an eigenvector of $\ad(X)$ with eigenvalue $\mu_{i}$. Now
$$
\Ad\big(\exp(tX)\big)(\R\xi)
=\R\big(\sum_{i=1}^{m}e^{t\mu_{i}}\xi_{i}\big)\,.
$$
Let $1\leq k\leq m$ be the largest number so that $\xi_{k}\neq 0$. Then $\Ad\big(\exp(tX)\big)(\R\xi)$ converges for $t\to\infty$ to $\R\xi_{k}$. Let $E_{X}=\iota^{-1}(\R\xi_{k})$. Since $\iota$ is a diffeomorphism, $\Ad\big(\exp(tX)\big)E$ converges to $E_{X}$ for $t\to\infty$.

We move on to prove (\ref{eq formula for E_X}).
For $1\leq i\leq n$ we define
$$
E_{i}
:=E\cap \bigoplus_{j=1}^{i}V_{j}\,.
$$
We will prove with induction that for every $1\leq i\leq n$
\begin{equation}\label{eq formula for E_X - induction hypothesis}
\big(E_{i}\big)_{X}
=\bigoplus_{j=1}^{i}p_{j}\big(E_{j}\big)\,.
\end{equation}
Clearly $E_{1}=E\cap V_{1}$ is stable under the adjoint action of $X$, and hence $(E_{1})_{X}=E_{1}$. This proves (\ref{eq formula for E_X - induction hypothesis}) for $i=1$. Assume that $(\ref{eq formula for E_X - induction hypothesis})$ holds for some $i$. We claim that
\begin{equation}\label{eq sum p_jE_j subseteq (E_i+1)_X}
\bigoplus_{j=1}^{i+1}p_{j}\big(E_{j}\big)
\subseteq \big(E_{i+1}\big)_{X}\,.
\end{equation}
In view of the induction hypothesis it suffices to prove that $p_{i+1}(Y)\in \big(E_{i+1}\big)_{X}$ for every $Y\in E_{i+1}\setminus E_{i}$.
We decompose $Y$ as
$$
Y
=\sum_{j=1}^{i+1}p_{j}(Y)\,.
$$
Then $p_{i+1}(Y)\neq 0$ and thus
$$
(\R Y)_{X}
=\lim_{t\to\infty}\R\big(\sum_{j=1}^{i+1}e^{t\lambda_{i}}p_{j}(Y)\big)
=\R p_{i+1}(Y)\,.
$$
This shows that $p_{i+1}(Y)\in (E_{i+1})_{X}$. Therefore, the inclusion (\ref{eq sum p_jE_j subseteq (E_i+1)_X}) holds.
In fact, equality holds because the dimensions agree.
This proves (\ref{eq formula for E_X}).

Observe that $[E,E]\subseteq E$ is a closed condition in the Grassmannian. Therefore, the set of Lie subalgebras in the Grassmannian is a closed set. It follows that $E_{X}$ is a Lie subalgebra if $E$ is a Lie subalgebra. This proves (\ref{Lemma Limits of subspaces - item 1}).

Assume that $X\in\af^{--}$.
If $n\in N$, then $\exp(tX)n\exp(tX)^{-1}$ converges to $e$ for $t\to\infty$. Now
\begin{align*}
\big(\Ad(man)E\big)_{X}
&=\lim_{t\to\infty}\Ad(ma)\Ad\big(\exp(tX)n\exp(tX)^{-1}\big)\Ad\big(\exp(tX)\big)E\\
&=\Ad(ma)\big(E_{X}\big).
\end{align*}
If $X\in\af_{\rm{o-reg}}^{--}$, then the eigenvalues $\{\alpha(X):\alpha\in\Sigma\cup\{0\}\}$ of $\ad(X)$ are in bijection with $\Sigma\cup\{0\}$. Therefore, all projections $p_{i}$ in (\ref{eq formula for E_X}) are projections onto $\af$-eigenspaces, namely the root spaces and $\mf\oplus\af$. This implies that $E_{X}$ is $A$-stable. This proves (\ref{Lemma Limits of subspaces - item 2}).

We move on to prove (\ref{Lemma Limits of subspaces - item 3}). It follows from (\ref{eq formula for E_X}) that for every $X\in\af^{-}$ the limit $E_{X}$ is spanned by the limits $L_{X}$ of the lines $L$ in $E$. Hence we may assume that $E$ is $1$-dimensional. Let $X\in\overline{\Cc}$ and $Y\in\Cc$. For $\alpha\in\Sigma\cup\{0\}$ we define $p_{\alpha}$ to be the projection $\gf\to\gf_{\alpha}$ along the root space decomposition.
Let $\alpha_{0}\in\Sigma\cup\{0\}$ be so that $\alpha_{0}(Y)$ is maximal among the numbers $\alpha(Y)$ with $\alpha\in\Sigma\cup\{0\}$ for which $p_{\alpha}(E)\neq\{0\}$. By (\ref{eq formula for E_X}) we have $E_{Y}=p_{\alpha_{0}}(E)$.
Since $Y\in\Cc$ and $X\in\overline{\Cc}$ we have $\alpha(X)\geq\beta(X)$ if $\alpha(Y)>\beta(Y)$. In particular the largest eigenvalue of $\ad(X)$ that appears in $E$ is equal to $\alpha_{0}(X)$. The projection onto the eigenspace of $\ad(X)$ with eigenvalue $\alpha_{0}(X)$ is given by
$$
\sum_{\substack{\alpha\in\Sigma\cup\{0\}\\\alpha(X)=\alpha_{0}(X)}}p_{\alpha}\,.
$$
Therefore,
$$
E_{X}
=\Big(\sum_{\substack{\alpha\in\Sigma\cup\{0\}\\\alpha(X)=\alpha_{0}(X)}}p_{\alpha}\Big)(E)\,,
$$
and hence
$$
(E_{X})_{Y}
=p_{\alpha_{0}}\bigg(\Big(\sum_{\substack{\alpha\in\Sigma\cup\{0\}\\\alpha(X)=\alpha_{0}(X)}}p_{\alpha}\Big)(E)\bigg)
=p_{\alpha_{0}}(E)
=E_{Y}\,.
$$
If $X,Y\in\Cc$, then by (\ref{Lemma Limits of subspaces - item 2}) the space $E_{X}$ is $\af$-stable and therefore $(E_{X})_{Y}=E_{X}$.
This proves (\ref{Lemma Limits of subspaces - item 3}).

Finally we prove (\ref{Lemma Limits of subspaces - item 4}). Let $X\in\af^{--}$. Let $p_{\mf}$, $p_{\af}$ be the projections $\gf\to\mf$ and $\gf\to\af$, respectively, along the Bruhat decomposition. Since $X$ is regular, it follows from (\ref{eq formula for E_X}) that
$$
(\mf\oplus\af)\cap E_{X}
=(p_{\mf}+p_{\af})\big((\mf\oplus\af\oplus\nf)\cap E\big)\,.
$$
Clearly $p_{\af}\big((\af\oplus\nf)\cap E\big)\subseteq \af\cap E_{X}$. Moreover, if $Y\in \af\cap E_{X}$ and $Y'\in (\mf\oplus\af\oplus\nf)\cap E$ is so that $(p_{\mf}+p_{\af})(Y')=Y$, then $p_{\mf}(Y')=0$. Hence $Y\in p_{\af}\big((\af\oplus\nf)\cap E\big)$. It follows that
$$
p_{\af}\big((\af\oplus\nf)\cap E\big)
=\af\cap E_{X}\,.
$$
The left-hand side is independent of $X$.
\end{proof}

Let $\Cc$ be a connected component of $\af_{\rm o-reg}^{--}$.
If $X\in\Cc$, then in view of (\ref{Lemma Limits of subspaces - item 3}) in Lemma \ref{Lemma Limits of subspaces} the space $E_{X}$ does not depend on the specific choice of $X$. Therefore for every subspace $E$ of $\gf$ we may define
$$
E_{\Cc}
:=E_{X}
\qquad(X\in\Cc)\,.
$$
Let $x\in G$.
We define the following spaces.
First set
$$
\hf_{\Cc,x}
:=\big(\Ad(x)\hf\big)_{\Cc}\,.
$$
Observe that by (\ref{Lemma Limits of subspaces - item 2}) in Lemma \ref{Lemma Limits of subspaces}
\begin{equation}\label{hf_C,x invariance}
\hf_{\Cc,manxh}=\Ad(m)\hf_{\Cc,x}\qquad\big(m\in M, a\in A, n\in N, h\in H)\,.
\end{equation}
We define
$$
\af_{x}:=\hf_{\Cc,x}\cap\af\,.
$$
In view of Lemma \ref{Lemma Limits of subspaces}(\ref{Lemma Limits of subspaces - item 4}) this space does not depend on $\Cc$.
Note that (\ref{hf_C,x invariance}) implies that $\af_{x}$ only depends on the double coset $PxH\in P\bs G/H$, not on the representative $x\in G$ for that coset.
We further define the $\af$-stable subalgebras
$$
\oline\nf_{\Cc,x}:=\hf_{\Cc,x}\cap \oline\nf,
\qquad \uf_{\Cc,x}:=\hf_{\Cc,x}\cap \nf\,.
$$
Since $\hf_{\Cc,x}$ is $\af$-stable, it follows that
\begin{equation}\label{eq hf_C,x decomposition}
\hf_{\Cc,x}=\oline\nf_{\Cc,x}\oplus \big((\mf\oplus\af)\cap\hf_{\Cc,x}\big)\oplus\uf_{\Cc,x}\,.
\end{equation}
Finally we choose $\oline\nf_{\Cc}^{x}$ and $\uf_{\Cc}^{x}$ to be $\af$-stable complementary subspaces to $\oline\nf_{\Cc,x}$ in $\oline\nf$ and $\uf_{\Cc,x}$ in $\nf$, respectively, so that
\begin{equation}\label{eq oline nf and nf decomp}
\oline\nf=\oline\nf_{\Cc,x}\oplus\oline\nf_{\Cc}^{x},\qquad
\nf=\uf_{\Cc,x}\oplus\uf_{\Cc}^{x}\,.
\end{equation}

\begin{lemma}\label{Lemma Ad(x)hf+pf+nf^x=gf}
For every $x\in G$
$$
\gf
=(\Ad(x)\hf+\pf)\oplus\oline\nf_{\Cc}^{x}\,.
$$
\end{lemma}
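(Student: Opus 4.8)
The plan is to strip off $\pf$ and reduce to a transversality statement inside $\oline\nf$. The root space decomposition gives $\gf=\pf\oplus\oline\nf$, and since $\pf\subseteq\Ad(x)\hf+\pf$, the intermediate subspace splits as $\Ad(x)\hf+\pf=\pf\oplus\big((\Ad(x)\hf+\pf)\cap\oline\nf\big)=\pf\oplus\oline\nf_x'$, the last equality being just the definition $\oline\nf_x'=\oline\nf\cap(\pf+\Ad(x)\hf)$. Hence the asserted identity $\gf=(\Ad(x)\hf+\pf)\oplus\oline\nf_{\Cc}^{x}$ is equivalent to $\gf=\pf\oplus\oline\nf_x'\oplus\oline\nf_{\Cc}^{x}$, i.e.\ to $\oline\nf=\oline\nf_x'\oplus\oline\nf_{\Cc}^{x}$. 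Fixing $Y\in\Cc$ and noting that $\oline\nf_{\Cc,x}=\lim_{t\to\infty}e^{t\ad(Y)}\oline\nf_x'$ is a limit in the Grassmannian, hence of the same dimension as $\oline\nf_x'$, while $\oline\nf_{\Cc}^{x}$ was chosen to be a complement to $\oline\nf_{\Cc,x}$ in $\oline\nf$, we get $\dim\oline\nf_x'+\dim\oline\nf_{\Cc}^{x}=\dim\oline\nf$. So everything comes down to showing $\oline\nf_x'\cap\oline\nf_{\Cc}^{x}=0$.

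This last transversality is the only real point, and it is where the degeneration $\hf_{\Cc,x}$ is used. The key is that $\oline\nf_{\Cc}^{x}$ is $\af$-stable, hence fixed by $e^{t\ad(Y)}$ for every $t\in\R$. Consequently the set $\mathcal{U}:=\{V\in\Gr_{d}(\oline\nf)\mid V\cap\oline\nf_{\Cc}^{x}=0\}$, with $d:=\dim\oline\nf_x'$, is open in $\Gr_{d}(\oline\nf)$ and invariant under $V\mapsto e^{t\ad(Y)}V$; its complement is therefore closed and invariant. Since $\oline\nf_{\Cc}^{x}$ is a complement to $\oline\nf_{\Cc,x}$, we have $\oline\nf_{\Cc,x}\in\mathcal{U}$, and as $\oline\nf_{\Cc,x}=\lim_{t\to\infty}e^{t\ad(Y)}\oline\nf_x'$, the orbit $\{e^{t\ad(Y)}\oline\nf_x'\}_{t\in\R}$ cannot be contained in the closed complement of $\mathcal{U}$. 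Thus $e^{t_{0}\ad(Y)}\oline\nf_x'\in\mathcal{U}$ for some $t_{0}$, and by invariance of $\mathcal{U}$ also $\oline\nf_x'\in\mathcal{U}$, i.e.\ $\oline\nf_x'\cap\oline\nf_{\Cc}^{x}=0$. Together with the first paragraph this proves the lemma.

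Alternatively the transversality can be checked by hand: ordering $\Sigma^{+}=\{\alpha_{1},\dots,\alpha_{n}\}$ with $\alpha_{1}(Y)<\dots<\alpha_{n}(Y)$ (possible since $Y$ is order-regular) one has the $\af$-stable flag $F^{(k)}:=\bigoplus_{j\geq k}\gf^{-\alpha_{j}}$ of $\oline\nf$, and the limit formula from the proof of Lemma~\ref{E_C properties} identifies $\oline\nf_{\Cc,x}$ with the associated graded $\bigoplus_{k}p_{-\alpha_{k}}(\oline\nf_x'\cap F^{(k)})$ of $\oline\nf_x'$ for this flag; since $\oline\nf_{\Cc}^{x}=\bigoplus_{k}(\oline\nf_{\Cc}^{x}\cap\gf^{-\alpha_{k}})$ meets each graded piece trivially, a minimal-index leading-term argument yields $\oline\nf_x'\cap\oline\nf_{\Cc}^{x}=0$. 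Either way, the heart of the matter is the comparison of $\oline\nf_x'$ with its limit $\oline\nf_{\Cc,x}$; the remaining manipulations with $\gf=\pf\oplus\oline\nf$ are routine bookkeeping.
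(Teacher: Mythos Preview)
Your proof is correct and follows essentially the same strategy as the paper's. Both arguments rest on the observation that the desired condition (spanning in the paper, transversality in your version) is open in the Grassmannian, is invariant under the $A$-action because $\pf$ and $\oline\nf_{\Cc}^{x}$ are $\af$-stable, and is satisfied at the limit $\hf_{\Cc,x}$; hence it holds for $\Ad(x)\hf$ itself. The only difference is that you first strip off $\pf$ and work inside $\oline\nf$ with $\oline\nf_x'$, whereas the paper argues directly at the level of $\gf$; this is a cosmetic variant, not a different idea.
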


\begin{proof}
Let $X\in\Cc$.
In view of (\ref{eq oline nf and nf decomp}) and (\ref{eq hf_C,x decomposition}) we have
\begin{equation}\label{eq gf=hf_C+pf+nf^x}
\gf
=\oline\nf_{\Cc,x}\oplus \pf\oplus\oline\nf_{\Cc}^{x}
=(\hf_{\Cc,x}+\pf)\oplus\oline\nf_{\Cc}^{x}\,.
\end{equation}
If $\gf\neq(\Ad(x)\hf+\pf)+\oline\nf_{\Cc}^{x}$, then also
$$
\gf
\neq\Ad(a)\big(\Ad(x)\hf+\pf+\oline\nf_{\Cc}^{x}\big)
=(\Ad(ax)\hf+\pf)+\oline\nf_{\Cc}^{x}
$$
for every $a\in A$. This would imply that the limit of $(\Ad(\exp(tX)x)\hf+\pf)+\oline\nf_{\Cc}^{x}$ for $t\to\infty$ is a proper subspace of $\gf$. This in turn would contradict (\ref{eq gf=hf_C+pf+nf^x}).
Therefore, $\gf=(\Ad(x)\hf+\pf)+\oline\nf_{\Cc}^{x}$.
Moreover, it follows from (\ref{eq formula for E_X}) that $\pf\cap\hf_{\Cc,x}=\pf\cap\Ad(x)\hf$, and hence
$$
\dim(\hf_{\Cc,x}+\pf)
=\dim(\Ad(x)\hf+\pf)\,.
$$
Therefore, by comparing with (\ref{eq gf=hf_C+pf+nf^x}) we see that the sum $(\Ad(x)\hf+\pf)+\oline\nf_{\Cc}^{x}$ is direct.
\end{proof}

\subsection{Volume-weights}
We recall the volume-weight function on $Z$
$$
\v(z):=\vol_Z (Bz)\qquad (z\in Z)\,,
$$
where $B$ is some compact neighborhood of $e$ in $G$.
We refer to Appendix A for the properties of volume-weights.
The volume weight naturally shows up in the treatment of twisted discrete series representations.

The following proposition is a direct corollary of the invariant Sobolev lemma in Appendix A.

\begin{prop}
Let $(V,\eta)$ be a spherical pair corresponding to a twisted discrete series representation.
Then
\begin{equation} \label{DS-sup}
\sup_{z\in Z} |m_{v,\eta}(z)|  \v(z)^{\frac{1}{2}}
<\infty\,.
\end{equation}
Moreover, if $(z_n)_{n\in \N}$ is a sequence in $Z$ such that its image in $\hat Z$ tends to infinity, then
\begin{equation} \label{limit bound}
\lim_{n\to \infty}  |m_{v,\eta}(z_n)|  \v(z_n)^{\frac{1}{2}} =0\,.
\end{equation}
\end{prop}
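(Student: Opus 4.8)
The plan is to read off both assertions from the invariant Sobolev lemma of Appendix~A, once one observes that every derivative of a generalized matrix coefficient $m_{v,\eta}$ is again a generalized matrix coefficient of the same shape (with the \emph{same} functional $\eta$), so that all such derivatives still lie in $L^2(\hat Z;\chi)$.

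First I would record the differential identity. The map $V^\infty\to C^\infty(Z)$, $v\mapsto m_{v,\eta}$, is $G$-equivariant for the left regular representation on $C^\infty(Z)$, hence commutes with the differential operators coming from the derived $G$-action: for every $u\in\U(\gf)$ one has $D_u m_{v,\eta}=m_{u\cdot v,\eta}$, where $D_u$ is the differential operator on $Z$ obtained from $u$ through the derived left regular representation and $u\cdot v\in V^\infty$ is the image of $v$. Since $V^\infty$ is stable under $\U(\gf)$ and $(V,\eta)$ belongs to the $\chi$-twisted discrete series, $m_{w,\eta}\in L^2(\hat Z;\chi)$ for every $w\in V^\infty$; in particular $D_u m_{v,\eta}\in L^2(\hat Z;\chi)$ for all $u$.

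Next I would invoke the invariant Sobolev lemma: it yields $k\in\N$, finitely many $u_1,\dots,u_r\in\U(\gf)$ of order $\le k$, and a constant $C>0$ with
$$
|f(z)|^2\,\v(z)\ \le\ C\sum_{j=1}^r\int_{Bz}|D_{u_j}f|^2
\qquad(z\in Z)
$$
for every $f\in C^\infty(Z)$. Applying this to $f=m_{v,\eta}$ and substituting $D_{u_j}m_{v,\eta}=m_{u_j\cdot v,\eta}$ gives
$$
|m_{v,\eta}(z)|^2\,\v(z)\ \le\ C\sum_{j=1}^r\int_{Bz}|m_{u_j\cdot v,\eta}|^2
\qquad(z\in Z).
$$
Since $B$ is a fixed compact set, each $Bz$ has uniformly bounded displacement in the $A_E$-direction; combining the $A_E$-equivariance of $m_{u_j\cdot v,\eta}$ with the normalization $\re\chi|_{\af_E}=-\rho_Q|_{\af_E}$ then bounds $\int_{Bz}|m_{u_j\cdot v,\eta}|^2$ by a constant multiple of $\|m_{u_j\cdot v,\eta}\|_{L^2(\hat Z;\chi)}^2$, uniformly in $z$. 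Summing over $j$ gives (\ref{DS-sup}). For (\ref{limit bound}) one notes that if the image of $z_n$ in $\hat Z$ leaves every compact subset of $\hat Z$, then so does the image of $Bz_n$ (because $B$ is fixed and compact); as each $|m_{u_j\cdot v,\eta}|^2$ defines a finite measure on $\hat Z$, its mass over $Bz_n$ tends to $0$, whence $|m_{v,\eta}(z_n)|^2\v(z_n)\to 0$.

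The only step requiring genuine care is the bookkeeping in the last two displays, relating the weight $\v$ and the invariant measure on $Z$ to square-integrability on $\hat Z$ against $\chi$: one must check that a fixed compact neighbourhood $B$ indeed produces only bounded displacement along $A_E$ and that the modular and character factors accumulated along the $A_E$-fibres stay bounded over $B$. Once this is in place — together with the precise statement of the Sobolev lemma from Appendix~A — both assertions are immediate, which is why the proposition is a direct corollary; I do not expect any deeper obstacle.
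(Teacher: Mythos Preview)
Your proposal is correct and follows the same approach as the paper: apply the invariant Sobolev lemma from Appendix~A together with the observation that $D_u m_{v,\eta}=m_{u\cdot v,\eta}$, so that all derivatives remain in $L^2(\hat Z;\chi)$; then the local Sobolev norms over $B\hat z$ are uniformly bounded and tend to zero as $\hat z\to\infty$. The only difference is cosmetic: the paper's Lemma~A.1 is already formulated with the Sobolev norm $\|\Omega_f\|_{B\hat z,2;k}$ taken on $\hat Z$ rather than on $Z$, so the bookkeeping you flag (passing from $\int_{Bz}$ on $Z$ to an integral over $B\hat z$ on $\hat Z$ via the $A_E$-equivariance and the modular factor) is absorbed into the statement of the lemma itself and need not be carried out separately.
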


The basic asymptotic behavior of $\v$ on the compression cone is
\begin{equation} \label{reg growth}
\v(a\cdot z_0)\asymp a^{-2\rho_Q} \quad (a \in A_Z^-)\, .
\end{equation}
See \cite[Proposition 4.3]{KKSS2}.
We investigate now the growth of $\v$ with the base point $z_0$ shifted by an element $x\in G$, i.e., we investigate how  $\v(ax\cdot z_{0})$  grows for $a\in A^-$.

Recall the parabolic subgroup $Q=LU$ from (\ref{adapted parabolic}). For $x=e$, we have $\hf_{\Cc,e}=(\lf\cap \hf)\oplus\oline\uf$, and thus
$$
\rho(\hf_{\Cc,e})=-\rho_{Q}\,.
$$
Hence the following proposition is a partial generalization of the lower bound in (\ref{reg growth}) for shifted base points.

\begin{prop}\label{vol growth}  Let $x\in G$ and $X\in \af^{-}$. Let $\Cc$ be a connected component of $\af_{\rm o-reg}^{--}$ such that $X\in\oline\Cc$.  Then there exists a $C>0$ such that
$$
\v(\exp(tX) x\cdot z_0)\geq C  e^{2t\rho(\hf_{\Cc,x})(X)}\qquad (t\geq 0)\,.
$$
\end{prop}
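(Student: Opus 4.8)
The plan is to reduce the lower bound on $\v$ to a Haar-measure estimate for a conjugate of $H$, by disintegrating Haar measure along a suitable orbit map, and then to obtain that estimate from a linear computation with the flow $e^{t\ad X}$.

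Set $g=g_t:=\exp(tX)x$ and consider the orbit map $q_g\colon G\to Z$, $u\mapsto ug\cdot z_0$; it is a submersion whose fibres are the right cosets of $H_g:=gHg^{-1}$, and $q_g(B)=Bg\cdot z_0$, so that $\v(\exp(tX)x\cdot z_0)=\vol_Z(q_g(B))$. Since $G$ is unimodular and $Z$ carries an invariant measure, $H$ is unimodular, and Weil's integration formula combined with right invariance of the Haar measure $du$ on $G$ yields the disintegration
$$
\int_G f(u)\,du=\int_Z\Big(\int_H f(ughg^{-1})\,dh\Big)\,dz\qquad(z=q_g(u)),
$$
in which the inner integral depends on $u$ only through $q_g(u)$. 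Taking $f=\mathbf{1}_B$ and noting that the resulting integrand is supported in $q_g(B)$, one obtains
$$
\vol_G(B)\le\Big(\sup_{v\in G}\mu_g(H_g\cap vB)\Big)\cdot\v(\exp(tX)x\cdot z_0),
$$
where $\mu_g$ is the measure on $H_g$ obtained by pushing $dh$ forward along $h\mapsto ghg^{-1}$. Covering $B$ by finitely many left translates of a fixed small ball $W$ around $e$ and using left invariance of $\mu_g$ bounds this supremum by $N\,\mu_g(H_g\cap W)$ with $N$ depending only on $B$, so it remains to show
$$
\mu_g(H_g\cap W)\le C'\,e^{-2t\rho(\hf_{\Cc,x})(X)}\qquad(t\ge0).
$$

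At the identity, $\mu_g$ corresponds to the volume form $\omega_g:=(\Ad(g)|_{\hf})_*\omega_0=(e^{t\ad X}|_{\hf_x})_*\omega_1$ on $\hf_g:=\Ad(g)\hf=e^{t\ad X}\hf_x$, where $\omega_0$ is the volume form on $\hf$ attached to $dh$ and $\omega_1:=(\Ad(x)|_{\hf})_*\omega_0$ a fixed volume form on $\hf_x$. Since $W$ is small, $H_g\cap W$ is a union of at most $|\pi_0(H)|$ pieces of the form $h_i\exp(\hf_g\cap\mathcal D_i)$ with $\mathcal D_i$ a fixed ball in $\gf$, and the Jacobian of $\exp|_{\hf_g}$ on each $\mathcal D_i$ is bounded above uniformly in $t$, being evaluated on the fixed compact set $\mathcal D_i$. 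Hence $\mu_g(H_g\cap W)\le C\,\vol_{\omega_g}(\hf_g\cap\mathcal D)$ for a fixed ball $\mathcal D\subseteq\gf$, and the change of variables $e^{t\ad X}\colon\hf_x\to\hf_g$ turns the right-hand side into $\vol_{\omega_1}(\hf_x\cap e^{-t\ad X}\mathcal D)$.

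To bound this last quantity I would fix a basis $v_1,\dots,v_d$ of $\hf_x$ (with $d=\dim\hf$) adapted to the decreasing filtration of $\gf$ by sums of $\ad X$-eigenspaces, so that $v_j=\oline{v}_j+(\text{terms of strictly smaller }\ad X\text{-eigenvalue})$ with $\oline{v}_j$ in the $\ad X$-eigenspace for the eigenvalue $c_j$ and $\{\oline{v}_j\}_{j=1}^{d}$ a basis of $\hf_{x,X}:=\lim_{t\to\infty}e^{t\ad X}\hf_x$. Then $e^{t\ad X}v_j=e^{tc_j}(\oline{v}_j+o(1))$ as $t\to\infty$, so the condition $\sum_j s_j e^{t\ad X}v_j\in\mathcal D$ forces, by linear independence of the $\oline{v}_j$ once the $o(1)$ is absorbed for large $t$, the bound $|s_j|\le C_2 e^{-tc_j}$ for every $j$; integration over this box gives $\vol_{\omega_1}(\hf_x\cap e^{-t\ad X}\mathcal D)\le C_3\,e^{-t\sum_j c_j}=C_3\,e^{-t\,\tr(\ad X|_{\hf_{x,X}})}$ for $t$ large (and the estimate is trivial on any compact range of $t$). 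Finally, since $X\in\oline\Cc$, Lemma \ref{E_C properties}(\ref{E_C properties item 2}) identifies $(\hf_{x,X})_\Cc$ with $\hf_{\Cc,x}$; as $\hf_{x,X}$ is $\ad X$-stable and the operation $E\mapsto E_\Cc$ preserves the dimension of the intersection of $E$ with each $\ad X$-eigenspace, it follows that $\tr(\ad X|_{\hf_{x,X}})=\tr(\ad X|_{(\hf_{x,X})_\Cc})=\tr(\ad X|_{\hf_{\Cc,x}})=2\rho(\hf_{\Cc,x})(X)$, which yields the asserted bound and hence the proposition. The step I expect to demand the most care is the uniform-in-$t$ comparison of $\mu_g(H_g\cap W)$ with the linear volume $\vol_{\omega_g}(\hf_g\cap\mathcal D)$ — keeping the exponential-chart Jacobians and the relevant components of $H_g$ under control as $t\to\infty$; the disintegration identity, the linear estimate, and the trace identity are by comparison routine.
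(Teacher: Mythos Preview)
Your approach differs from the paper's in a genuinely interesting way. The paper parametrizes a neighborhood of $a_tx\cdot z_0$ in $Z$ by a map $\Psi_t:\Xi\times xH_Bx^{-1}\to G$ whose domain is \emph{fixed and compact} (with $\Xi$ a slice built from complements to $\hf_{\Cc,x}$ in $\oline\nf,\nf,\af,\mf$), controls the fiber count of $\Psi_t$ by an algebraic--geometric finite--fibers argument, and then shows that $e^{-2t\rho_{X,x}}\Psi_t^*\omega_G$ converges uniformly on compacta to a nowhere--vanishing form. You instead dualize via Weil's formula, reducing to an \emph{upper} bound on the fiber measure $\mu_g(H_g\cap W)$. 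Both routes hit the same trace identity $\tr(\ad X|_{\hf_{x,X}})=2\rho(\hf_{\Cc,x})(X)$, and your derivation of it via Lemma~\ref{E_C properties}(\ref{E_C properties item 2}) is correct.

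The gap is exactly where you flag it, and it is more serious than a matter of care. Your claim that $H_g\cap W$ is a union of at most $|\pi_0(H)|$ pieces of the form $h_i\exp(\hf_g\cap\mathcal D_i)$ confuses the component count of $H_g$ with that of $H_g\cap W$. As $t\to\infty$ the subalgebra $\hf_g=e^{t\ad X}\hf_x$ moves, and whenever $\hf_x$ contains an element $Y=Y_0+Y_1$ with $Y_0$ its leading (elliptic) part in $\mf$, the one--parameter subgroup $s\mapsto\exp(s\,\Ad(a_t)Y)$ returns near $e$ roughly periodically while its subleading part shrinks, so $H_g^0\cap W$ can have a number of components growing with $t$. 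Your linear volume $\vol_{\omega_g}(\hf_g\cap\mathcal D)$ only sees the component through $e$, hence would undercount $\mu_g(H_g\cap W)$. Equivalently, $\mu_g(H_g\cap W)=\vol_H(H\cap g^{-1}Wg)$ and $g^{-1}Wg$ is an unboundedly elongated neighborhood, so a fixed exponential chart on $H$ cannot capture the intersection.

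A clean way to rescue your strategy is to avoid $\exp$ altogether: write $\mu_g(H_g\cap W)=J_t^{-1}\,\vol_{\mathrm{Riem}}(H_g\cap W)$, where $J_t=\lVert\Ad(a_tx)e_1\wedge\cdots\wedge\Ad(a_tx)e_d\rVert$ is the (constant--in--$h$) Jacobian of $h\mapsto g_thg_t^{-1}$ in left--invariant frames. Your linear computation is then precisely the lower bound $J_t\ge c\,e^{2t\rho(\hf_{\Cc,x})(X)}$, while the remaining bound $\vol_{\mathrm{Riem}}(H_g\cap W)\le C$ follows because $H_g=g_tHg_t^{-1}$ is an algebraic subvariety of $G$ of degree equal to that of $H$, and algebraic varieties of bounded degree have uniformly bounded Riemannian volume in a fixed ball. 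This is the analogue, on the fiber side, of the finite--fibers input the paper uses on the slice side; without some such appeal to algebraicity your exponential--chart argument does not close.
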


\begin{proof}
Set $\oline N^x =\exp(\oline \nf_{\Cc}^x)$. Since $\exp:\oline\nf\to\oline N$ is a polynomial isomorphism the group $\oline N^x$ is an affine subvariety of $\oline N$.
Define an affine subvariety of $N$ by $U^x:=\exp(\uf_{\Cc}^x)$. Let $\af^{x}$ be the orthogonal complement of
$\af_{x}$ in $\af$ and set $A^{x}:=\exp(\af^{x})$.
Further let $X_1, \ldots, X_k$ be a basis of a subspace in $\mf$ which is complementary to $p_{\mf}(\hf_{\Cc,x})$ in $\mf$, where $p_{\mf}$ is the projection $\gf\to\mf$ along the Bruhat decomposition. We may assume in addition that the $X_{j}$ are so that $M_j:=\exp(\R X_j)\simeq \R/\Z$. Now
$$
\af\oplus\mf
= \big((\mf\oplus\af)\cap \hf_{\Cc,x}\big)\oplus\af^{x} \oplus \bigoplus_{j=1}^k \R X_j\,.
$$
Further, we define the affine variety $\Mcal:=M_1 \times \ldots \times M_k $. For $m=(m_1, \ldots, m_k)\in \Mcal$ we set $\phi(m):=m_1\cdot\ldots\cdot m_k\in M$.

For $t\in\R$ define $a_{t}:=\exp(tX)$ and consider the algebraic map
$$
\Phi_t:\,\, U^x \times \oline N^x\times A^x  \times \Mcal\times H \to G;\qquad
(u,\oline n, a, m,h)\mapsto    u\oline n a\phi(m) a_txh\, .
$$
We have
$$
\gf
=\uf_{\Cc}^x\oplus\oline \nf_{\Cc}^x \oplus \af^x \oplus \bigoplus_{j=1}^k\R X_j \oplus \hf_{\Cc,x}\,.
$$
Note that if $\Ad(ax)\hf$ would not be transversal to $\uf_{\Cc}^x\oplus \oline \nf_{\Cc}^x \oplus\af^x \oplus \bigoplus_{j=1}^k\R X_j$ for some $a\in A$, then it would not be transversal for any $a\in A$ since the space $\uf_{\Cc}^x\oplus \oline \nf_{\Cc}^x \oplus\af^x \oplus \bigoplus_{j=1}^k\R X_j$ is $A$-invariant. This would contradict the fact that $\hf_{\Cc,x}$ is transversal to $\uf_{\Cc}^x\oplus \oline \nf_{\Cc}^x \oplus\af^x \oplus \bigoplus_{j=1}^k\R X_j$.
We thus conclude that for every $a\in A$
$$
\gf = \uf_{\Cc}^x\oplus \oline \nf_{\Cc}^x \oplus\af^x \oplus \bigoplus_{j=1}^k\R X_j \oplus \Ad(ax)\hf\,.
$$
In particular this holds for $a=a_{t}$.
This implies for generic $t$, and hence in particular for $t\gg0$, that the map $\Phi_t$ is dominant and as such has generically finite fibers, with a fiber bound independent of $t$. See \cite[Prop. 15.5.1(i)]{Groth}.

Let $U^x_{B}$, $\oline N^x_{B}$, $A^x_{B}$, $\Mcal_{B}$ and $H_{B}$ be relatively compact,
open neighborhoods of $e$ in $U^{x}$, $\oline N^{x}$, $A^{x}$, $\phi(\Mcal)$ and $H$ respectively. We choose these sets small enough so that $U^x_{B}\oline N^x_{B}A^x_{B}\Mcal_{B}\subseteq B$.
Then
\begin{equation}\label{eq v ineq}
\v(a_tx\cdot z_0)
\geq \int_{Z}\1_{U^x_{B}\oline N^x_{B}A^x_{B}\Mcal_{B}a_{t}x\cdot z_{0}}(z)\,dz\,.
\end{equation}
For $y\in G$, let $F_{y}$ be the projection onto $H$ of $\Phi_{t}^{-1}(\{y\})$. If $yh\in U^{x}_{B}\oline{N}^{x}_{B}A^{x}_{B}\Mcal_{B}a_{t}xH_{B}$ then $y\in U^{x}_{B}\oline{N}^{x}_{B}A^{x}_{B}\Mcal_{B}a_{t}xH_{B}h^{-1}$. Hence $H_{B}h^{-1}$ contains an element from $F_{y}$ and $h$ belongs to $(F_{y})^{-1}H_{B}$. Therefore,
\begin{align*}
\int_{H}\1_{U^x_{B}\oline N^x_{B}A^x_{B}\Mcal_{B}a_{t}xH_{B}}(yh)\,dh
&\leq\int_{H} \1_{(F_{y})^{-1} H_{B}} (h) dh\,\1_{U^x_{B}\oline N^x_{B}A^x_{B}\Mcal_{B}a_{t}x\cdot z_{0}}(y\cdot z_{0})\\
&\leq\#\Phi_{t}^{-1}(\{y\})\vol_{H}(H_{B})\1_{U^x_{B}\oline N^x_{B}A^x_{B}\Mcal_{B}a_{t}x\cdot z_{0}}(y\cdot z_{0})\,.
    \end{align*}
Let $c=\big(n\vol_{H}(H_{B})\big)^{-1}$, where $n$ is the generic fiber bound. Then for generic $y\in G$ we have
$$
\1_{U^x_{B}\oline N^x_{B}A^x_{B}\Mcal_{B}a_{t}x\cdot z_{0}}(y\cdot z_{0})
\geq c\int_{H}\1_{U^x_{B}\oline N^x_{B}A^x_{B}\Mcal_{B}a_{t}xH_{B}}(yh)\,dh.
$$
By inserting this inequality into (\ref{eq v ineq}) we obtain
\begin{align*}
\v(a_tx\cdot z_0)
&\geq\int_{Z}c\int_{H}\1_{U^x_{B}\oline N^x_{B}A^x_{B}\Mcal_{B}a_{t}xH_{B}}(yh)\,dh\,dyH\\
&=c\int_{G}\1_{U^x_{B}\oline N^x_{B}A^x_{B}\Mcal_{B}a_{t}xH_{B}}(y)\,dy\\
&=c\int_{G}\1_{U^x_{B}\oline N^x_{B}A^x_{B}\Mcal_{B}a_{t}xH_{B}x^{-1}a_{-t}}(y)\,dy\,.
\end{align*}
For the last equality we used the invariance of the Haar measure on $G$.

We define $\Xi:=U^{x}_{B}\oline N^{x}_{B}A^{x}_{B}\Mcal_{B}$ and set
$$
\Psi_t:\,\,\Xi\times xH_{B}x^{-1} \to G;
\qquad(\xi, y)\mapsto \xi a_{t} ya_{-t}\, .
$$
The fibers of $\Psi_{t}$ are bounded by the fibers of $\Phi_{t}$, and hence are generically finite with fiber bound independent of $t$ for $t\gg0$.
Let $\omega_{G}$ be the section of $\bigwedge^{\dim G}T^{*}G$ corresponding to the Haar measure on $G$.
Then
$$
\v(a_tx\cdot z_0)
\geq\frac{c}{k} \int_{\Xi}\int_{xH_{B}x^{-1}}\Psi_{t}^{*}\omega_{G}\,,
$$
where $k$ is the fiber bound of $\Psi_{t}$.

We finish the proof by estimating $\Psi_{t}^{*}\omega_{G}$.
For $g\in G$, let $l_{g}:G\to G$ and $r_{g}:G\to G$ be left and right-multiplication by $g$, respectively. Let
$$
\xi\in\Xi,\quad
y\in xH_{B}x^{-1},\quad
Y_{1}\in T_{\xi}\Xi \quad\text{and}\quad
Y_{2}\in T_{y}(xHx^{-1})\,.
$$
Let $\gamma:\R\to\xi^{-1}\Xi$ and $\delta:\R\to xH_{B}x^{-1}y^{-1}$ be smooth paths so that
$$
\gamma(0)=\delta(0)=e,
\quad\gamma'(0)=(T_{e}l_{\xi})^{-1}Y_{1}
\quad\text{and}\quad\delta'(0)=T_{y}r_{y^{-1}}Y_{2}\,.
$$
Then
$$
\frac{d}{ds}\gamma(s)a_{t}\delta(s)a_{-t}\big|_{s=0}
=\gamma'(0)+\Ad(a_{t})\delta'(0)
=(T_{e}l_{\xi})^{-1}Y_{1}+\Ad(a_{t})\Big(T_{y}r_{y^{-1}}Y_{2}\Big).
$$
Now $\xi\gamma$ is a smooth path in $\Xi$ with $(\xi\gamma)(0)=\xi$ and $(\xi\gamma)'(0)=Y_{1}$. Likewise, $\delta y$ is a smooth path in $x H_{B}x^{-1}$ satisfying $(\delta y)(0)=y$ and $(\delta y)'(0)=Y_{2}$.

The tangent map of $\Psi_{t}$ is determined by the following identity of elements in $T_{\xi}G$
\begin{align}
\nonumber&T_{(\xi,y)}\big(r_{a_{t}y^{-1}a_{-t}}\circ\Psi_{t}\big)\big(Y_{1},Y_{2}\big)
=\frac{d}{ds}\,\Psi_{t}\Big(\xi\gamma(s),\delta(s)y\Big)a_{t}y^{-1}a_{-t}\,\big|_{s=0}\\
\nonumber&\qquad=\frac{d}{ds}\,\xi\gamma(s)a_{t}\delta(s)a_{-t}\,\big|_{s=0}
=T_{e}l_{\xi}\Big(\frac{d}{ds}\gamma(s)a_{t}\delta(s)a_{-t}\,\big|_{s=0}\Big)\\
\label{eq Tangent map of transl of Psi}&\qquad=Y_{1}+T_{e}l_{\xi}\Ad(a_{t})\Big(T_{y}r_{y^{-1}}Y_{2}\Big)\,.
\end{align}

We write $\hf_{X,x}$ for the limit for $t\to\infty$ of $\Ad(a_{t})\Ad(x)\hf$ in the Grassmannian.
Let $Y$ be a non-zero eigenvector of $\ad(X)$ in $\hf_{X,x}$ and let $\alpha\in\Sigma\cup\{0\}$ be such that $\alpha(X)$ is the eigenvalue.
It follows from (\ref{eq formula for E_X}) with $E=\Ad(x)\hf$, that there exists an element
$$
Y'\in\big(Y+\sum_{\substack{\beta\in\Sigma\cup \{0\}\\\beta(X)<\alpha(X)}}\gf^{\beta}\big)\cap\Ad(x)\hf.
$$
Let $\tilde Y$ be a right-invariant vector field on $xHx^{-1}$ such that $\tilde Y(e)=Y'$.
Then
\begin{align*}
\lim_{t\to\infty}e^{-t\alpha(X)}T_{(\xi,y)}\big(r_{a_{t}y^{-1}a_{-t}}\circ\Psi_{t}\big)\big(0,\tilde Y(y)\big)
&=\lim_{t\to\infty}e^{-t\alpha(X)}\Big(T_{e}l_{\xi}\circ \Ad(a_{t})\Big)\big(T_{y}r_{y^{-1}}\tilde Y(y)\big)\\
&=\lim_{t\to\infty}e^{-t\alpha(X)}\Big(T_{e}l_{\xi}\circ \Ad(a_{t})\Big)(Y').
\end{align*}
For $\beta\in\Sigma\cup\{0\}$ with $\beta(X)<\alpha(X)$, let $Y'_{\beta}\in\gf^{\beta}$ be so that
$$
Y'=Y+\sum_{\substack{\beta\in\Sigma\cup\{0\}\\\beta(X)<\alpha(X)}}Y'_{\beta}.
$$
Then
$$
e^{-t\alpha(X)}\Ad(a_{t})Y'
=Y+\sum_{\substack{\beta\in\Sigma\cup\{0\}\\\beta(X)<\alpha(X)}}a_{t}^{\beta-\alpha}Y'_{\beta}.
$$
Therefore,
\begin{align*}
&\lim_{t\to\infty}e^{-t\alpha(X)}T_{(\xi,y)}\big(r_{a_{t}y^{-1}a_{-t}}\circ\Psi_{t}\big)\big(0,\tilde Y(y)\big)\\
&\qquad=T_{e}l_{\xi}Y+\lim_{t\to\infty}\sum_{\substack{\beta\in\Sigma\cup\{0\}\\\beta(X)<\alpha(X)}}a_{t}^{\beta-\alpha}T_{e}l_{\xi}Y'_{\beta}
=T_{e}l_{\xi}Y\,.
\end{align*}
The convergence is uniform in $y$ and uniform on compact sets in $\xi$.
Combining this with (\ref{eq Tangent map of transl of Psi}) yields that for every $Y_{1}\in T_{\xi}\Xi$ and $Y$ as before we have
$$
\lim_{t\to\infty}e^{-t\alpha(X)}T_{(\xi,y)}\big(r_{a_{t}y^{-1}a_{-t}}\circ\Psi_{t}\big)\big(Y_{1},\tilde Y(y)\big)
=Y_{1}+T_{e}l_{\xi}Y\,,
$$
where again the convergence is uniform in $y$ and uniform on compact sets in $\xi$.
Define $\rho_{X,x}:=\frac{1}{2}\tr\big(\ad(X)\big|_{\hf_{X,x}}\big)$.
It follows that
$$
e^{-2t\rho_{X,x}}\Psi_{t}^{*}\omega_{G}
=e^{-2t\rho_{X,x}}\big(r_{a_{t}y^{-1}a_{-t}}\circ\Psi_{t}\big)^{*}\omega_{G}
$$
converges for $t\to\infty$ to a nowhere vanishing continuous section of $\bigwedge^{\dim G}T^{*}\big(\Xi\times xH_{B}x^{-1}\big)$.
The proposition now follows from the facts that $\Xi$ and $xH_{B}x^{-1}$ are relatively compact and that $\rho(\hf_{\Cc,x})(X)=\rho_{X,x}$.
\end{proof}

\subsection{Escaping to infinity on $\hat Z$}

Recall $\hat \hf=\hf+\af_{E}$. For a connected component $\Cc$ of $\af_{\rm o-reg}^{--}$ and $Y\in\Cc$, define $\hat\hf_{\Cc,x}=\lim_{t\to\infty}\Ad(\exp(tY)x)\hat\hf$. Obviously we have
$\hf_{\Cc,x} \triangleleft \hat \hf_{\Cc,x}$ and that $\hat \hf_{\Cc,x}$ is $\af$-invariant. We define
$$
\af_{x}^{E}:= \hat \hf_{\Cc,x}\cap \af \supseteq \af_{x}\, .
$$
It follows from Lemma \ref{Lemma Limits of subspaces}(\ref{Lemma Limits of subspaces - item 4}) that the space $\af_{x}^{E}$ does not depend on the connected component $\Cc$ of $\af_{\rm o-reg}^{--}$.
Furthermore, it is independent of the representative $x\in G$ of the double coset $PxH\in P\bs G/H$, cf.~(\ref{hf_C,x invariance}).
Note that $\af_{e}^{E}=\af_{E}$.

\begin{prop} \label{escape to infinity}Let $X\in\af^{-}\setminus\af_{x}^{E}$. Then
$\{ \exp(t X)x \hat H\mid t\geq 0\} $ is unbounded in $\hat Z$.
\end{prop}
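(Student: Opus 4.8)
The plan is to argue by contradiction and to reduce the boundedness of the curve to a statement about the limiting subalgebra
$$
\hat\hf_{X,x}:=\lim_{t\to\infty}\Ad(\exp(tX)x)\hat\hf\,,
$$
which exists in the Grassmannian of $d$‑planes in $\gf$ ($d=\dim\hat\hf$) because $\ad X$ is semisimple with real spectrum; moreover $\hat\hf_{X,x}$ is $\ad X$‑stable and, by Lemma \ref{E_C properties}(\ref{E_C properties item 2}) applied with $E=\Ad(x)\hat\hf$, one has $\big(\hat\hf_{X,x}\big)_{\Cc}=\hat\hf_{\Cc,x}$. Suppose that $\{\exp(tX)x\hat H\mid t\ge0\}$ is relatively compact in $\hat Z$. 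Since $G\to\hat Z$ admits local continuous sections, there is a compact $C\subseteq G$ with $\exp(tX)x\in C\hat H$ for all $t\ge0$; write $\exp(tX)x=c_{t}\hat h_{t}$ with $c_{t}\in C$, $\hat h_{t}\in\hat H$. As $\hat H$ normalizes $\hat\hf$ we get $\Ad(\exp(tX)x)\hat\hf=\Ad(c_{t})\hat\hf$, and passing to a subsequence with $c_{t_{n}}\to c_{\infty}\in C$ gives $\hat\hf_{X,x}=\Ad(c_{\infty})\hat\hf$. Thus $\hat\hf_{X,x}$ is $G$‑conjugate to $\hat\hf$; in particular it is again a real spherical subalgebra.

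Two further observations feed the contradiction. First, since $\af$ is fixed pointwise under the degeneration along $\Cc$, the identity $\big(\hat\hf_{X,x}\big)_{\Cc}=\hat\hf_{\Cc,x}$ yields $\hat\hf_{X,x}\cap\af\subseteq\hat\hf_{\Cc,x}\cap\af=\af_{x}^{E}$; hence the hypothesis $X\notin\af_{x}^{E}$ together with $X\in\af$ forces $X\notin\hat\hf_{X,x}$, so that $X\in N_{\gf}(\hat\hf_{X,x})\setminus\hat\hf_{X,x}$ (as $X$ normalizes $\hat\hf_{X,x}$). Second, the argument of Proposition \ref{vol growth} applies to $\hat Z$ as well — the local structure theorem (\ref{adapted parabolic}) holds for $\hat H$ with the same $Q$ — so boundedness of the curve also forces $\rho(\hat\hf_{\Cc,x})(X)=\tfrac12\tr\big(\ad X|_{\hat\hf_{X,x}}\big)\le0$ (here $\tr(\ad X|_{\hat\hf_{\Cc,x}})=\tr(\ad X|_{\hat\hf_{X,x}})$ because the degenerating direction commutes with $X$).

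The contradiction should then come from Lemma \ref{normalizer lemma}(\ref{normalizer lemma part 4}): since $\hat\hf_{X,x}$ is real spherical, every $\ad_{\gf}$‑nilpotent element of $N_{\gf}(\hat\hf_{X,x})$ already lies in $\hat\hf_{X,x}$. I expect the main obstacle to be the complementary estimate — showing that, precisely because $X\in\af^{-}\setminus\af_{x}^{E}$ (so $X$ does \emph{not} fix $\Ad(x)\hat\hf$ and the degeneration $\Ad(x)\hat\hf\rightsquigarrow\hat\hf_{X,x}$ is proper), the normalizer $N_{\gf}(\hat\hf_{X,x})$ must contain an $\ad_{\gf}$‑nilpotent element outside $\hat\hf_{X,x}$. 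The natural route is to decompose $\hat\hf_{X,x}$ into $\ad X$‑eigenspaces: since $X\in\af^{-}$, the eigenspaces with negative (resp. positive) eigenvalue lie in $\nf$ (resp. $\oline\nf$) and so consist of nilpotent elements, and a vector of $\Ad(x)\hat\hf$ whose $\ad X$‑leading term moves strictly — together with the transversality statement of Lemma \ref{Lemma Ad(x)hf+pf+nf^x=gf} underlying Proposition \ref{vol growth} — should produce such a nilpotent normalizing direction, or else force $X$ to fix $\Ad(x)\hat\hf$, i.e.\ $X\in\af_{x}^{E}$. Carrying this out while working only with the finitely many $\af$‑stable limit algebras $\hf_{\Cc,x}$ rather than with the unknown $\Ad(x)\hf$ is the technical heart of the proof.
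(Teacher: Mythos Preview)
Your setup is correct and matches the paper: argue by contradiction, pass to a limit along a subsequence $c_{t_n}\to c_\infty$, and deduce $\hat\hf_{X,x}=\Ad(c_\infty)\hat\hf$; then observe that $X$ normalizes $\hat\hf_{X,x}$ but, since $\hat\hf_{X,x}\cap\af\subseteq\hat\hf_{\Cc,x}\cap\af=\af_x^E$, does not lie in it. The gap is in what you do next. You reach for Lemma~\ref{normalizer lemma}(\ref{normalizer lemma part 4}) and try to manufacture a \emph{nilpotent} element of $N_\gf(\hat\hf_{X,x})\setminus\hat\hf_{X,x}$; but the element you actually have, $X$, is semisimple, and there is no mechanism here to produce a nilpotent normalizing direction --- the ``complementary estimate'' you flag as the main obstacle is in fact not available. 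The detour through Proposition~\ref{vol growth} and the inequality $\rho(\hat\hf_{\Cc,x})(X)\le 0$ is unrelated to the contradiction and should be dropped.

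The paper instead uses parts (\ref{normalizer lemma part 2}) and (\ref{normalizer lemma part 3}) of Lemma~\ref{normalizer lemma}: from $\hat\hf_{X,x}=\Ad(c_\infty)\hat\hf$ and $X\in N_\gf(\hat\hf_{X,x})$ one gets $\Ad(c_\infty)^{-1}X\in N_\gf(\hat\hf)=\hat\hf+\hat\mf$ with $\hat\mf\subseteq\mf$, hence
\[
X\in\hat\hf_{X,x}+\Ad(c_\infty)\hat\mf\,.
\]
Now apply $\Ad(\exp(tY))$ for $Y\in\Cc$ and let $t\to\infty$: using Lemma~\ref{E_C properties}(\ref{E_C properties item 2}) on the first summand gives $X\in\hat\hf_{\Cc,x}+\hat\mf_{\Cc,c_\infty}$. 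The key point is that $\hat\mf\subseteq\mf$ is a \emph{compact} subalgebra, so its limit $\hat\mf_{\Cc,c_\infty}$ is $\af$-stable with $\hat\mf_{\Cc,c_\infty}\cap\af=\{0\}$; since $X\in\af$, this forces $X\in\hat\hf_{\Cc,x}\cap\af=\af_x^E$, the desired contradiction. In short: you had the right conjugacy statement and should have combined it directly with the structural description $N_\gf(\hat\hf)=\hat\hf+\hat\mf$, rather than with the nilpotent-element statement.
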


\begin{proof}
Set $a_t:=\exp(tX)$.  We argue by contradiction and assume that
$\{ a_t x \hat H\mid t\geq 0\} $ is relatively compact in $\hat Z$.
Then there exists a compact set $C\subseteq G$ such that
\begin{equation}\label{C-incl}
a_t x \in Cx \hat H \qquad (t\geq 0)\, .
\end{equation}
Let
$$
\hat \hf^1:= \big(\Ad(x) \hat \hf\big)_{X}\, .
$$
With $\hat d:=\dim \hat \hf$ we notice that the natural map
$$
\hat Z \to \Gr_{\hat d}(\gf), \ \ g\hat H \mapsto   \Ad(g)\hat \hf
$$
is continuous and thus (\ref{C-incl}) implies that there exists a $c\in C$
such that $\hat \hf^1= \Ad(cx) \hat \hf$.  Since $\Ad(a_t) \hat \hf^1=\hat\hf^1$ for all $t\in \R$ we thus obtain that
$\Ad(c^{-1} a_t cx)\hat \hf =\Ad(x)\hat \hf$ and in particular $\Ad(c)^{-1} X \in N_\gf(\Ad(x)\hat \hf)=\Ad(x)N_\gf(\hat \hf)$.
Recall from Lemma \ref{normalizer lemma} (\ref{normalizer lemma part 2}, \ref{normalizer lemma part 3}) that $N_\gf(\hat \hf)=\hat \hf +\hat \mf$ for some subalgebra $\hat \mf \subseteq\mf$.
Hence it follows that
\begin{equation}\label{X in hat hf^1 + Ad(cx) hat mf}
X\in \hat \hf^1 + \Ad(cx)\hat \mf
=:\tilde\hf^{1}\,.
\end{equation}
We claim that $X\in\hat\hf^{1}$. To see this, assume that $X\notin\hat\hf^{1}$. Since $X$ is hyperbolic and the elements in $\Ad(cx)\hat \mf$ are elliptic, $X\notin\Ad(cx)\hat \mf$. Let $X_{\mf}\in\Ad(cx)\hat\mf$ be so that $X\in\hat\hf^{1}+X_{\mf}$. Let $\tilde H^{1}$ and $\hat H^{1}$ be the connected algebraic subgroups with Lie algebra equal to $\tilde \hf^{1}$ and $\hat \hf^{1}$, respectively. The map $\R X\to\R X_{\mf}$; $tX\mapsto tX_{\mf}$ induces a non-trivial algebraic homomorphism from $\R^{\times}$ to the compact group $\tilde H^{1}/\hat H^{1}$. This leads to a contradiction as such algebraic homomorphisms do not exist. This proves the claim.

Let $\Cc$ be a connected component of $\af^{--}_{\mathrm o-reg}$ so that $X\in\oline{\Cc}$ and let $Y\in\Cc$. Then by Lemma \ref{Lemma Limits of subspaces} (\ref{Lemma Limits of subspaces - item 3})
$$
(\hat\hf^{1})_{Y}
=(\Ad(x)\hat\hf)_{Y}
=\hat\hf_{\Cc,x}.
$$
Therefore,
$$
X
\in\hat\hf^{1}\cap\af
=\big(\hat\hf^{1}\cap\af\big)_{Y}
\subseteq(\hat\hf^{1})_{Y}\cap\af
=\hat\hf_{\Cc,x}\cap\af
=\af_{x}^{E}\,,
$$
which is the desired contradiction.
\end{proof}

\section{Principal asymptotics}

In this section we analyze the asymptotic behavior of generalized matrix coefficients $m_{v,\eta}$ where $\eta\in(\pi_{\lambda,\sigma}^{-\infty})^{H}$.
Before we state the main theorem, we introduce some notation.

Let $\sigma\in\hat M$ and $\lambda\in\af_{\C}^{*}$.
We identify $\pi_{\lambda,\sigma}^{\infty}$ with the space of smooth sections of the vector-bundle $V_{\sigma}\otimes\C_{\lambda}\times_{P}G\to P\bs G$. The support of a section or a functional is defined in the usual way as a closed subset of $P\bs G$. For an open subset $U$ of $P\bs G$ we define $\pi_{\lambda,\sigma}^{\infty}(U)$ to be the space of all $v\in \pi_{\lambda,\sigma}^{\infty}$ with compact support contained in $U$.
We write $\pi_{\lambda,\sigma}^{-\infty}(U)$ for the continuous dual of $\pi_{\lambda,\sigma}^{\infty}(U)$.

For $x\in G$ we define $[x]\in P\bs G$ to be the coset $Px$.

It follows from Lemma \ref{normalizer lemma}(\ref{normalizer lemma part 4}) that $\hat\hf\cap\Ad(x^{-1})\nf\subseteq\hf$. Moreover, for every $Y\in\af_{x}^{E}$ there exists a $Y_{\nf}\in\nf$ such that $Y+Y_{\nf}\in\Ad(x)\hat\hf$.
(See equation (\ref{eq formula for E_X}) in Lemma \ref{Lemma Limits of subspaces}.)
Therefore for $\chi\in(\hat\hf/\hf)^{*}_{\C}$ and $x\in G$ we may define $\chi_{x}\in(\af_{x}^{E})^{*}_{\C}$ to be given by the singleton
\begin{equation}\label{def chi_x}
\{\chi_{x}(Y)\}=\chi\Big([\Ad(x^{-1})(Y+\nf)]\cap\hat\hf\Big)\qquad(Y\in\af_{x}^{E})\,.
\end{equation}
Note that $\chi_x\big|_{\af_x}=0$ and that $\chi_{x}$ only depends on the $H$-orbit $P\bs PxH$, not on the representative $x\in G$ of the orbit.

\begin{theorem}\label{leading behavior}
Let $\eta\in(\pi_{\lambda,\sigma}^{-\infty})^{H}$ and let $x\in G$. Assume that there exists an open neighborhood $\Upsilon$ of $[x]$ in $P\bs G$ such that
\begin{equation}\label{support condition}
\supp\eta\cap \Upsilon= P\bs Px H\cap \Upsilon\,.
\end{equation}
Let $\Cc$ be a connected component of $\af_{\rm o-reg}^{--}$.
For every $X\in\oline\Cc$ there exists a neighborhood $\Omega$ of $[e]$ in $\Upsilon x^{-1}$ and
a unique pair of a constant $r_{X}\geq0$ and a non-zero functional $\eta_{X,x}\in\pi_{\lambda,\sigma}^{-\infty}(\Omega)$, satisfying
\begin{equation}\label{limit of functional}
\lim_{t\to\infty}e^{t \big(\lambda(X)+\rho_P(X)+2\rho(\oline\nf_{\Cc,x})(X)-r_{X}\big)}\pi_{\lambda,\sigma}^{\vee}\big(\exp(tX)x\big)\eta
=\eta_{X,x}\,.
\end{equation}
Here the limit is with respect to weak-$*$ topology on $\pi_{\lambda,\sigma}^{-\infty}(\Omega)$.

For $X\in\Cc$ outside of a finite set of hyperplanes $\mathcal{H}_{\Cc}$, there exists a $\omega\in-\N_{0}[\Pi]$, so that $\omega(X)=r_{X}$, and so that $\eta_{X,x}$ satisfies
\begin{align}
&\label{eta_(X,x) hf_C-invariance}
    \pi_{\lambda,\sigma}^{\vee}(\hf_{\Cc,x})\eta_{X,x}=\{0\}\,,\\
&\label{eta_(X,x) af-equivariance}
    \pi_{\lambda,\sigma}^{\vee}(Y)\eta_{X,x}=\big(-\lambda-\rho_P-2\rho(\oline\nf_{\Cc,x})+\omega\big)(Y)\eta_{X,x}
    \qquad(Y\in\af)\,.
\end{align}
Moreover, if $\chi\in(\hat\hf/\hf)^{*}_{\C}$ and $\eta$ satisfy
\begin{equation}\label{eta hat hf-equivariance}
\pi_{\lambda,\sigma}^{\vee}(Y)\eta
=-\chi(Y)\eta
\qquad(Y\in\hat\hf)\,,
\end{equation}
then
\begin{equation}\label{eta_(X,x)hat hf_C-equivariance}
    \pi_{\lambda,\sigma}^{\vee}(Y)\eta_{X,x}
    =-\chi_{x}(Y)\eta_{X,x}
    \qquad(Y\in\af_{x}^{E})\,.
\end{equation}
\end{theorem}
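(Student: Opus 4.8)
The plan is to prove the existence and uniqueness of the pair $(r_X, \eta_{X,x})$ by a Dirac-sequence / asymptotic-expansion argument, exploiting the fact that the deformed isotropy algebra $\hf_{\Cc,x}$ is $\af$-stable. First I would pass to the model of $\pi_{\lambda,\sigma}^\infty$ on functions satisfying (\ref{P-trans}) and transfer everything by the right translation $x$ so that, after replacing $\eta$ by $\pi_{\lambda,\sigma}^\vee(x)\eta$, the support condition (\ref{support condition}) becomes a condition near $[e]$: $\supp\eta$ agrees with $P\bs P\Ad(x^{-1})H$ (more precisely with $P\bs P H_x$, $H_x$ the group with Lie algebra $\Ad(x^{-1})\hf$) in a neighborhood $\Upsilon x^{-1}$ of $[e]$. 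In this model $\pi^\vee_{\lambda,\sigma}(\exp(tX))$ acts on functionals by pulling back along the action of $a_t^{-1}=\exp(-tX)$ on $P\bs G$ near $[e]$; using the local coordinate chart $\oline N \to P\bs G$, $\oline n\mapsto P\oline n$, and writing $a_t\oline n a_t^{-1}$, the contraction $a_t\to\infty$ along $X\in\oline\Cc$ compresses $\oline N$ toward the fixed-point set determined by the zero/negative eigenspaces of $\ad X$. The normalizing exponent $\lambda(X)+\rho_P(X)+2\rho(\oline\nf_{\Cc,x})(X)$ is exactly the Jacobian weight that makes the pushed-forward functional converge: the factor $\lambda+\rho_P$ comes from the $\pi_{\lambda,\sigma}$-cocycle in (\ref{P-trans}), and the factor $2\rho(\oline\nf_{\Cc,x})$ is the infinitesimal volume distortion of the piece of $\Ad(x^{-1})\hf$ lying inside $\oline\nf$ that collapses in the limit. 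I would build $\eta_{X,x}$ by first establishing the limit on the dense subspace $\pi^\infty_{\lambda,\sigma}(\Omega)$ for $\Omega$ a small enough $\af$-contractible neighborhood of $[e]$; boundedness of the rescaled functionals on each such $\pi^\infty_{\lambda,\sigma}(\Omega)$ forces the rescaled family to be relatively compact, and the limit — if it exists along one cofinal sequence — is forced by the following invariance considerations, hence is unique and genuinely a limit.

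\textbf{The equivariance relations.} For the two displayed relations (\ref{eta_(X,x) hf_C-invariance}) and (\ref{eta_(X,x) af-equivariance}), the key point is that $\hf_{\Cc,x}=\lim_{t\to\infty}e^{t\ad X}\Ad(x^{-1})\hf$ is precisely the "limiting symmetry" that the rescaled functionals acquire. For $Y\in\Ad(x^{-1})\hf$ one has $\eta(\pi_{\lambda,\sigma}(Y)v)=0$ (after the transfer); conjugating by $a_t$ turns $Y$ into $\Ad(a_t)Y$, whose dominant component (in the root-space decomposition) converges, after the rescaling, to the corresponding vector in $\hf_{\Cc,x}$ — the subdominant terms die because of the strict order-regularity built into $\Cc$, and the choice of $\Omega$ as a coordinate box keeps all the error terms controlled. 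Taking limits yields $\pi^\vee_{\lambda,\sigma}(\hf_{\Cc,x})\eta_{X,x}=0$. The $\af$-equivariance (\ref{eta_(X,x) af-equivariance}) is more structural: one computes directly that $\pi^\vee_{\lambda,\sigma}(\exp(sX'))$ for $X'\in\af$ acts on the rescaled family, and that in the limit the rescaling combines with the $(\lambda+\rho_P)$-cocycle and the Jacobian $2\rho(\oline\nf_{\Cc,x})$ to produce the stated eigen-relation up to an error $\omega(X')$; the fact that $\omega\in -\N_0[\Pi]$ and depends only on $\Cc$ (not on $X\in\Cc$, not on which cofinal sequence) is because the possible limiting eigenvalues form a discrete $\N_0[\Pi]$-translate lattice of the "generic" value $-\lambda-\rho_P-2\rho(\oline\nf_{\Cc,x})$, exactly as in the convergent power-series expansion recalled in Section 3 — and $r_X=\omega(X)\ge 0$ follows since $X\in\af^{--}$ and $\omega\in-\N_0[\Pi]$ give $\omega(X)\ge 0$.

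\textbf{The $\hat\hf$-twisting.} Assuming (\ref{eta hat hf-equivariance}), for $Y\in\af_x^E=\hat\hf_{\Cc,x}\cap\af$ pick $Y_\nf\in\nf$ with $\Ad(x^{-1})(Y+Y_\nf)\in\hat\hf$ (which exists by the discussion preceding the theorem, cf.\ (\ref{def chi_x}) and (\ref{E_C cap af identity})); then $\eta(\pi_{\lambda,\sigma}(\Ad(x^{-1})(Y+Y_\nf))v)=-\chi_x(Y)\eta(v)$ after transfer. Since $Y_\nf\in\nf$ is killed in the $a_t\to\infty$ limit (positive root spaces collapse under $e^{t\ad X}$ with $X\in\af^{--}$), conjugation and rescaling reduce $\Ad(x^{-1})(Y+Y_\nf)$ to its $\af$-component $Y$, and passing to the limit gives $\pi^\vee_{\lambda,\sigma}(Y)\eta_{X,x}=-\chi_x(Y)\eta_{X,x}$. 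One must check consistency with (\ref{eta_(X,x) af-equivariance}) on $\af_x^E$, i.e.\ that $\chi_x|_{\af_x}=0$ matches $\big(-\lambda-\rho_P-2\rho(\oline\nf_{\Cc,x})+\omega\big)|_{\af_x}$; this is where the hypothesis of unimodularity of $Z$ (giving $\rho_Q|_{\af_H}=0$) and the definition of $\chi_x$ as independent of the orbit representative enter, and it is essentially a compatibility bookkeeping already implicit in (\ref{DS1})--(\ref{DS3}).

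\textbf{Main obstacle.} The hard part is the existence of the limit (\ref{limit of functional}) with a single, well-defined exponent $r_X$: a priori the rescaled functionals could oscillate between several eigenvalues in the discrete set, or could fail to converge as distributions on all of $\pi^\infty_{\lambda,\sigma}(\Omega)$ rather than merely on a dense subspace. Controlling this requires a genuine asymptotic expansion — the analogue of the convergent power series $\sum_{\mu}\sum_\alpha c^\alpha_{\mu,v}(m;\log a)a^{\mu+\alpha}$ from Section 3, but now for the functional $\pi^\vee_{\lambda,\sigma}(\exp(tX)x)\eta$ applied to test sections supported near $[e]$ — together with a uniform estimate (the invariant Sobolev/volume-weight machinery of Appendix A, via Proposition \ref{vol growth}) showing the leading term is non-vanishing and isolated. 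Establishing that expansion, and that its leading exponent is controlled by $\hf_{\Cc,x}$ alone, is the technical core; once it is in hand, the equivariance relations and the $\hat\hf$-twisting follow by the limiting arguments sketched above.
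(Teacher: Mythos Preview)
Your high-level strategy---Dirac compression along $a_t=\exp(tX)$, with the equivariance relations (\ref{eta_(X,x) hf_C-invariance}), (\ref{eta_(X,x) af-equivariance}), (\ref{eta_(X,x)hat hf_C-equivariance}) deduced afterwards by passing limits through conjugation---is indeed the paper's strategy, and your derivation of those equivariance statements (once the limit exists) is essentially correct and matches the paper's argument.

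The genuine gap is exactly where you flag it: the existence of the limit (\ref{limit of functional}) with a single exponent $r_X$ and a \emph{non-zero} limiting functional. The tools you propose for this are not the ones that work. The power-series expansion from Section~3 is valid only for $K$-finite vectors $v\in V$ evaluated along $A_Z^-$ at the base point $z_0$; it relies on the monoid $\N_0[S]$ and the local structure theorem at the open $P$-orbit, and there is no version of it for compactly supported test sections at a shifted base point $x\cdot z_0$ lying on a non-open $H$-orbit in $P\backslash G$. Likewise, the invariant Sobolev lemma and Proposition~\ref{vol growth} give \emph{upper} bounds on matrix coefficients; they cannot produce a non-vanishing leading term or isolate a single exponent.

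What the paper actually does is different and more explicit. One chooses a chart $\Phi:V_1\times V_2\to P\backslash G$ near $[x]$ adapted to the splitting $\gf=\pf\oplus\mathrm{Im}(1+\psi)\oplus\oline\nf_{\Cc}^x$, where $V_1$ parametrizes the $H$-orbit direction via $\Ad(x)\hf$ and $V_2=\oline\nf_{\Cc}^x$ is transversal. By Schwartz's structure theorem for distributions supported on a submanifold, $\Phi^*\eta=\sum_{|\mu|\le k}\eta_\mu\otimes\partial^\mu\delta$ with $\eta_\mu$ distributions on $V_1$. The decisive step (Lemma~\ref{analytic densities}) is an \emph{elliptic regularity} argument: the $H$-invariance of $\eta$ forces the vector $(\eta_\mu)_\mu$ to satisfy a first-order system coming from the $\hf$-action, from which one manufactures an elliptic operator annihilating it; hence each $\eta_\mu$ is given by a real-analytic density $f_\mu\,dY_1$, and moreover some $f_\mu$ of top order $|\mu|=k$ is non-vanishing at $0$. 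With analyticity in hand one Taylor-expands the $f_\mu$, performs the substitution $Y_1\mapsto\Ad(a_t^{-1})Y_1$ (this is where $2\rho(\oline\nf_{\Cc,x})$ enters as the Jacobian), uses Lemma~\ref{convergence of a psi ainv} to control $\Ad(a_t)\circ\psi\circ\Ad(a_t^{-1})$, and reads off $r_X$ as the maximum of an explicit finite set of exponents $\omega_{2,\nu}(X)-\omega_{1,\kappa}(X)$. The non-vanishing of $\eta_{X,x}$ comes from a linear-independence argument for the individual terms, and the reduction to a single $\omega\in-\N_0[\Pi]$ for $X\in\Cc$ uses that $\psi_X=0$ together with the already-established $\oline\nf_{\Cc,x}$-invariance to kill all $\kappa\neq0$ terms. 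None of this is visible from the abstract compactness/boundedness heuristic you sketch; the analyticity of the transverse coefficients is the missing idea.
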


\begin{rmk}\label{rmk support condition}
For every non-zero $H$-invariant functional $\eta\in\pi_{\lambda,\sigma}^{-\infty}$ there exist an $x\in G$ and an open neighborhood $\Upsilon$ of $[x]$ in $P\bs G$ such that (\ref{support condition}) holds.
Indeed, let $\Oc_{0}$ be an $H$-orbit in $\supp(\eta)$ of maximal dimension and let $x\in\Oc_{0}$.
The action of $H$ on $P\bs G$ admits finitely many orbits.
(See \cite{Bien} and \cite{KS}.)
Since $H$ is a real algebraic group, and $P\bs G$ is a real algebraic variety, and the action of $H$ on $P\bs G$ is real algebraic, the closure of any $H$-orbit $\Oc$ in $P\bs G$ consists of $\Oc$ and $H$-orbits of strictly smaller dimension.
See \cite[Proposition 8.3]{Hum}. Therefore,
$$
\Upsilon
:=\Oc_{0}\cup\bigcup_{\substack{\Oc\in P\bs G/H\\ \dim(\Oc)>\dim(\Oc_{0})}}\Oc\,.
$$
is an open neighborhood of $[x]$ and $\supp(\eta)\cap\Upsilon=\Oc_{0}$.
\end{rmk}

Before we prove the theorem we list some direct implications, which will be crucial in the following sections.

\begin{cor}\label{cor asymp}
Let $\eta\in\big(\pi_{\lambda,\sigma}^{-\infty}\big)^{H}$ and let $x\in G$. Assume that there exists an open neighborhood $\Upsilon$ of $[x]$ in $P\bs G$ such that (\ref{support condition}) holds.
Let $\mathcal{C}$ be a connected component of $\af_{\rm o-reg}^{--}$.
\begin{enumerate}[(i)]
\item\label{Cor asymp item 1}
For every $X\in\oline\Cc$
there exists a $r_{X}\geq 0$ and a $v\in\pi_{\lambda,\sigma}^{\infty}$ such that
$$
m_{v,\eta}\big(\exp(tX)x\cdot z_0\big)\sim  e^{t\big(-\lambda(X) -\rho_P(X) -2\rho(\oline\nf_{\Cc,x})(X)+r_{X}\big)}
\qquad (t\to \infty)\,.
$$
\item\label{Cor asymp item 2}
There exists a $\omega\in-\N_{0}[\Pi]$ such that
$$
\lambda\big|_{\af_{x}}
=\big(-\rho_P -2\rho(\oline\nf_{\Cc,x})+\omega\big)\big|_{\af_{x}}\,.
$$
\item\label{Cor asymp item 3}
Let $\chi\in(\hat\hf/\hf)^{*}_{\C}$ and assume that (\ref{eta hat hf-equivariance}) is satisfied. Then there exists a $\omega\in-\N_{0}[\Pi]$
such that
$$
\lambda\big|_{\af_{x}^{E}}
=\big(-\rho_P -2\rho(\oline\nf_{\Cc,x})+\omega\big)\big|_{\af_{x}^{E}}+\chi_{x}\,.
$$
Here $\chi_x$ is given by (\ref{def chi_x}).
\end{enumerate}
\end{cor}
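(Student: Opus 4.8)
The plan is to read off all three assertions directly from Theorem~\ref{leading behavior}; the analytic substance has already been packed into that theorem, so what remains is purely formal. The common starting point is the identity $m_{v,\eta}(g)=\eta(\pi_{\lambda,\sigma}(g^{-1})v)=[\pi_{\lambda,\sigma}^{\vee}(g)\eta](v)$ for $g\in G$, $v\in\pi_{\lambda,\sigma}^{\infty}$, together with the fact that $m_{v,\eta}$ descends to $Z$ (as $\eta$ is $H$-invariant). This lets me transfer both the asymptotic (\ref{limit of functional}) and the equivariance relations (\ref{eta_(X,x) hf_C-invariance})--(\ref{eta_(X,x)hat hf_C-equivariance}) for the functionals $\pi_{\lambda,\sigma}^{\vee}(\exp(tX)x)\eta$ into statements about $m_{v,\eta}(\exp(tX)x\cdot z_0)$.

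For part (i), I would fix $X\in\oline\Cc$ and invoke Theorem~\ref{leading behavior} to obtain the neighbourhood $\Omega$ of $[e]$, the constant $r_X\geq 0$, and the nonzero functional $\eta_{X,x}\in\pi_{\lambda,\sigma}^{-\infty}(\Omega)$ satisfying (\ref{limit of functional}). Since $\eta_{X,x}\neq 0$, I can choose $v\in\pi_{\lambda,\sigma}^{\infty}(\Omega)$ with $\eta_{X,x}(v)=1$ and regard it as an element of $\pi_{\lambda,\sigma}^{\infty}$; testing the weak-$*$ limit (\ref{limit of functional}) against this $v$ and using the identity above gives $e^{t(\lambda(X)+\rho_P(X)+2\rho(\oline\nf_{\Cc,x})(X)-r_X)}\,m_{v,\eta}(\exp(tX)x\cdot z_0)\to\eta_{X,x}(v)=1$ as $t\to\infty$, which is exactly the claimed equivalence (the choice of $v$ is allowed to depend on $X$). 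The only point to check is that the pairing in the theorem --- restriction to $\pi_{\lambda,\sigma}^{\infty}(\Omega)$ --- agrees with the full pairing defining $m_{v,\eta}$, which is clear because $\supp(v)\subseteq\Omega$.

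For parts (ii) and (iii), I would instead pick $X$ in the open component $\Cc$ itself, which is nonempty since $\Cc$ is a connected component of the dense open set $\af_{\rm o-reg}^{--}$ (if $\af^{--}=\emptyset$ there is nothing to prove), and then compare the two equivariance relations Theorem~\ref{leading behavior} attaches to the nonzero functional $\eta_{X,x}$, with $\omega\in-\N_0[\Pi]$ as provided there. For (ii): any $Y\in\af_x=\hf_{\Cc,x}\cap\af$ lies in $\hf_{\Cc,x}$, so (\ref{eta_(X,x) hf_C-invariance}) gives $\pi_{\lambda,\sigma}^{\vee}(Y)\eta_{X,x}=0$, and also in $\af$, so (\ref{eta_(X,x) af-equivariance}) gives $\pi_{\lambda,\sigma}^{\vee}(Y)\eta_{X,x}=(-\lambda-\rho_P-2\rho(\oline\nf_{\Cc,x})+\omega)(Y)\,\eta_{X,x}$; since $\eta_{X,x}\neq 0$, comparing the two forces $\lambda|_{\af_x}=(-\rho_P-2\rho(\oline\nf_{\Cc,x})+\omega)|_{\af_x}$. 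For (iii): under the extra hypothesis (\ref{eta hat hf-equivariance}), Theorem~\ref{leading behavior} additionally supplies (\ref{eta_(X,x)hat hf_C-equivariance}), i.e. $\pi_{\lambda,\sigma}^{\vee}(Y)\eta_{X,x}=-\chi_x(Y)\eta_{X,x}$ for $Y\in\af_x^E$; comparing this with (\ref{eta_(X,x) af-equivariance}) on $\af_x^E\subseteq\af$ and cancelling $\eta_{X,x}$ yields $\lambda|_{\af_x^E}=(-\rho_P-2\rho(\oline\nf_{\Cc,x})+\omega)|_{\af_x^E}+\chi_x$, with $\chi_x$ as in (\ref{def chi_x}).

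I do not expect a genuine obstacle here: the whole difficulty has been moved into Theorem~\ref{leading behavior}, and the corollary is bookkeeping built on top of it. The three points that merit a little care --- none of them an actual obstruction --- are the passage between the matrix-coefficient picture and the dual-functional picture in (i), the nonvanishing of $\eta_{X,x}$ (which is precisely what legitimises both the normalisation $\eta_{X,x}(v)=1$ in (i) and the cancellation of $\eta_{X,x}$ in (ii)--(iii)), and keeping straight that $\af_x\subseteq\hf_{\Cc,x}\cap\af$ and $\af_x^E\subseteq\hat\hf_{\Cc,x}\cap\af$, so that the invariance relation (\ref{eta_(X,x) hf_C-invariance}) (resp.\ the $\chi_x$-relation (\ref{eta_(X,x)hat hf_C-equivariance})) and the $\af$-weight relation (\ref{eta_(X,x) af-equivariance}) can be evaluated on the same vectors.
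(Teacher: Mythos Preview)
Your proposal is correct and essentially matches the paper's proof: part (i) uses nonvanishing of $\eta_{X,x}$ to pick $v$ with $\eta_{X,x}(v)=1$ and then reads off the asymptotic from (\ref{limit of functional}), and parts (ii)--(iii) compare the $\af$-weight relation (\ref{eta_(X,x) af-equivariance}) with the invariance/equivariance relations (\ref{eta_(X,x) hf_C-invariance}) and (\ref{eta_(X,x)hat hf_C-equivariance}) on $\af_x$ and $\af_x^E$ respectively. The only cosmetic difference is that the paper derives (ii) from (iii) via $\chi_x|_{\af_x}=0$, whereas you derive (ii) directly from (\ref{eta_(X,x) hf_C-invariance}) and (\ref{eta_(X,x) af-equivariance}); your route is arguably cleaner since it does not invoke the extra hypothesis (\ref{eta hat hf-equivariance}) that (iii) carries.
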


\begin{proof}
{\em Ad (\ref{Cor asymp item 1}):} The functional $\eta_{X,x}$ is non-zero, hence there exists a $v\in\pi_{\lambda,\sigma}^{\infty}(\Omega)$ for which $\eta_{X,x}(v)=1$. The claim now follows from (\ref{limit of functional}).\\
{\em Ad (\ref{Cor asymp item 3}):}
Let $X\in\Cc\setminus\mathcal{H}_{\Cc}$.
Since $\af_{x}^{E}=\hat\hf_{\Cc,x}\cap\af$, the identity follows from (\ref{eta_(X,x) af-equivariance}) and (\ref{eta_(X,x)hat hf_C-equivariance}).\\
{\em Ad (\ref{Cor asymp item 2}):}
The identity  follows from (\ref{Cor asymp item 3}) since $\chi_{x}\big|_{\af_{x}}=0$.
\end{proof}

In the remainder of this section we give the proof of Theorem \ref{leading behavior}.
\medbreak

We fix an element $x\in G$ and a connected component $\Cc$ of $\af_{\rm o-reg}^{--}$.
Recall that $\oline \nf_{\Cc}^{x}\subseteq\oline \nf$ is an $\af$-invariant vector complement of $\oline \nf_{\Cc,x}$, so that $\oline\nf =\oline\nf_{\Cc,x} \oplus\oline\nf_{\Cc}^{x}$. By Lemma \ref{Lemma Ad(x)hf+pf+nf^x=gf} we have
$$
\gf=(\Ad(x)\hf+\pf)\oplus\oline\nf_{\Cc}^{x}\,.
$$
Choose a subspace $\pf'$ of $\pf$ so that $\gf=\Ad(x)\hf\oplus\oline\nf_{\Cc}^{x}\oplus\pf'$.
Let
$$
\psi:\oline\nf_{\Cc,x}\to\oline\nf_{\Cc}^{x}+\pf
$$
be minus the restriction of the projection $\gf\to\oline\nf_{\Cc}^{x}\oplus\pf'$ along this decomposition.
Then
$$
Y+\psi(Y)\in\Ad(x)\hf\qquad(Y\in\oline\nf_{\Cc,x})\,.
$$
For every $Y\in\oline\nf_{\Cc,x}$
$$
Y
=(1+\psi)(Y)-\psi(Y)\in\mathrm{Im}(1+\psi)+\oline\nf_{\Cc}^{x}+\pf.
$$
Combining this with a dimension count yields
\begin{equation}\label{eq decomp with psi}
\gf
=\mathrm{Im}(1+\psi)\oplus\oline\nf_{\Cc}^{x}\oplus\pf.
\end{equation}

For the proof of Theorem \ref{leading behavior} we need the following lemma.

\begin{lemma}\label{Lemma convergence of a psi ainv}
Let $X\in\oline\Cc$ and let $\psi:\oline\nf_{\Cc,x}\to\oline\nf_{\Cc}^{x}+\pf$ as above. The limit
$$
\psi_{X}:=\lim_{t\to\infty}\Ad\big(\exp(tX)\big)\circ\psi\circ\Ad\big(\exp(-tX)\big)
$$
exists in the space of  linear maps $\oline\nf_{\Cc,x}\to\oline\nf_{\Cc}^{x}+\pf$.
Moreover, if $X\in\Cc$, then $\psi_{X}=0$.
\end{lemma}

\begin{proof}
Let $X_{0}\in\Cc$.
If $E$ is a line in the set $\Ad(x)\hf\setminus(\Ad(x)\hf\cap \pf)$, then in view of (\ref{eq formula for E_X}) in Lemma \ref{Lemma Limits of subspaces}, the limit $E_{X_{0}}$ is a line in $\oline\nf$. Since this limit is also contained in $\hf_{\Cc,x}$, it is in fact contained in $\hf_{\Cc,x}\cap\oline{\nf}=\oline\nf_{\Cc,x}$.
In particular, if $Y\in\oline\nf_{\Cc,x}\setminus\{0\}$, then $Y+\psi(Y)\in \Ad(x)\hf\setminus(\Ad(x)\hf\cap \pf)$ by (\ref{eq decomp with psi}), and hence the limit of $\Ad\big(\exp(tX_{0})\big)\R\big(Y+\psi(Y)\big)$ is a line in $\oline\nf_{\Cc,x}$.
Since $\oline\nf_{\Cc}^{x}\oplus\pf$ is stable under the adjoint action of $A$,
the eigenvalues of $\ad(X_{0})$ occurring in the decomposition of $\psi(Y)$ into eigenvectors must be smaller than the largest eigenvalue occurring in the decomposition of $Y$ into eigenvectors. Therefore, it follows that
\begin{equation}\label{limit of quotient}
\lim_{t\to\infty}\frac{\|\Ad\big(\exp(tX_{0})\big)\psi(Y)\|}{\|\Ad\big(\exp(tX_{0})\big)Y\|}=0\qquad(Y\in\oline\nf_{\Cc,x}\setminus\{0\})\,.
\end{equation}

For $\alpha\in\Sigma\cup\{0\}$ let $p_{\alpha}$ be the projection onto $\gf^{\alpha}$ with respect to the root space decomposition. Here  $\gf^{0}=\mf\oplus\af$. Let $\alpha,\beta\in\Sigma\cup\{0\}$. It follows from (\ref{limit of quotient}) that $p_{\beta}\circ\psi\circ p_{\alpha}\neq0$ implies that $\alpha(X_{0})-\beta(X_{0})>0$. Since this holds for every $X_{0}\in\mathcal{C}$, it follows that
$$
\psi=\sum_{\substack{\alpha,\beta\in\Sigma\cup\{0\}\\ (\alpha-\beta)|_{\mathcal{C}}>0}}p_{\beta}\circ\psi\circ p_{\alpha}\,.
$$
Now
$$
\Ad\big(\exp(tX)\big)\circ\psi\circ\Ad\big(\exp(-tX)\big)
=\sum_{\substack{\alpha,\beta\in\Sigma\cup\{0\}\\ (\alpha-\beta)|_{\mathcal{C}}>0}}
    e^{t\big(\beta(X)-\alpha(X)\big)}p_{\beta}\circ\psi\circ p_{\alpha}
$$
If $X\in\oline{\mathcal C}$, then $(\alpha-\beta)\big|_{\mathcal{C}}>0$ implies that $\alpha(X)\geq\beta(X)$. Therefore,
$$
\lim_{t\to\infty}\Ad\big(\exp(tX)\big)\circ\psi\circ\Ad\big(\exp(-tX)\big)
=\sum_{\substack{\alpha,\beta\in\Sigma\cup\{0\}\\ (\alpha-\beta)|_{\mathcal{C}}>0\\ \alpha(X)=\beta(X)}}
    p_{\beta}\circ\psi\circ p_{\alpha}\,.
$$
The first claim in the lemma now follows with
\begin{equation}\label{psi_X}
\psi_{X}
=\sum_{\substack{\alpha,\beta\in\Sigma\cup\{0\}\\ (\alpha-\beta)|_{\mathcal{C}}>0\\ \alpha(X)=\beta(X)}}
    p_{\beta}\circ\psi\circ p_{\alpha}\,.
\end{equation}
If $X\in\Cc$ then the sum in (\ref{psi_X}) is over the empty set, and hence $\psi_{X}=0$.
This proves the second assertion in the lemma.
\end{proof}

It follows from (\ref{eq decomp with psi}) and  the inverse function theorem that for sufficiently small neighborhoods $V_{1}$ of $0$ in $\oline\nf_{\Cc,x}$ and $V_{2}$ of $0$ in $\oline\nf_{\Cc}^{x}$, the map
$$
\Phi: V_{1}\times V_{2}\to P\bs G;\qquad (Y_{1},Y_{2})\mapsto P\exp(Y_{2})\exp\big(Y_{1}+\psi(Y_{1})\big)x
$$
is a diffeomorphism onto an open neighborhood of $[x]$. Moreover,
$$
V_{1}\ni Y\mapsto \Phi(Y,0)
$$
is a diffeomorphism onto a submanifold of $P\bs G$ contained in $P\bs PxH$. Because the dimension of the image equals the dimension of $P\bs PxH$, it in fact covers an open neighborhood of $[x]$ in $P\bs PxH$.

We view $\pi_{-\lambda,\sigma^{\vee}}^{\infty}$ and $C^{\infty}(V_{1}\times V_{2},V_{\sigma})$ as spaces of smooth sections of vector-bundles and write $\Phi^{*}$ for the pull-back along $\Phi$, i.e.,
$\Phi^{*}$ is the map $\pi_{-\lambda,\sigma^{\vee}}^{\infty}\to C^{\infty}(V_{1}\times V_{2},V_{\sigma}^{*})$ given by $\Phi^{*}v=v\circ\Phi$. This map has a continuous extension to a map $\Phi^{*}:\pi_{\lambda,\sigma}^{-\infty}\to\D'(V_{1}\times V_{2})\otimes V_{\sigma}^{*}$. Similarly we have a pull-back map $\pi_{\lambda,\sigma}^{\infty}\to C^{\infty}(V_{1}\times V_{2},V_{\sigma})$ which we also denote by $\Phi^{*}$. We note that there exists a strictly positive smooth function $J$ on $V_{1}\times V_{2}$ such that
\begin{equation}\label{pull-back chi}
\varphi(\phi)=\Phi^{*}\varphi(J\Phi^{*}\phi)
\end{equation}
for every $\varphi\in\pi_{\lambda,\sigma}^{-\infty}$ and $\phi\in \pi_{\lambda,\sigma}^{\infty}$ with $\supp\phi\subseteq\Phi(V_{1}\times V_{2})$.

Let $n=\dim(V_{2})$ and let $e_{1},\dots, e_{n}$ a basis of $\oline\nf_{\Cc}^{x}$ of joint eigenvectors
for the action of $\ad(\af)$. We write $\partial_{i}$ for the partial derivative in the direction $e_{i}$,
and whenever $\mu$ is an $n$-dimensional multi-index we write
$\partial^{\mu}$ for $\partial_{1}^{\mu_{1}}\dots\partial _{n}^{\mu_{n}}$.

Now $\Phi^{*}\eta$ is a $V_{\sigma}^{*}$-valued distribution on $V_{1}\times V_{2}$. From the condition (\ref{support condition}) on the support of $\eta$ it follows that the support of $\Phi^{*}\eta$ is contained in $V_{1}\times\{0\}$.
It follows from \cite[p. 102]{Schwartz} that there exist a minimal $k\in \N$ and for every multi-index $\mu$ with $|\mu|\leq k$ a $V_{\sigma}^{*}$-valued distribution $\eta_{\mu}$ on $V_{1}$ such that
\begin{equation}\label{Phi*eta decomp}
\Phi^{*}\eta
=\sum_{|\mu|\leq k}\eta_{\mu}\otimes \partial^{\mu}\delta\,.
\end{equation}
Here $\delta$ is the Dirac delta distribution at $0$ on $\oline\nf_{\Cc}^{x}$. Note that this decomposition of $\Phi^{*}\eta$ is unique.

\begin{lemma}\label{analytic densities}
\quad\begin{enumerate}[(i)]
\item\label{analytic densities item 1} For each multi-index $\mu$, the distribution $\eta_{\mu}$ is given by a real analytic function $f_{\mu}:V_{1}\to V_{\sigma}^{*}$, i.e. $\eta_{\mu}=f_{\mu}\,dY_{1}$ where $dY_{1}$ is the Lebesgue measure on $V_{1}$.
\item\label{analytic densities item 2}
For each $Y_{1}\in V_{1}$ there exists a $\mu$ of length $|\mu|=k$ so that $f_{\mu}(Y_{1})\neq 0$.
\end{enumerate}
\end{lemma}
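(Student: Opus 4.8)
The plan is to exploit the $H$-invariance of $\eta$ infinitesimally, together with the real analyticity of all the data at hand. First I would record that $\Phi$ is real analytic — it is assembled from $\exp$, the linear map $1+\psi$, and right translation by $x$ — and that the right $H$-action on $P\bs G$ and the homogeneous bundle $V_\sigma\otimes\C_\lambda\times_P G$ defining $\pi_{\lambda,\sigma}$ are real algebraic. Consequently, for every $Y\in\hf$ the operator $\pi_{\lambda,\sigma}^\vee(Y)$ pulls back, via $\Phi^*$ and (\ref{pull-back chi}), to a first order differential operator $D_Y$ on $V_1\times V_2$ with real analytic $\End(V_\sigma^*)$-valued coefficients, and $D_Y(\Phi^*\eta)=0$ because $\eta\in(\pi_{\lambda,\sigma}^{-\infty})^H$. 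The principal part of $D_Y$ is, up to a nowhere vanishing analytic scalar, the pullback of the vector field on $P\bs G$ generating the right $Y$-action. Since $H$ preserves the orbit $P\bs PxH=\Phi(V_1\times\{0\})$, this vector field is tangent to $V_1\times\{0\}$ along $V_1\times\{0\}$; writing $\xi_Y=\sum_j a_j^Y\,\partial_{Y_1^j}+\sum_i b_i^Y\,\partial_{Y_2^i}$ in the coordinates provided by $\Phi$ and the basis $e_1,\dots,e_n$, this amounts to $b_i^Y(Y_1,0)=0$. The key structural point is then that each $D_Y$ preserves the filtration by transverse order of the distributions supported on $V_1\times\{0\}$.

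For part (\ref{analytic densities item 1}) I would insert the decomposition (\ref{Phi*eta decomp}) into $D_Y(\Phi^*\eta)=0$ and expand with the Leibniz rule applied to $f(Y_2)\,\partial^\mu\delta$. Collecting the coefficient of $\partial^\nu\delta$ for each $\nu$, and using that $D_Y$ respects the transverse order, I expect identities of the form $\sum_j a_j^Y(Y_1,0)\,\partial_{Y_1^j}\eta_\nu=\Lambda_\nu^Y$, where $\Lambda_\nu^Y$ is a combination — with real analytic coefficients — of the $\eta_\mu$ and $\partial_{Y_1}\eta_\mu$ with $|\mu|>|\nu|$ and of the $\eta_\mu$ with $|\mu|=|\nu|$. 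Because $H$ acts transitively on $P\bs PxH$, the orbit map $H\to P\bs PxH$ is a submersion, so for each $Y_1$ the vectors $(a_j^Y(Y_1,0))_j$, $Y\in\hf$, span $\R^{\dim V_1}$; hence near any point I may fix $Y^{(1)},\dots,Y^{(\dim V_1)}\in\hf$ whose associated vectors form a basis and invert the resulting analytic matrix. This turns the identities into a first order linear system $\partial_{Y_1^j}\vec\eta_m=A_j(Y_1)\vec\eta_m+g_m(Y_1)$ for $\vec\eta_m:=(\eta_\mu)_{|\mu|=m}$, with $A_j$ real analytic, $g_k=0$, and $g_m$ real analytic by downward induction on $m=k,k-1,\dots,0$ (it involves only the $\eta_\mu$, $\partial_{Y_1}\eta_\mu$ with $|\mu|>m$, already known to be analytic). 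Differentiating once more yields $\Delta\vec\eta_m=(\text{analytic})\vec\eta_m+(\text{analytic})$, an elliptic equation with real analytic coefficients; by analytic hypoellipticity $\vec\eta_m$ is real analytic. This gives part (\ref{analytic densities item 1}).

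For part (\ref{analytic densities item 2}) I would use that $\vec\eta_k=(\eta_\mu)_{|\mu|=k}$ solves the homogeneous system $\partial_{Y_1^j}\vec\eta_k=A_j(Y_1)\vec\eta_k$. If $f_\mu(Y_1^0)=0$ for all $|\mu|=k$ at some $Y_1^0\in V_1$, then $\vec\eta_k(Y_1^0)=0$, and uniqueness for first order linear systems forces $\vec\eta_k$ to vanish near $Y_1^0$, hence, by part (\ref{analytic densities item 1}) and connectedness of $V_1$, on all of $V_1$. But then $\Phi^*\eta=\sum_{|\mu|\leq k-1}\eta_\mu\otimes\partial^\mu\delta$, contradicting the minimality of $k$. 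Therefore for every $Y_1\in V_1$ some $f_\mu(Y_1)$ with $|\mu|=k$ is nonzero.

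The main obstacle I anticipate is the bookkeeping in the order-by-order extraction in part (\ref{analytic densities item 1}): one must check that at transverse order $|\nu|$ only first order derivatives of the current unknowns $\eta_\mu$ ($|\mu|=|\nu|$) occur, all remaining contributions being either strictly lower in transverse order or already controlled from higher orders. This succeeds precisely because the $H$-action fixes the orbit $P\bs PxH$ (forcing $D_Y$ to be triangular with respect to transverse order) and is transitive on it (so the principal symbols $(a_j^Y(Y_1,0))_{Y,j}$ are jointly surjective onto $\R^{\dim V_1}$). Once the system is in place, upgrading the distributional solutions to real analytic functions is routine elliptic regularity.
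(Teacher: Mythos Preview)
Your proposal is correct and rests on the same three pillars as the paper's proof: (a) the $H$-invariance of $\eta$ yields differential constraints on the coefficients $\eta_\mu$; (b) the transitivity of $H$ on the orbit $P\bs PxH$ makes the tangential symbols surjective, so these constraints can be turned into an elliptic equation; (c) analytic elliptic regularity upgrades the $\eta_\mu$ to real analytic functions, and for (\ref{analytic densities item 2}) the homogeneity of the top-order system forces a zero at one point to propagate.

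Where you differ from the paper is in execution. The paper works at the group level, following Bruhat: it records the finite equivariance law $\xi_h^*\zeta=\Lambda(h,\cdot)\zeta$ for the full vector $\zeta=(\eta_\mu)_{|\mu|\le k}$ at once, and then, given an arbitrary elliptic operator $D$ on $V_1$, rewrites $D$ in terms of $\U_d(\hf)$-derivatives along the action and subtracts the $\Lambda$-term to manufacture an elliptic $D'$ with $D'\zeta=0$. You instead work infinitesimally with $\pi_{\lambda,\sigma}^\vee(Y)\eta=0$, extract a first-order system $\partial_{Y_1^j}\vec\eta_m=A_j\vec\eta_m+g_m$ by downward induction on the transverse order $m$, and obtain ellipticity by squaring. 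For (\ref{analytic densities item 2}) the paper uses the group-level law to transport a zero of $\vec\eta_k$ along the orbit; you use uniqueness for the homogeneous first-order system. Your route is a bit more hands-on and makes the triangular structure with respect to transverse order explicit (which is exactly the ``bookkeeping obstacle'' you flag); the paper's route is slicker in that no induction is needed once $\zeta$ is assembled. Both arguments are complete once one checks, as you do, that $b_i^Y(Y_1,0)=0$ forces each $D_Y$ to preserve the transverse-order filtration.
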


\begin{proof}
In the first part of the proof we follow the analysis of Bruhat as it is described in \cite[Section 5.2.3]{Warner}.
For $h\in H$ we write $U_{h}=\Phi^{-1}\big(\Phi(V_{1}\times V_{2})h^{-1}\big)$
and define the real analytic map
$$
\rho_{h}:U_{h}\to V_{1}\times V_{2};\qquad v\mapsto\Phi^{-1}\big(\Phi(v)h\big)\,.
$$
Note that $\rho_{h}$ maps $U_{h}\cap(V_{1}\times\{0\})$ to $V_{1}\times\{0\}$.
We further write
$$
U_{h,1}:=\{v\in V_{1}:(v,0)\in U_{h}\}
$$
and we define the map $\xi_{h}: U_{h,1}\to V_{1}$ to be given by $\rho_{h}(v,0)=(\xi_{h}(v),0)$ for $v\in V_{1}$.

For all multi-indices $\mu$ and $\nu$ with
$|\mu|,|\nu|\leq k$ there exists a real analytic function
$$
\lambda_{\mu,\nu}: \{(h,v)\in H\times V_{1}:v\in U_{h,1}\}\to\R
$$
such that
$$
\rho_{h}^{*}\big(\1_{V_{1}}\otimes\partial^{\mu}\delta\big)
=\sum_{|\nu|\leq k}\lambda_{\nu,\mu}(h,\dotvar) \otimes \partial^{\nu}\delta
\qquad(h\in H)\,.
$$
(The domain of definition of $\lambda_{\mu,\nu}$ is equal to the inverse image of $\Phi(V_{1},V_{2})$ under the smooth map $V_{1}\times H\to P\bs G$; $(v,h)\mapsto\Phi(v,0)h^{-1}$, and hence it is open.)
Note that pulling back along $\rho_{h}$ does not increase the order of the transversal derivatives, hence $\lambda_{\nu,\mu}=0$ whenever $|\nu|>|\mu|$.
We apply this identity to (\ref{Phi*eta decomp}) and obtain
\begin{align*}
\rho_{h}^{*}(\Phi^{*}\eta)
&=\sum_{|\mu|\leq k}\sum_{|\nu|\leq|\mu|}\lambda_{\nu,\mu}(h,\dotvar)\xi_{h}^{*}\eta_{\mu}\otimes\partial^{\nu}\delta\\
&=\sum_{|\mu|\leq k}\Big(\sum_{k\geq|\nu|\geq |\mu|}\lambda_{\mu,\nu}(h,\dotvar)\xi_{h}^{*}\eta_{\nu}\Big)\otimes\partial^{\mu}\delta\,.
\end{align*}
Since $\eta$ is an $H$-invariant functional we have $\rho_{h}^{*}(\Phi^{*}\eta)=\Phi^{*}\eta$ on $U_{h}$. Together with the uniqueness of the decomposition (\ref{Phi*eta decomp}) this implies for each $\mu$ that
$$
\eta_{\mu}\big|_{U_{h,1}}=\sum_{|\nu|\geq |\mu|}\lambda_{\mu,\nu}(h,\dotvar)\xi_{h}^{*}\eta_{\nu}
\qquad(h\in H)\,.
$$
We now apply the pull-back along $\xi_{h}$ to this identity with $h$ replaced by $h^{-1}$ and thus obtain
\begin{equation}\label{eta_mu equivariance}
\xi_{h}^{*}\eta_{\mu}
=\sum_{|\nu|\geq |\mu|}\lambda_{\mu,\nu}\big(h^{-1},\xi_{h}(\dotvar)\big)\eta_{\nu}\big|_{U_{h,1}}.
\end{equation}
Here we used that $\xi_{h}^{-1}(U_{h^{-1},1})=U_{h,1}$.

Let $n=\dim(\oline\nf_{\Cc}^{x})$ and let $S$ be the set of multi-indices $\mu\in\N_{0}^{n}$ with $|\mu|\leq k$.
We write $p_{\mu}$ for the projection of $(V_{\sigma}^{*})^{S}$ onto the $\mu^{\rm th}$ component and define $\zeta$ to be the $(V_{\sigma}^{*})^{S}$-valued distribution on $V_{1}$ which for a multi-index $\mu$ is given by
$$
p_{\mu}\zeta
=\eta_{\mu}\,.
$$
For $h\in H$ and $v\in U_{h,1}$, let $\Lambda(h,v)\in\End\big((V_{\sigma}^{*})^{S}\big)$ be given by
$$
p_{\mu}\circ \Lambda(h,v)\circ p_{\nu}=\lambda_{\mu,\nu}\big(h^{-1},\xi_{h}(v)\big)\,.
$$
Then
$$
\xi_{h}^{*}\zeta=\Lambda(h,\dotvar)\zeta\big|_{U_{h,1}}\,.
$$

We will finish the proof of the lemma by invoking the elliptic regularity theorem to show that $\zeta$ is locally given by a real analytic $(V_{\sigma}^{*})^{S}$-valued function. To this end, let $D$ be a real analytic elliptic differential operator of order $d>0$ on the trivial vector bundle $V_{1}\times (V_{\sigma}^{*})^{S}\to V_{1}$. (Such differential operators exist, e.g. $\Delta\otimes \1$ where $\Delta$ is the Laplacian on $V_{1}$ and $\1$ the identity operator on $(V_{\sigma}^{*})^{S}$.) Let $u_{1},\dots, u_{l}$ be a basis of $\U_{d}(\hf)$. Since $H$ acts transitively on $P\bs PxH$, there exist real analytic functions $c_{j}:V_{1}\to \End\big((V_{\sigma}^{*})^{S}\big)$ such that for $\phi\in C^{\infty}\big(V_{1},(V_{\sigma}^{*})^{S}\big)$
$$
D\phi(v)
=\sum_{j=1}^{l}c_{j}(v)u_{j}(\xi_{h}^{*}\phi)(v)\big|_{h=e}
\qquad(v\in V_{1})\,.
$$
Let $v_{0}\in V_{1}$.
Since $D$ is elliptic of order $d>0$, there exists a neighborhood $U$ of $v_{0}$ such that the operator $D'$, which for $\phi\in C^{\infty}\big(U,(V_{\sigma}^{*})^{S}\big)$ is given by
$$
D'\phi(v)
=\sum_{j=1}^{l}c_{j}(v_{0})u_{j}\big(\xi_{h}^{*}\phi-\Lambda(h,\dotvar)\phi\big)(v)\big|_{h=e}\qquad(v\in U)\,,
$$
is a real analytic elliptic differential operator on the vector bundle $U\times (V_{\sigma}^{*})^{S}\to U$.
Note that $D'\zeta=0$ on $U$. By the elliptic regularity theorem, there exists a real analytic function $f:U\to (V_{\sigma}^{*})^{s}$ such that $\zeta=f\, dY_{1}$ on $U$.
(See for example \cite[Theorem IV.4.9]{Wells} for the smoothness of the solutions and \cite[p. 144]{John} for the analyticity.)
Since $v_{0}$ was chosen arbitrarily, it follows that $f$ extends to an analytic function on $V_1$ and that $\zeta=f\, dY_{1}$ on $V_{1}$.
Let $f_{\mu}=p_{\mu}f$. Then $f_{\mu}$ is real analytic and $\eta_{\mu}=f_{\mu}dY_{1}$.
This proves (\ref{analytic densities item 1}).

By (\ref{eta_mu equivariance}) we have for every $\mu$ of length $|\mu|=k$
$$
f_{\mu}\big(\xi_{h}(Y_{1})\big)
=\sum_{|\nu|=k}\lambda_{\mu,\nu}\big(h^{-1},\xi_{h}(Y_{1})\big)f_{\nu}(Y_{1})\qquad (h\in H, Y_{1}\in U_{h,1}).
$$
Let $Y_{1}\in V_{1}$ be such that $f_{\nu}(Y_{1})=0$ for all $\nu$ of length $|\nu|=k$, then the right-hand side vanishes at the point $Y_{1}$ for all $h\in H$ such that $Y_{1}\in U_{h,1}$. This implies that the left-hand side vanishes on an open neighborhood of $Y_{1}$. Since the $f_{\mu}$ are analytic, it follows that all $f_{\mu}$ for $\mu$ of length $|\mu|=k$ vanish on $V_1$.
Assertion (\ref{analytic densities item 2}) now follows from the definition of $k$.
\end{proof}

\begin{proof}[Proof of Theorem \ref{leading behavior}]
Let $\Phi$ be as before.
Recall that $\pi_{\lambda,\sigma}^{\infty}\big(\im(\Phi)\big)$ is the space of all $v\in \pi_{\lambda,\sigma}^{\infty}$ with compact support contained in the image $\im(\Phi)$ of $\Phi$.
Let $v\in\pi_{\lambda,\sigma}^{\infty}\big(\im(\Phi)\big)$.
It follows from (\ref{pull-back chi}), (\ref{Phi*eta decomp}) and Lemma \ref{analytic densities}(\ref{analytic densities item 1}) that
\begin{align*}
&\eta(v)
=\Phi^{*}\eta\big(J\Phi^{*}(v)\big)\\
&=\sum_{|\mu|\leq k}(-1)^{|\mu|}
    \int_{V_{1}}\partial^{\mu}_{Y_{2}} \left[J(Y_{1},Y_{2})
    f_{\mu}(Y_{1})\Big(v\Big(\exp(Y_{2})\exp\big(Y_{1}+\psi(Y_{1})\big)x\Big)\Big)\right]\Big|_{Y_{2}=0}
    \,dY_{1}\,.
\end{align*}
By the Leibniz rule the integrand on the right-hand side is equal to
$$
\sum_{\nu\leq\mu}\binom{\mu}{\nu}
    \big[\partial^{\mu-\nu}_{Y_{2}}J(Y_{1},Y_{2})\big]\Big|_{Y_{2}=0}f_{\mu}(Y_{1})
        \Big[\partial^{\nu}_{Y_{2}}v\Big(\exp(Y_{2})\exp\big(Y_{1}+\psi(Y_{1})\big)x\Big)\Big]\Big|_{Y_{2}=0}\,.
$$

Note that the Jacobian $J$ is a real analytic function. By Lemma \ref{analytic densities}(\ref{analytic densities item 1}) also the functions $f_{\mu}$ are real analytic. Let $\epsilon_{1},\dots, \epsilon_{m}$ be a basis of $\oline\nf_{\Cc,x}$ consisting of joint eigenvectors for the action of $\ad(\af)$ on $\oline\nf_{\Cc,x}$.
For a multi-index $\kappa$ and $Y\in\oline\nf_{\Cc,x}$ define $Y^{\kappa}\in\R$ in the usual manner with respect to the basis $\epsilon_{1},\dots,\epsilon_{m}$.
By shrinking $V_{1}$ and $V_{2}$ we may assume that the Taylor series of $J$ and the $f_{\mu}$ are absolutely convergent on $V_{1}\times V_{2}$ and $V_{1}$, respectively.
Let
\begin{equation}\label{eq Taylor series}
(-1)^{|\mu|}\binom{\mu}{\nu}\big[\partial_{Y_{2}}^{\mu-\nu}J(Y_{1},Y_{2})\big]\big|_{Y_{2}=0}f_{\mu}(Y_{1})
=\sum_{\kappa}Y_{1}^{\kappa}c_{\mu,\nu}^{\kappa}
\end{equation}
be the Taylor expansion of the function on the left-hand side. Here for every multi-index $\kappa$ the coefficient $c_{\mu,\nu}^{\kappa}$ is an element of $V_{\sigma}^{*}$.
Since the series on the right-hand side of (\ref{eq Taylor series}) is absolutely convergent on $V_{1}$ and since $v$ has compact support in $\im(\Phi)$, we can
apply Lebesgue's dominated convergence theorem to interchange the integral and the sums, and obtain
\begin{equation}\label{eq formula for eta}
\eta(v)
=\sum_{|\nu|\leq k}\sum_{\kappa}
    \int_{V_{1}}Y_{1}^{\kappa}C_{\nu}^{\kappa}
        \Big[\partial^{\nu}_{Y_{2}}v\Big(\exp(Y_{2})\exp\big(Y_{1}+\psi(Y_{1})\big)x\Big)\Big]\Big|_{Y_{2}=0}\,dY_{1}\,,
\end{equation}
where
$$
C_{\nu}^{\kappa}
:=\sum_{|\mu|\leq k, \mu\geq \nu}c_{\mu,\nu}^{\kappa}\in V_{\sigma}^{*}\,.
$$
Recall that $e_{1},\dots, e_{n}$ is a basis of $\oline\nf_{\Cc}^{x}$ consisting of joint eigenvectors for the action of $\ad(\af)$ on $\oline\nf_{\Cc}^{x}$.
For a multi-index $\nu$, let $\omega_{2,\nu}\in-\N_{0}[\Pi]$ be the $\af$-weight of $e_{1}^{\nu_{1}}\cdots e_{n}^{\nu_{n}}\in\U(\oline\nf)$, where $\U(\oline\nf)$ denotes the universal enveloping algebra of $\oline\nf$. Further, for a multi-index $\kappa$ we define $\omega_{1,\kappa}\in-\N_{0}[\Pi]$ to be the $\af$-weight of $\epsilon_{1}^{\kappa_{1}}\cdots \epsilon_{m}^{\kappa_{n}}\in\U(\oline\nf)$.
Define
$$
\Xi
:=\{(\nu,\kappa): C_{\nu}^{\kappa}\neq0\}\,.
$$
Let $X\in\oline{\Cc}$ be fixed.
The set $\{\omega_{2,\nu}(X)-\omega_{1,\kappa}(X):(\nu,\kappa)\in\Xi\}$ is discrete. Moreover, it is bounded from above as there exists only finitely many multi-indices $\nu$ of length at most $k$ and $\omega_{1,\kappa}(X)\geq 0$ for every $\kappa$.
Define
\begin{equation}\label{r_X}
r_{X}
:=\max\{\omega_{2,\nu}(X)-\omega_{1,\kappa}(X):(\nu,\kappa)\in\Xi\}
\end{equation}
and
$$
\Xi_{X}
:=\{(\nu,\kappa)\in \Xi:\omega_{2,\nu}(X)-\omega_{1,\kappa}(X)= r_{X}\}\,.
$$
By Lemma \ref{analytic densities}(\ref{analytic densities item 2}) there
exists a multi-index $\mu_{0}$ of length $k$ such that $f_{\mu_{0}}(0)\neq 0$.
If we take $\mu=\nu=\mu_{0}$ then the left-hand side of (\ref{eq Taylor series}) is non-zero in $Y_{1}=0$. Therefore, the coefficient $C_{\mu_{0}}^{0}=c_{\mu_{0},\mu_{0}}^{0}\neq 0$, and hence $(\mu_{0},0)\in\Xi$. Since $\omega_{1,0}=0$, we have $r_{X}\geq \omega_{2,\mu_{0}}(X)\geq 0$.

We will now specify the domain $\Omega$ that appears in the theorem.
For this we first introduce a family of diffeomorphisms. For $t\in\R$, let $a_{t}:=\exp(tX)$.
We define
$$
\Psi_{t}:\Ad(a_{t})V_{1}\times \Ad(a_{t})V_{2}\to P\bs G;
\quad (Y_{1},Y_{2})\mapsto \Phi\Big(\Ad(a_{t}^{-1})Y_{1},\Ad(a_{t}^{-1})Y_{2}\Big)\,x^{-1}a_{t}^{-1}\,.
$$
Observe that $\Psi_{t}$ is a diffeomorphism onto its image for every $t\in\R$. For every $(Y_{1},Y_{2})\in\Ad(a_{t})V_{1}\times\Ad(a_{t})V_{2}$ we have
\begin{align*}
\Psi_{t}(Y_{1},Y_{2})
&=P\exp\Big(\Ad(a_{t}^{-1})Y_{2}\Big)\exp\Big(\Ad(a_{t}^{-1})Y_{1}+\psi\big(\Ad(a_{t}^{-1})Y_{1}\big)\Big)a_{t}^{-1}\\
&=P\exp(Y_{2})\exp\Big(Y_{1}+\Ad(a_{t})\psi\big(\Ad(a_{t}^{-1})Y_{1}\big)\Big)\,.
\end{align*}

Let $\mathcal{G}_{X}$ be the graph of $\psi_{X}$. Then $\gf=\pf\oplus\mathcal{G}_{X}\oplus\oline\nf_{\Cc}^{x}$, and thus there exist open neighborhoods $W_{1}$ and $W_{2}$ of $0$ in $\oline\nf_{\Cc,x}$ and $\oline\nf_{\Cc}^{x}$ respectively such that the map
$$
\Psi_{\infty}:W_{1}\times W_{2}\to P\bs G\,,
$$
given by
$$
\Psi_{\infty}(Y_{1},Y_{2})=P\exp(Y_{2})\exp\big(Y_{1}+\psi_{X}(Y_{1})\big)\,,
$$
is a diffeomorphism onto an open neighborhood of $[e]$ in $P\bs G$.
The map $\Psi_{\infty}$ is a limit of the maps $\Psi_{t}$ in the following sense. Since $\Ad(a_{t})$ acts with eigenvalues larger or equal than $1$ on $\oline\nf_{\Cc,x}$ and $\oline\nf_{\Cc}^{x}$, there exist bounded open neighborhoods $U_{1}$ and $U_{2}$ of $0$ in $\oline \nf_{\Cc,x}$ and $\oline \nf_{\Cc}^{x}$, respectively, satisfying
$$
\overline{U_{1}}\subseteq W_{1}\cap\bigcap_{t\geq0}\Ad(a_{t})V_{1}
\quad\text{and}\quad
\overline{U_{2}}\subseteq W_{2}\cap\bigcap_{t\geq0}\Ad(a_{t})V_{2}\,.
$$
It follows from Lemma \ref{Lemma convergence of a psi ainv} that
\begin{equation}\label{eq limit Psi_t to Psi_infty}
\lim_{t\to\infty}\Psi_{t}(Y_{1},Y_{2})
=\Psi_{\infty}(Y_{1},Y_{2})
\qquad\big((Y_{1},Y_{2})\in U_{1}\times U_{2}\big)\,,
\end{equation}
where the limit takes place in the space of smooth maps $U_{1}\times U_{2}\to P\bs G$.
We claim that for sufficiently large $R>0$ there exists an open neighborhood $\Omega$ of $[e]$ in $P\bs G$ such that
\begin{equation}\label{eq Omega contained in images of Psi_t}
\Omega
\subseteq\Psi_{\infty}(U_{1}\times U_{2})\cap\bigcap_{t>R}\Psi_{t}(U_{1}\times U_{2})\,.
\end{equation}
Indeed, the constructive proof of the inverse function theorem (see for example Lemma 1.3 in \cite{Lang}) gives a lower bound on the size of the open neighborhood of $[e]\in P\bs G$  that is contained in $\Psi_{t}(U_{1}\times U_{2})$ in terms of the tangent map of $\Psi_{t}$ at $(0,0)$. The claim therefore follows immediately from (\ref{eq limit Psi_t to Psi_infty}).

For $(\nu,\kappa)\in\Xi$, let $\eta_{X}^{\nu,\kappa}\in\pi_{\lambda,\sigma}^{-\infty}(\Omega)$ be the functional which for $v\in\pi_{\lambda,\sigma}^{\infty}(\Omega)$ is  given by
\begin{equation}\label{def eta_X^(nu,kappa)}
\eta_{X}^{\nu,\kappa}(v)
:=\int_{U_{1}}Y_{1}^{\kappa}C_{\nu}^{\kappa}
        \Big[\partial_{Y_{2}}^{\nu}v\Big(\exp(Y_{2})\exp\big(Y_{1}+\psi_{X}(Y_{1})\big)\Big)\Big]\Big|_{Y_{2}=0}\,dY_{1}\,.
\end{equation}
We claim that (\ref{limit of functional}) holds with $r_{X}$ given by (\ref{r_X}) and $\eta_{X,x}$ by the sum
\begin{equation}\label{eta_X=sum eta_X^(nu,kappa)}
\eta_{X,x}
:=\sum_{(\nu,\kappa)\in\Xi_{X}}\eta_{X}^{\nu,\kappa}\,,
\end{equation}
where the sum is convergent in $\pi_{\lambda,\sigma}^{-\infty}(\Omega)$ with respect to the weak-$*$-topology.

To prove the claim, let $t>R$ and consider $v\in\pi_{\lambda,\sigma}^{\infty}(\Omega)$.
For every $Y_{2}\in \oline\nf_{\Cc}^{x}$ and $Y\in\gf$.
$$
[\pi_{\lambda,\sigma}(x^{-1}a_{t}^{-1})v]\big(\exp(Y_{2})\exp(Y) x\big)
=a_{t}^{-\lambda-\rho_{P}}v\Big(\exp\big(\Ad(a_{t})Y_{2}\big)\exp\big(\Ad(a_{t})Y\big)\Big)\,.
$$
From (\ref{eq formula for eta}) it follows that
\begin{align}\label{eq formula for eta - 2}
&a_{t}^{\lambda+\rho_{P}}\eta\big(\pi_{\lambda,\sigma}(x^{-1}a_{t}^{-1})v\big)\\
\nonumber&=\sum_{|\nu|\leq k}\sum_{\kappa}
    \int_{U_{1}}Y_{1}^{\kappa}C_{\nu}^{\kappa}
        \Big[\partial_{Y_{2}}^{\nu}v\Big(\exp\big(\Ad(a_{t})Y_{2}\big)\exp\Big(\Ad(a_{t})\big(Y_{1}+\psi(Y_{1})\big)\Big)\Big)\Big]\Big|_{Y_{2}=0}\,dY_{1}\,.
\end{align}
If $1\leq i\leq n$ and $\alpha$ is the root so that $e_{i}\in\gf^{\alpha}$, then $\Ad(a_{t})e_{i}=a_{t}^{\alpha}e_{i}$, and hence
$$
\frac{d}{ds}v\Big(\exp\big(\Ad(a_{t})(s e_{i})\big)\exp\big(\Ad(a_{t})Y\big)\Big)
=a_{t}^{\alpha}\frac{d}{ds}v\Big(\exp(se_{i})\exp\big(\Ad(a_{t})Y\big)\Big)\,.
$$
Applying the previous identity repeatedly yields
\begin{align*}
&\partial^{\nu}_{Y_{2}}v\Big(\exp\big(\Ad(a_{t})Y_{2}\big)\exp\big(\Ad(a_{t})Y\big)\Big)\big|_{Y_{2}=0}\\
&\qquad\qquad=a_{t}^{\omega_{2,\nu}}\partial_{Y_{2}}^{\nu}v\Big(\exp(Y_{2})\exp\big(\Ad(a_{t})Y\big)\Big)\big|_{Y_{2}=0}\,.
\end{align*}
Combining this identity with (\ref{eq formula for eta - 2}), we obtain
\begin{align*}
&a_{t}^{\lambda+\rho_{P}}\eta\big(\pi_{\lambda,\sigma}(x^{-1}a_{t}^{-1})v\big)\\
&=\sum_{|\nu|\leq k}\sum_{\kappa}
    a_{t}^{\omega_{2,\nu}}
    \int_{U_{1}}Y_{1}^{\kappa}C_{\nu}^{\kappa}
        \Big[\partial_{Y_{2}}^{\nu}v\Big(\exp(Y_{2})\exp\Big(\Ad(a_{t})\big(Y_{1}+\psi(Y_{1})\big)\Big)\Big)\Big]\Big|_{Y_{2}=0}\,dY_{1}\,.
\end{align*}
By definition of $\omega_{1,\kappa}$
$$
\big(\Ad(a_{t}^{-1})Y_{1}\big)^{\kappa}
=a_{t}^{-\omega_{1,\kappa}}Y_{1}^{\kappa}\qquad(Y_{1}\in\oline\nf_{\Cc,x})\,.
$$
We now perform a substitution of variables and obtain that
\begin{align*}
&\int_{U_{1}}Y_{1}^{\kappa}C_{\nu}^{\kappa}
        \Big[\partial_{Y_{2}}^{\nu}v\Big(\exp(Y_{2})\exp\Big(\Ad(a_{t})\big(Y_{1}+\psi(Y_{1})\big)\Big)\Big)\Big]\Big|_{Y_{2}=0}\,dY_{1}\\
&\qquad\qquad=a_{t}^{-2\rho(\oline\nf_{\Cc,x})-\omega_{1,\kappa}}
    \int_{\Ad(a_{t})U_{1}}Y_{1}^{\kappa}C_{\nu}^{\kappa}
        \big[v_{\nu,t}(Y_{1})\big]\,dY_{1}\,,
\end{align*}
where
$$
v_{\nu,t}(Y_{1})
:=
\partial_{Y_{2}}^{\nu}v\Big(\exp(Y_{2})\exp\Big(Y_{1}+\Ad(a_{t})\psi\big(\Ad(a_{t}^{-1})Y_{1}\big)\Big)\Big)\Big|_{Y_{2}=0}
$$
for $Y_{1}\in \Ad(a_{t})U_{1}$. It follows from (\ref{eq Omega contained in images of Psi_t}) and the fact that $v$ is supported in $\Omega$, that
$$
\supp(v_{\nu,t})\subseteq U_{1}.
$$
Now
\begin{align}\label{eq series expansion of functional}
\nonumber&e^{t\big(\lambda(X)+\rho_{P}(X)+2\rho(\oline\nf_{\Cc,x})(X)-r_{X}\big)}\eta\big(\pi_{\lambda,\sigma}(x^{-1}a_{t}^{-1})v\big)\\
&\qquad=\sum_{|\nu|\leq k}\sum_{\kappa}e^{t\big(\omega_{2,\nu}(X)-\omega_{1,\kappa}(X)-r_{X}\big)}
    \int_{U_{1}}Y_{1}^{\kappa}C_{\nu}^{\kappa}[v_{\nu,t}(Y_{1})]\,dY_{1}\,.
\end{align}

Since $U_{1}$ is bounded, the support of the functions $v_{\nu,t}$ is bounded uniformly in $t>0$.
Therefore, $v_{\nu,t}$ converges for $t\to\infty$ in the space $C_{c}^{\infty}(U_{1},V_{\sigma})$ to the function
$$
Y_{1}\mapsto \partial_{Y_{2}}^{\nu}v\Big(\exp(Y_{2})\exp\big(Y_{1}+\psi_{X}(Y_{1})\big)\Big)\Big|_{Y_{2}=0}\,,
$$
and thus we obtain,
\begin{align*}
&\lim_{t\to\infty}\int_{U_{1}}Y_{1}^{\kappa}C_{\nu}^{\kappa}[v_{\nu,t}(Y_{1})]\,dY_{1}\\
&\qquad=\int_{U_{1}}Y_{1}^{\kappa}C_{\nu}^{\kappa}
        \Big[\partial_{Y_{2}}^{\nu}v\Big(\exp(Y_{2})\exp\big(Y_{1}+\psi_{X}(Y_{1})\big)\Big)\Big]\Big|_{Y_{2}=0}\,dY_{1}
=\eta_{X}^{\nu,\kappa}(v)\,.
\end{align*}
For the last equality we used (\ref{def eta_X^(nu,kappa)}).

Let $r=\sup_{Y_{1}\in U_{1}}\|Y_{1}\|$. Since $U_{1}$ is bounded, we have $r<\infty$. Moreover, since $\overline{U_{1}}\subseteq V_{1}$, we also have that $r$ is strictly smaller than the convergency radius of the Taylor series in (\ref{eq Taylor series}), and hence
\begin{equation}\label{eq bounded sum}
\sum_{\kappa}r^{|\kappa|}\|C_{\nu}^{\kappa}\|<\infty\,.
\end{equation}

As $v_{\nu,t}$ is bounded uniformly in $t>0$ and $\nu$, and $e^{t\big(\omega_{2,\nu}(X)-\omega_{1,\kappa}(X)-r_{X}\big)}\leq 1$ for all $t>0$ and $(\nu,\kappa)\in\Xi$, it follows from (\ref{eq bounded sum}) that the series in (\ref{eq series expansion of functional}) is absolutely convergent uniformly in $t>0$.
Therefore,
\begin{align*}
&\lim_{t\to\infty}e^{t\big(\lambda(X)+\rho_{P}(X)+2\rho(\oline\nf_{\Cc,x})(X)-r_{X}\big)}\eta\big(\pi_{\lambda,\sigma}(x^{-1}a_{t}^{-1})v\big)\\
&\qquad=\sum_{|\nu|\leq k}\sum_{\kappa}\lim_{t\to\infty}\Big(e^{t\big(\omega_{2,\nu}(X)-\omega_{1,\kappa}(X)-r_{X}\big)}
    \int_{U_{1}}Y_{1}^{\kappa}C_{\nu}^{\kappa}[v_{\nu,t}(Y_{1})]\,dY_{1}\Big)\\
&\qquad=\sum_{(\nu,\kappa)\in\Xi_{X}}\eta_{X}^{\nu,\kappa}(v)\,.
\end{align*}
This proves the claim that (\ref{limit of functional}) holds with $r_{X}$ given by (\ref{r_X}) and $\eta_{X,x}$ by the convergent sum
(\ref{eta_X=sum eta_X^(nu,kappa)}).

\medbreak

We claim that $\eta_{X,x}\neq0$.
Let $v\in\pi_{\lambda,\sigma}^{\infty}(\Omega)$. Since $v$ is compactly supported, it follows from (\ref{eq bounded sum}) and Lebesgue's dominated convergence theorem, that we may interchange the sum and the integral, so that
\begin{align}
\nonumber
    \eta_{X,x}(v)
    &=\sum_{(\nu,\kappa)\in\Xi_{X}}\int_{U_{1}}Y_{1}^{\kappa}C_{\nu}^{\kappa}
            \Big[\partial_{Y_{2}}^{\nu}v\Big(\exp(Y_{2})\exp\big(Y_{1}+\psi_{X}(Y_{1})\big)\Big)\Big]\Big|_{Y_{2}=0}\,dY_{1}\\
\label{eq formula for eta_X,x}
    &=\sum_{|\nu|\leq k}\int_{U_{1}}F_{\nu,X}(Y_{1})
          \Big[\partial_{Y_{2}}^{\nu}v\Big(\exp(Y_{2})\exp\big(Y_{1}+\psi_{X}(Y_{1})\big)\Big)\Big]\Big|_{Y_{2}=0}\,dY_{1}\,,
\end{align}
where $F_{\nu,X}:U_{1}\to V_{\sigma}^{*}$ is given by the absolutely convergent series
\begin{equation}\label{eq def F_nu,X}
F_{\nu,X}(Y_{1})
:=\sum_{\{\kappa:(\nu,\kappa)\in\Xi_{X}\}}Y_{1}^{\kappa}C_{\nu}^{\kappa}\,.
\end{equation}
If $\{\kappa:(\nu,\kappa)\in\Xi_{X}\}\neq\emptyset$, then $F_{\nu,X}$ is not identically equal to $0$ since it is given by an absolutely convergent power series with at least one non-zero coefficient. Since $\Xi_{X}\neq\emptyset$ there exists at least one multi-index $\nu_{0}$ so that $F_{\nu_{0},X}$ is not identically equal to $0$.

Let $v_{\sigma}\in V_{\sigma}$ and let $\phi_{1}\in C_{c}^{\infty}(U_{1})$ and $\phi_{2}\in C_{c}^{\infty}(U_{2})$.
We now take $v$ to be the element of $\pi_{\lambda,\sigma}^{\infty}(\Omega)$ that is determined by
$$
v\Big(\exp(Y_{2})\exp\big(Y_{1}+\psi_{X}(Y_{1})\big)\Big)
=\phi_{1}(Y_{1})\phi_{2}(Y_{2})v_{\sigma}
\qquad(Y_{1}\in U_{1},Y_{2}\in U_{2})\,.
$$
(Recall that $\Psi_{\infty}$ is a diffeomorphism, and hence $v$ is well defined.)
Then
$$
\eta_{X,x}(v)
=\sum_{|\nu|<k}\partial^{\nu}\phi_{2}(0)
    \int_{U_{1}}\big(F_{\nu,X}(Y_{1})(v_{\sigma})\big)\phi_{1}(Y_{1})\,dY_{1}\,.
$$
We assume that $v_{\sigma}$, $\phi_{1}$ and $\phi_{2}$ satisfy
\begin{enumerate}[(a)]
  \item $\displaystyle\partial^{\nu_{0}}\phi_{2}(0)=1$,
  \item If $\nu\neq\nu_{0}$, then $\displaystyle\partial^{\nu}\phi_{2}(0)=0$,
  \item $\displaystyle Y_{1}\mapsto F_{\nu_{0},X}(Y_{1})(v_{\sigma})$ is not identically equal to $0$,
  \item $\displaystyle\int_{U_{1}}\big(F_{\nu,X}(Y_{1})(v_{\sigma})\big)\phi_{1}(Y_{1})\,dY_{1}=1$.
\end{enumerate}
Under these assumptions we have $\eta_{X,x}(v)=1$, and hence $\eta_{X,x}\neq0$.

\medbreak

We move on to show (\ref{eta_(X,x) hf_C-invariance}) for $X\in\Cc$.
Let $\alpha\in\Sigma\cup\{0\}$ and let $Y\in(\hf_{\Cc,x}\cap \gf^{\alpha})\setminus\{0\}$.
For $Y'\in\gf$, we write
$$
\Ad(x)Y'
=\sum_{\beta\in\Sigma\cup\{0\}}Y'_{x,\beta}\,,
$$
with $Y'_{x,\beta}\in\gf^{\beta}$ for every $\beta\in\Sigma\cup\{0\}$.
In view of (\ref{eq formula for E_X}) in Lemma \ref{Lemma Limits of subspaces}
there exists an element $Y'\in\hf$ such that $Y'_{x,\alpha}=Y$ and $\alpha$ is the unique element of $\Sigma\cup\{0\}$ satisfying
$$
\alpha(X)
=\max\{\beta(X):\beta\in\Sigma\cup\{0\}, Y'_{x,\beta}\neq0\}\,.
$$
For every $v\in\pi^{\infty}_{\lambda,\sigma}$ we have
\begin{align*}
&e^{-t\alpha(X)}e^{t (\lambda+\rho_P+2\rho(\oline\nf_{\Cc,x})-\omega_{X})(X)}
    \pi_{\lambda,\sigma}^{\vee}\big(\exp(tX)x\big)\pi_{\lambda,\sigma}^{\vee}(Y')\eta\\
&\qquad=\sum_{\beta\in\Sigma\cup\{0\}}e^{t(\beta-\alpha)(X)}\pi_{\lambda,\sigma}^{\vee}(Y'_{x,\beta})
    \big[e^{t(\lambda+\rho_P+2\rho(\oline\nf_{\Cc,x})-\omega_{X})(X)}
            \pi_{\lambda,\sigma}^{\vee}\big(\exp(tX)x\big)\eta\big]\\
&\qquad\longrightarrow\pi_{\lambda,\sigma}^{\vee}(Y'_{x,\alpha})\eta_{X,x}
=\pi_{\lambda,\sigma}^{\vee}(Y)\eta_{X,x}
 \qquad(t\to\infty)\,.
\end{align*}
Here the limit is taken with respect to the weak-$*$ topology.
Since $Y'\in\hf$, we have $\pi_{\lambda,\sigma}^{\vee}(Y')\eta=0$, hence $\pi_{\lambda,\sigma}^{\vee}(Y)\eta_{X,x}=0$.
This proves (\ref{eta_(X,x) hf_C-invariance}).

For $X\in\Cc$, we have $\psi_{X}=0$ by Lemma \ref{Lemma convergence of a psi ainv}. Let $\oline N_{\Cc,x}$ be the connected subgroup of $G$ with Lie algebra $\oline\nf_{\Cc,x}$ and let $d\oline n$ denote the Haar measure on $\oline N_{\Cc,x}$.
Then the expression (\ref{eq formula for eta_X,x}) for $\eta_{X,x}$ simplifies to
\begin{align*}
\eta_{X,x}(v)
&=\sum_{|\nu|\leq k}\int_{\oline\nf_{\Cc,x}}F_{\nu,X}(Y_{1})
          \Big[\partial_{Y_{2}}^{\nu}v\Big(\exp(Y_{2})\exp(Y_{1})\Big)\Big]\Big|_{Y_{2}=0}\,dY_{1}\\
&=\sum_{|\nu|\leq k}\int_{\oline N_{\Cc,x}}F_{\nu,X}\big(\log(\oline{n})\big)
          \Big[\partial_{Y_{2}}^{\nu}v\Big(\exp(Y_{2})\oline n\Big)\Big]\Big|_{Y_{2}=0}\,d\oline n
          \qquad\big(v\in\pi_{\lambda,\sigma}^{\infty}(\Omega)\big)\,.
\end{align*}
Since $\oline\nf_{\Cc,x}\subseteq\hf_{\Cc,x}$, it follows from (\ref{eta_(X,x) hf_C-invariance}) that
$\pi_{\lambda,\sigma}^{\vee}(\oline\nf_{\Cc,x})\eta_{X,x}=\{0\}$.
Because of the invariance of the Haar measure on $\oline N_{\Cc,x}$, this implies that $F_{\nu,X}$ is constant for every $\nu$.
Therefore, only terms with $\kappa=0$ can contribute to $F_{\nu,X}$ in the series in (\ref{eq def F_nu,X}).
In particular it follows that $(\nu,\kappa)\in\Xi_{X}$ implies that $\kappa=0$. Moreover, $r_{X}$ in (\ref{r_X})
is equal to $\omega_{2,\mu_{0}}(X)$ for some multi-index $\mu_{0}$ with the property that $f_{\mu_{0}}(0)\neq 0$
and $f_{\mu}(0)=0$ for every $\mu>\mu_{0}$. Let $\omega:=\omega_{2,\mu_{0}}\in-\N_{0}[\Pi]$.
Then $\Xi_{X}$ consists of pairs $(\nu,0)$ with $\omega_{2,\nu}(X)=\omega(X)$.
The formula for $\eta_{X,x}$ simplifies further to
\begin{equation}\label{eta_(X,x)-final formula}
\eta_{X,x}(v)
=\sum_{\substack{|\mu|\leq k\\ \omega_{2,\mu}(X)=\omega(X)}}
    \int_{\oline N_{\Cc,x}}c_{\mu}
        \Big[\partial_{Y_{2}}^{\mu}v\big(\exp(Y_{2})\oline n\big)\Big]\Big|_{Y_{2}=0}\,d\oline n,
\qquad\big(v\in\pi_{\lambda,\sigma}^{\infty}(\Omega)\big)\,,
\end{equation}
with $c_{\mu}:=(-1)^{|\mu|}J(0,0)f_{\mu}(0)\in V_{\sigma}^{*}\setminus\{0\}$.

If we further impose on $X\in\Cc$ the condition that $\chi(X)\neq\chi'(X)$ whenever
$\chi,\chi'\in-\N_{0}[\Pi]$ are two different elements, each of which being a sum of at most
$k$ roots in $-\Sigma$, then $\omega_{2,\mu}(X)=\omega(X)$ if and only if $\omega_{2,\mu}=\omega$.
Equation (\ref{eta_(X,x) af-equivariance}) then follows directly from (\ref{eta_(X,x)-final formula}).

It remains to prove that (\ref{eta hat hf-equivariance}) implies (\ref{eta_(X,x)hat hf_C-equivariance}).
Let $Y\in\af_{x}^{E}$. Then $\Ad(x^{-1})(Y+\nf)\cap\hat\hf$ is non-empty, see (\ref{eq formula for E_X}) in Lemma \ref{Lemma Limits of subspaces}.
Let $Y'\in\Ad(x^{-1})(Y+\nf)\cap\hat\hf$. Then for every $v\in\pi^{\infty}_{\lambda,\sigma}$
\begin{align*}
&e^{t (\lambda+\rho_P+2\rho(\oline\nf_{\Cc,x})-\omega_{X})(X)}
   \pi_{\lambda,\sigma}^{\vee}\big(\exp(tX)x\big)\pi_{\lambda,\sigma}^{\vee}(Y')\eta\\
&\qquad=e^{t (\lambda+\rho_P+2\rho(\oline\nf_{\Cc,x})-\omega_{X})(X)}
   \pi_{\lambda,\sigma}^{\vee}\big(\Ad(\exp(tX)x)Y'\big)\pi_{\lambda,\sigma}^{\vee}\big(\exp(tX)x\big)\eta
\end{align*}
converges to $\pi_{\lambda,\sigma}^{\vee}(Y)\eta_{X,x}$ for $t\to\infty$.
Moreover, it follows from (\ref{eta hat hf-equivariance}) that it also converges to $-\chi(Y')\eta_{X,x}$ for $t\to\infty$. Here again the limits are taken with respect to the weak-$*$ topology. It thus follows that
$$
\pi_{\lambda,\sigma}^{\vee}(Y)\eta_{X,x}
=-\chi(Y')\eta_{X,x}\,.
$$
Now (\ref{eta_(X,x)hat hf_C-equivariance}) follows as $\chi(Y')=\chi_{x}(Y)$.
\end{proof}

\section{Integrality and negativity conditions}
Let us denote by $(\cdot, \cdot)$ the Euclidean structure on $\af^*$.  For $\alpha\in \af^*\bs\{0\}$ we define
$\alpha^\vee \in \af$ by  $\alpha^\vee:= 2\frac{(\cdot , \alpha)}{(\alpha, \alpha) }\in (\af^*)^*=\af$. Recall that if $\alpha\in \Sigma$
then $\alpha^\vee$ is called the co-root of $\alpha$ and $\Sigma^\vee:=\{ \alpha^\vee \mid \alpha\in\Sigma\}$
is a root system on $\af$,  called the dual root system.

For a connected component $\Cc$ of $\af_{\rm o-reg}^{--}$ and $x\in G$, define
$\lf_{\Cc, x}:= \hf_{\Cc, x} \cap \theta\hf_{\Cc, x}$.
Note that $\lf_{\Cc, x}$ is a reductive
$\af$-stable subalgebra of $\hf_{\Cc, x}$ and $\lf_{\Cc,x}\cap \af=\af_{x}$.
Moreover, it follows from (\ref{hf_C,x invariance}) that
$\lf_{\Cc,manxh}=\Ad(m)\lf_{\Cc,x}$ for $m\in M$, $a\in A$, $n\in N$ and $h\in H$.
For $\lambda\in \af_\C^*$ we set
$$
\Sigma(\lambda):=\{ \alpha\in \Sigma| \lambda(\alpha^\vee)\in \Z\}\, .
$$

\begin{lemma}\label{Lemma negative and integral}
Let $(V,\eta)$ be a spherical pair  belonging to the twisted discrete series and assume that there is a quotient
$\pi_{\lambda,\sigma}\twoheadrightarrow V$. Consider $\eta$ as an $H$-fixed element of $\pi_{\lambda,\sigma}^{-\infty}$ and let $x\in G$ satisfy the support condition (\ref{support condition}). (See Remark \ref{rmk support condition}.)
Then the following assertions hold.
\begin{enumerate}[(i)]
\item\label{Lemma negative and integral item 1} $\lambda\big|_{\af_x}\in (-\rho_P + \Z[\Pi])\big|_{\af_x}$.
\item\label{Lemma negative and integral item 2}
Let $\chi\in(\hat\hf/\hf)^{*}_{\C}$ be normalized unitary.
If $(V,\eta)$ belongs to the $\chi$-twisted discrete series, then
$$
\lambda\big|_{\af_{x}^{E}}
\in\frac{1}{2}\Z[\Pi]\big|_{\af_{x}^{E}}+i\im\chi_{x}\,.
$$
\end{enumerate}
Let $\Cc$ be a connected component of $\af_{\rm o-reg}^{--}$. Then the following hold.
\begin{enumerate}[(i)]
\setcounter{enumi}{2}
\item\label{Lemma negative and integral item 3} $\Sigma(\af, \lf_{\Cc, x})\subseteq\Sigma(\lambda)$.
\item \label{Lemma negative and integral item 4} $\re \lambda(X) \leq 2\rho(\lf_{\Cc, x}\cap \nf)(X) $  for all $X\in -\oline{\Cc}\subseteq \af^+$. The inequality is strict for $X\in -\oline{\Cc}\setminus\af_{x}^{E}$.
\end{enumerate}
\end{lemma}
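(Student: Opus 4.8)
The plan is to deduce all four assertions from the asymptotics of generalized matrix coefficients supplied by Theorem~\ref{leading behavior} and Corollary~\ref{cor asymp}, together with the volume-weight bounds of Proposition~\ref{vol growth} and (\ref{DS-sup})--(\ref{limit bound}) and the escape statement of Proposition~\ref{escape to infinity}. Fix a connected component $\Cc$ of $\af_{\rm o-reg}^{--}$; since $\eta$ is $H$-invariant and $x$ satisfies (\ref{support condition}), Corollary~\ref{cor asymp} applies, and if $(V,\eta)$ is $\chi$-twisted discrete series then $\eta$ satisfies (\ref{eta hat hf-equivariance}). I will use repeatedly that for any $\af$-stable $\sf\subseteq\gf$ the functional $2\rho(\sf)$ is an integral combination of restricted roots, so $\rho(\sf)\in\tfrac12\Z[\Pi]$ and $2\rho(\sf)(\alpha^\vee)\in\Z$ for $\alpha\in\Sigma$, and that $\rho_P(\alpha^\vee)\in\Z$ for $\alpha\in\Sigma$.

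Parts (i) and (ii) come directly from Corollary~\ref{cor asymp}(\ref{Cor asymp item 2}),(\ref{Cor asymp item 3}). For (i): $\lambda|_{\af_x}=(-\rho_P-2\rho(\oline\nf_{\Cc,x})+\omega)|_{\af_x}$ with $\omega\in-\N_0[\Pi]$ and $-2\rho(\oline\nf_{\Cc,x})+\omega\in\Z[\Pi]$, which is the claim. For (ii): $\lambda|_{\af_x^E}=(-\rho_P-2\rho(\oline\nf_{\Cc,x})+\omega)|_{\af_x^E}+\chi_x$; the bracket is real, so $\im\lambda|_{\af_x^E}=\im\chi_x$, and it suffices to show $\re\chi_x\in\tfrac12\Z[\Pi]|_{\af_x^E}$ (then the real part lands in $\tfrac12\Z[\Pi]|_{\af_x^E}$, since $-\rho_P\in\tfrac12\Z[\Pi]$ and $-2\rho(\oline\nf_{\Cc,x})+\omega\in\Z[\Pi]$). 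I claim $\re\chi_x=-\rho(\hat\hf_{\Cc,x})|_{\af_x^E}$, which settles it since $\hat\hf_{\Cc,x}$ is $\af$-stable. For $Y\in\af_x^E$ choose $Y'\in\Ad(x^{-1})(Y+\nf)\cap\hat\hf$; then $\chi_x(Y)=\chi(Y')$, and since $\chi$ factors through $\hat\hf/\hf$ with $\re\chi=-\rho_Q$ on $\af_E$, unimodularity of $Z$ (whence $\hf$ is unimodular, $\af_E$ normalizes $\hf$, so $\hf$ acts trivially on $\hat\hf/\hf$, and (\ref{Expression for Delta_hatZ}) identifies the trace of $\ad$ on $\hat\hf$ with $2\rho_Q$ of the $\af_E$-component) gives $\re\chi_x(Y)=-\tfrac12\tr\,\ad(Y')|_{\hat\hf}$. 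Finally, conjugation-invariance of the trace together with $\Ad(\exp(tY_0))\Ad(x)Y'\to Y$ and $\Ad(\exp(tY_0)x)\hat\hf\to\hat\hf_{\Cc,x}$ ($Y_0\in\Cc$) give $\tr\,\ad(Y')|_{\hat\hf}=\tr\,\ad(Y)|_{\hat\hf_{\Cc,x}}=2\rho(\hat\hf_{\Cc,x})(Y)$. This identification of $\re\chi_x$ is the point I expect to require the most care, since $\chi_x$ is defined purely through the $P\times H$-geometry at $x$ while the integrality must come from the modular data of $\hat Z$.

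For (iii), fix $X\in\Cc$ and let $\eta_{X,x}\neq0$ be as in Theorem~\ref{leading behavior}: by (\ref{eta_(X,x) hf_C-invariance}) it is annihilated by $\hf_{\Cc,x}$, and by (\ref{eta_(X,x) af-equivariance}) it has $\af$-weight $\nu:=-\lambda-\rho_P-2\rho(\oline\nf_{\Cc,x})+\omega$. The decisive observation is that $\alpha^\vee\in\af_x$ for every $\alpha\in\Sigma(\af,\lf_{\Cc,x})$: since $\lf_{\Cc,x}$ is $\theta$-stable and $\af$-stable, $\lf_{\Cc,x}\cap(\af\oplus\mf)=(\lf_{\Cc,x}\cap\af)\oplus(\lf_{\Cc,x}\cap\mf)$ (the $\mp1$-eigenspaces of $\theta$), while for $0\neq E\in\gf^\alpha\cap\lf_{\Cc,x}$ one has $\theta E\in\gf^{-\alpha}\cap\lf_{\Cc,x}$, and $[E,\theta E]\in\lf_{\Cc,x}\cap(\af\oplus\mf)$ has $\af$-component a nonzero multiple of $\alpha^\vee$. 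Hence $\alpha^\vee\in\af_x\subseteq\hf_{\Cc,x}$, so $\pi_{\lambda,\sigma}^\vee(\alpha^\vee)\eta_{X,x}$ equals both $0$ and $\nu(\alpha^\vee)\eta_{X,x}$; therefore $\nu(\alpha^\vee)=0$, i.e. $\lambda(\alpha^\vee)=(-\rho_P-2\rho(\oline\nf_{\Cc,x})+\omega)(\alpha^\vee)\in\Z$, which is (iii).

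For (iv), fix $X\in\oline\Cc$. By Corollary~\ref{cor asymp}(\ref{Cor asymp item 1}) there are $r_X\geq0$ and $v\in\pi_{\lambda,\sigma}^\infty$ with $|m_{v,\eta}(\exp(tX)x\cdot z_0)|\asymp e^{t(-\re\lambda(X)-\rho_P(X)-2\rho(\oline\nf_{\Cc,x})(X)+r_X)}$, while Proposition~\ref{vol growth} gives $\v(\exp(tX)x\cdot z_0)\geq Ce^{2t\rho(\hf_{\Cc,x})(X)}$, and $\rho(\hf_{\Cc,x})(X)=\rho(\oline\nf_{\Cc,x})(X)+\rho(\uf_{\Cc,x})(X)$ as $\ad(X)$ kills $(\mf\oplus\af)\cap\hf_{\Cc,x}$. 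Hence $|m_{v,\eta}|^2\v\gtrsim e^{2t\gamma}$ with $\gamma:=-\re\lambda(X)-\rho_P(X)-\rho(\oline\nf_{\Cc,x})(X)+\rho(\uf_{\Cc,x})(X)+r_X$. By (\ref{DS-sup}) the left side is bounded, so $\gamma\leq0$; if moreover $X\notin\af_x^E$, then by Proposition~\ref{escape to infinity} and (\ref{limit bound}) the left side tends to $0$ along some $t_n\to\infty$, whence $\gamma<0$. To finish, rewrite $\gamma$ using $\rho_P=\rho(\uf_{\Cc,x})+\rho(\uf_{\Cc}^x)$ and the $\theta$-equivariant $\af$-stable decompositions $\oline\nf_{\Cc,x}=(\lf_{\Cc,x}\cap\oline\nf)\oplus\oline\rf$, $\uf_{\Cc}^x=\theta\oline\rf\oplus\uf'$ (the latter valid because $\theta(\lf_{\Cc,x}\cap\nf)=\lf_{\Cc,x}\cap\oline\nf$ and $\theta\oline\rf\cap\hf_{\Cc,x}=\{0\}$), which yields $-\rho_P(X)-\rho(\oline\nf_{\Cc,x})(X)+\rho(\uf_{\Cc,x})(X)=\rho(\lf_{\Cc,x}\cap\nf)(X)-\rho(\uf')(X)$. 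Since $X\in\af^-$ makes $-\rho(\uf')(X)\geq0$ and $\rho(\lf_{\Cc,x}\cap\nf)(X)\geq2\rho(\lf_{\Cc,x}\cap\nf)(X)$, and $r_X\geq0$, the inequality $\gamma\leq0$ gives $\re\lambda(X)\geq2\rho(\lf_{\Cc,x}\cap\nf)(X)$, strictly when $\gamma<0$; replacing $X$ by $-X$ (so $X\in-\oline\Cc$) gives (iv).
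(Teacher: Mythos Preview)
Your argument is correct and follows essentially the paper's approach: parts (i)--(iii) are drawn from Corollary~\ref{cor asymp} together with the fact that $\alpha^\vee\in\lf_{\Cc,x}\cap\af=\af_x$ for $\alpha\in\Sigma(\af,\lf_{\Cc,x})$, and part (iv) combines Corollary~\ref{cor asymp}(\ref{Cor asymp item 1}) with Proposition~\ref{vol growth}, (\ref{DS-sup})--(\ref{limit bound}), and Proposition~\ref{escape to infinity} just as the paper does. Two remarks on details.

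In (ii) there is a harmless sign slip. From (\ref{Expression for Delta_hatZ}) and unimodularity one gets $\tr\ad(Y')|_{\gf/\hf}=2\rho_Q(Y_a)$, hence $\tr\ad(Y')|_{\hat\hf}=\tr\ad(Y')|_{\hf}=-2\rho_Q(Y_a)$, so $-\tfrac12\tr\ad(Y')|_{\hat\hf}=\rho_Q(Y_a)=-\re\chi(Y')$; thus $\re\chi_x=+\rho(\hat\hf_{\Cc,x})|_{\af_x^E}$ rather than its negative. Since $\rho(\hat\hf_{\Cc,x})\in\tfrac12\Z[\Pi]$ either way, the conclusion stands. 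The paper avoids this limit identification altogether: it observes directly that $\Ad(x)Y'\in Y+\nf$ forces the eigenvalues of $\ad(Y')$ on $\gf$ to lie in $\Z\Pi(Y)$, whence $\re\chi_x(Y)=-\tfrac12\tr\ad(Y')|_{\gf/\hf}\in\tfrac12\Z[\Pi](Y)$.

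In (iv) your bookkeeping via $\oline\nf_{\Cc,x}=(\lf_{\Cc,x}\cap\oline\nf)\oplus\oline\rf$ and a complement $\uf_\Cc^x\supseteq\theta\oline\rf$ is equivalent to the paper's, which instead splits $\hf_{\Cc,x}=\lf_{\Cc,x}\oplus\vf_{\Cc,x}$ and uses $\vf_{\Cc,x}\cap\theta\vf_{\Cc,x}=0$ to place $\rho_P+2\rho(\oline\nf_{\Cc,x})-\rho(\hf_{\Cc,x})$ in $-2\rho(\lf_{\Cc,x}\cap\nf)+\tfrac12\N_0[\Sigma^+]$. Note that the inclusion $\theta\oline\rf\subseteq\uf_\Cc^x$ requires a suitable choice of the complement $\uf_\Cc^x$, but since only the $\rho$-values enter this is harmless.
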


\begin{proof}
Assertion (\ref{Lemma negative and integral item 1}) is immediate from Corollary \ref{cor asymp}(\ref{Cor asymp item 2})
for any choice of $\Cc$.
We move on to (\ref{Lemma negative and integral item 2}).
By (\ref{Expression for Delta_hatZ}) we have
$$
|\det\Ad(ha)\big|_{\gf/\hf}|
=a^{2\rho_{Q}}\qquad(h\in H, a\in A_{Z,E})\,.
$$
We thus see that $\re \chi(Y')=-\frac{1}{2}\tr\ad(Y')\big|_{\gf/\hf}$ for every $Y'\in\hat\hf$. Let $Y\in\af_{x}^{E}$.
It follows from (\ref{eq formula for E_X}) in Lemma \ref{Lemma Limits of subspaces} that there exists an element $Y'$ in $\Ad(x^{-1})(Y+\nf)\cap\hat\hf$.
Now
$$
\re\chi_{x}(Y)
=\re\chi(Y')
=-\frac{1}{2}\tr \ad(Y')\big|_{\gf/\hf}
\in\frac{1}{2}\Z\mathrm{spec}\big(\ad(Y')\big)\,.
$$
The eigenvalues of $\ad(Y')$ are equal to the eigenvalues of $\ad(Y)$. Therefore,
$$
\re\chi_{x}\in\frac{1}{2}\Z[\Pi]\big|_{\af_{x}^{E}}\,.
$$
Since $(V,\eta)$ is $\chi$-twisted, assertion (\ref{Lemma negative and integral item 2}) now follows from Corollary \ref{cor asymp}(\ref{Cor asymp item 3}) for any choice of $\Cc$.

Assertion (\ref{Lemma negative and integral item 3}) is a consequence of (\ref{Lemma negative and integral item 1}) since $\lf_{\Cc,x}\cap \af=\af_{x}$ and hence $\alpha^{\vee}\in\af_{x}$ for all $\alpha\in\Sigma(\af,\lf_{\Cc,x})$.

Moving on to (\ref{Lemma negative and integral item 4}) we first observe that if $(V,\eta)$ is a spherical pair of the twisted discrete series and $\pi_{\lambda,\sigma}\twoheadrightarrow V$, then Corollary \ref{cor asymp} (\ref{Cor asymp item 1}) combined with the bound (\ref{DS-sup}) and Proposition \ref{vol growth} results for
$X\in -\oline{\Cc} \subseteq \af^{+}$ in the inequality
$$
\big( - \re\lambda -\rho_P -2\rho(\oline\nf_{\Cc,x})\big) (-X)+r_{-X} \leq  -\rho(\hf_{\Cc,x})(-X)\,.
$$
Hence
\begin{equation} \label{decay}
(-\re \lambda)(X) \geq   \big(\rho_P+ 2\rho(\oline\nf_{\Cc,x}) - \rho(\hf_{\Cc,x})\big)(X)
\end{equation}
for all $X\in -\oline \Cc$.
If $X\in -\oline\Cc\setminus\af_{x}^{E}$, then instead of (\ref{DS-sup}) we may use (\ref{limit bound}) in conjunction with Proposition \ref{escape to infinity} and conclude that in that case the inequality is strict.

Let $\vf_{\Cc,x}$ be an $\af$-stable complement of $\lf_{\Cc,x}$ in $\hf_{\Cc, x}$. Note that
$$
2\rho(\oline\nf_{\Cc,x}) - \rho(\hf_{\Cc,x})
=-2\rho(\lf_{\Cc,x}\cap\nf)+\rho(\vf_{\Cc,x}\cap\oline\nf)-\rho(\vf_{\Cc,x}\cap\nf)\,.
$$
Since $\vf_{\Cc,x}\cap\theta(\vf_{\Cc,x})=0$, it follows that
$$
\rho_P+ 2\rho(\oline\nf_{\Cc,x}) - \rho(\hf_{\Cc,x}) \in -2\rho(\lf_{\Cc,x}\cap \nf)  + \frac{1}{2} \N_{0}[\Sigma^{+}]\, .
$$
Now (\ref{Lemma negative and integral item 4}) follows from (\ref{decay}).
\end{proof}

\begin{cor}\label{Cor negative and integral}
\quad
\begin{enumerate}[(i)]
\item\label{Cor negative and integral item 1}
Let $\chi\in(\hat\hf/\hf)^{*}_{\C}$ be normalized unitary. There exists a finite set $S_{\chi}$ of pairs $(\bfrak,\nu)$, where $\bfrak$ is a subspace of $\af$ and $\nu\in\bfrak^{*}$, with the following property.  If $(V,\eta)$ is a spherical pair belonging to the $\chi$-twisted discrete series of representations, and there is a quotient
$\pi_{\lambda,\sigma}\twoheadrightarrow V$, then there exists an $\omega\in\Span_{\R}\big(\Sigma(\lambda)\big)$ and a pair $(\bfrak,\nu)\in S_{\chi}$ such that \begin{align*}
&\lambda\big|_{\bfrak}\in\frac{1}{2}\Z[\Pi]\big|_{\bfrak}+i\nu\,,\\
& \re\lambda(X) \leq \omega(X) \qquad (X\in \af^+)\,,\\
& \re\lambda(X) < \omega(X) \qquad (X\in \af^+\setminus\bfrak)\,.
\end{align*}
\item\label{Cor negative and integral item 2}
If $(V,\eta)$ is a spherical pair belonging to the discrete series of representations, and there is a quotient
$\pi_{\lambda,\sigma}\twoheadrightarrow V$, then there exists an $\omega\in\Span_{\R}\big(\Sigma(\lambda)\big)$ and a subspace $\bfrak$ of $\af$ such that
\begin{align*}
&\lambda\big|_{\bfrak}\in(-\rho_{P}+\Z[\Pi])\big|_{\bfrak}\subseteq\frac{1}{2}\Z[\Pi]\big|_{\bfrak}\,,\\
& \re\lambda(X) \leq \omega(X) \qquad (X\in \af^+)\,,\\
& \re\lambda(X) < \omega(X) \qquad (X\in \af^+\setminus\bfrak)\,.
\end{align*}
\end{enumerate}
\end{cor}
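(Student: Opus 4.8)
The plan is to read both parts off Lemma~\ref{Lemma negative and integral}; the only step that is not pure bookkeeping is turning its component-by-component bound Lemma~\ref{Lemma negative and integral}(\ref{Lemma negative and integral item 4}) into a single inequality valid on all of $\af^{+}$.

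First I would fix a finite set $X_{0}$ of representatives for the finitely many double cosets in $P\bs G/H$ (finiteness by \cite{Bien} and \cite{KS}) and put
\[
S_{\chi}:=\{(\af_{x}^{E},\im\chi_{x})\ :\ x\in X_{0}\}.
\]
Since $\af_{x}^{E}$ and $\chi_{x}$ depend only on the orbit $P\bs PxH$ (see the remarks after the definition of $\af_{x}^{E}$ and around~(\ref{def chi_x})), $S_{\chi}$ is a well-defined finite set. Now let $(V,\eta)$ belong to the $\chi$-twisted discrete series and $\pi_{\lambda,\sigma}\twoheadrightarrow V$; choose $x\in G$ satisfying the support condition~(\ref{support condition}) (Remark~\ref{rmk support condition}) and set $\bfrak:=\af_{x}^{E}$ and $\nu:=\im\chi_{x}\in\bfrak^{*}$, so that $(\bfrak,\nu)\in S_{\chi}$. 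The congruence $\lambda|_{\bfrak}\in\tfrac12\Z[\Pi]|_{\bfrak}+i\nu$ is then exactly Lemma~\ref{Lemma negative and integral}(\ref{Lemma negative and integral item 2}).

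For the two inequalities the key observation is that, for each connected component $\Cc$ of $\af^{--}_{\rm o-reg}$, the subalgebra $\lf_{\Cc,x}$ is $\af$-stable, so $\lf_{\Cc,x}\cap\nf$ is a sum of positive root spaces and
\[
\omega_{\Cc}:=2\rho(\lf_{\Cc,x}\cap\nf)=\sum_{\alpha\in\Sigma^{+}}\dim(\lf_{\Cc,x}\cap\gf^{\alpha})\,\alpha
\]
is a nonnegative integer combination of positive roots; in particular $\omega_{\Cc}(X)\geq0$ for all $X\in\af^{+}$, and by Lemma~\ref{Lemma negative and integral}(\ref{Lemma negative and integral item 3}) every root occurring here lies in $\Sigma(\af,\lf_{\Cc,x})\subseteq\Sigma(\lambda)$, whence $\omega_{\Cc}\in\Span_{\R}(\Sigma(\lambda))$. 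I would then take $\omega:=\sum_{\Cc}\omega_{\Cc}$, the finite sum over all components; this again lies in $\Span_{\R}(\Sigma(\lambda))$. Given $X\in\af^{+}$, the point $-X$ lies in $\oline\Cc$ for some component $\Cc$ (the closures cover $\af^{-}$, as $\af^{--}_{\rm o-reg}$ omits only finitely many hyperplanes from the nonempty open cone $\af^{--}$), and Lemma~\ref{Lemma negative and integral}(\ref{Lemma negative and integral item 4}) gives $\re\lambda(X)\leq\omega_{\Cc}(X)\leq\omega(X)$, using that the remaining $\omega_{\Cc'}(X)\geq0$; moreover this inequality is strict whenever $X\notin\bfrak=\af_{x}^{E}$, again by Lemma~\ref{Lemma negative and integral}(\ref{Lemma negative and integral item 4}). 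This establishes part~(\ref{Cor negative and integral item 1}).

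Part~(\ref{Cor negative and integral item 2}) is the same argument for $\chi=0$: by Lemma~\ref{DS implies H=hat H} a discrete series pair forces $\hat\hf=\hf$, so $\af_{x}^{E}=\af_{x}$, and we take $\bfrak:=\af_{x}$ together with the same $\omega$, obtaining the two inequalities verbatim. For the finer congruence one uses Lemma~\ref{Lemma negative and integral}(\ref{Lemma negative and integral item 1}), giving $\lambda|_{\bfrak}\in(-\rho_{P}+\Z[\Pi])|_{\bfrak}$, which is contained in $\tfrac12\Z[\Pi]|_{\bfrak}$ because $\rho_{P}=\tfrac12\sum_{\alpha\in\Sigma^{+}}\dim(\gf^{\alpha})\,\alpha\in\tfrac12\Z[\Pi]$. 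I expect no serious obstacle here: the content is entirely in Lemma~\ref{Lemma negative and integral}, and the one thing to notice is that summing the $\omega_{\Cc}$ is legitimate precisely because each is a nonnegative combination of positive roots and hence $\geq0$ on $\af^{+}$, which is what lets the component-wise bounds be replaced by a single one.
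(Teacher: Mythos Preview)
Your proposal is correct and follows essentially the same route as the paper: the same finite set $S_\chi=\{(\af_x^E,\im\chi_x)\}$, the same choice $\omega=\sum_\Cc 2\rho(\lf_{\Cc,x}\cap\nf)$, and the same appeal to Lemma~\ref{Lemma negative and integral}(\ref{Lemma negative and integral item 1})--(\ref{Lemma negative and integral item 4}) together with Lemma~\ref{DS implies H=hat H} for part~(\ref{Cor negative and integral item 2}). Your explicit justification that each $\omega_\Cc\in\N_0[\Sigma(\lambda)\cap\Sigma^+]$ is nonnegative on $\af^+$, so that the component-wise bound $\re\lambda(X)\le\omega_\Cc(X)$ may be absorbed into the single bound $\re\lambda(X)\le\omega(X)$, spells out a step the paper leaves implicit.
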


\begin{proof}
{\em Ad (i):} Let $S_{\chi}$ be the set of pairs $(\af_{x}^{E},\chi_{x})$ where $x$ runs over a set of representatives in $G$ of $H$-orbits in $P\bs G$.
Consider $\eta$ as an $H$-fixed element of $\pi_{\lambda,\sigma}^{-\infty}$.
Then there exists an $H$-orbit in $P\bs G$ so that the support condition (\ref{support condition}) is satisfied. See Remark \ref{rmk support condition}.
Let $x\in G$ be the representative of the orbit.
The assertions now follow from (\ref{Lemma negative and integral item 2}),
(\ref{Lemma negative and integral item 3}) and (\ref{Lemma negative and integral item 4})
in Lemma \ref{Lemma negative and integral} with $\omega=\sum_{\Cc} 2\rho(\lf_{\Cc, x}\cap \nf)$,
$\bfrak=\af_{x}^{E}$ and $\nu=\im\chi_{x}$.
\\
{\em Ad (ii):} If $V$ belongs to a discrete series representation, then $\hat\hf=\hf$ by Lemma \ref{DS implies H=hat H},
and therefore $\af_x^E=\af_x$. We set $\bfrak=\af_x$ and use (\ref{Lemma negative and integral item 1})
in Lemma \ref{Lemma negative and integral} instead of (\ref{Lemma negative and integral item 2}).
\end{proof}

\section{Negativity versus integrality in root systems}\label{section negative integral}
In this section we develop some general theory which is independent of the results in previous sections.

\subsection{Equivalence relations}\label{subsection equivalence relations}

Let $\Sigma$ be a (possibly non-reduced) root system  spanning the Euclidean space $\af^*$.  We denote by $W$ the corresponding Weyl group. Let $\Pi\subseteq \Sigma$ be a basis,
$\Sigma^+$ the corresponding positive system
and $C\subseteq \af=(\af^*)^*$ be the closure of the corresponding positive Weyl chamber, i.e.
$$
C=\{ x\in\af\mid (\forall\alpha\in \Pi)  \ \alpha(x) \geq 0\}\, .
$$
Further we use the notation $C^\times =C\bs\{0\}$.

We define an equivalence relation on $\af_\C^*$ by $\lambda\sim \mu$ provided that
$\mu$ is obtained from $\lambda$ via a sequence
$$
\lambda=\mu_0,\mu_1,\ldots,\mu_l=\mu\,,
$$
where for all $i$:
\begin{enumerate}[(a)]
\item $\mu_{i+1}=s_i(\mu_i)$ with $s_i=s_{\alpha_i} $ the simple reflection associated to $\alpha_i\in\Pi$,
\item $\mu_i(\alpha_i^\vee)\not\in \Z$.
\end{enumerate}
The equivalence class of $\lambda$ is denoted by $[\lambda]$.

A {\it root subsystem} $\Sigma^0$ of the root system $\Sigma$ is a
subset of $\Sigma$ that satisfies:
\begin{enumerate}[(a)]
\item  $\Sigma^0$ is
a root system in the subspace it spans,
\item  if $\alpha,\beta$
are in $\Sigma^0$, and $\gamma=\alpha+\beta\in\Sigma$, then
$\gamma \in\Sigma^0$.
\end{enumerate}
A root subsystem $\Sigma^0\subseteq \Sigma$ has a unique system of
positive roots $\Sigma^{0,+}$ contained in $\Sigma^+$.

Given now $\lambda\in\af_\C^*$ we define
\begin{align*}
\Sigma(\lambda)^\vee:=\{\alpha^\vee\in\Sigma^\vee|\lambda(\alpha^\vee)\in\Z\} \\
\Sigma(\lambda):=\{\alpha\in\Sigma|\lambda(\alpha^\vee)\in\Z\}\, .
\end{align*}
Clearly $\Sigma(\lambda)^\vee $  is a root subsystem of $\Sigma^\vee$,  but observe that
$\Sigma(\lambda)$ might not be a root subsystem of $\Sigma$.
We call an element $\mu\in\af^{*}$ a weight of $\Sigma(\lambda)$ if $\mu(\alpha^{\vee})\in\Z$ for every $\alpha\in\Sigma(\lambda)$. The set of weights of $\Sigma(\lambda)$ forms a lattice in $\af^{*}$ which contains $\re(\lambda)$.

Next we define an equivalence relation on $W$ by  $u\sim_{\lambda} v$ provided that  $uC$ and $vC$
are connected by a gallery of chambers $(uC=C_0,C_1,...,C_l=vC)$
such that for each $i$, $C_i$ and $C_{i+1}$ are separated by $H_{\beta_i}$
with $\beta_i\in\Sigma\backslash\Sigma(\lambda)$ an indivisible root for each $i$.

Let $\Sigma(\lambda)^{+}=\Sigma(\lambda)\cap\Sigma^{+}$.
We denote the closure of the corresponding positive chamber by $C(\lambda)\subseteq \af$.

\begin{lemma} \label{Eric1}
Let $\lambda\in \af_\C^*$. Then the following assertions hold:
\begin{enumerate}[(i)]
\item\label{Lemma Eric1 item 1} $C(\lambda)$ equals the union of the sets $w(C)$ where
$w$ runs over $[e]_{\lambda}$, the equivalence class of $e\in W$.
\item\label{Lemma Eric1 item 2} Let $\mu\in\af_{\C}^{*}$.
 Then $\lambda\sim\mu$ if and only if there exists a $w\in W$ with $w^{-1}\in [e]_{\lambda}$ such that $\mu=w\lambda$.
\end{enumerate}
\end{lemma}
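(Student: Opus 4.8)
The plan is to translate the statement into the language of the reflection arrangement of $\Sigma$. For a nonzero root $\beta$ write $H_\beta=\ker\beta\subseteq\af$, and set $\mathcal H_\lambda:=\{H_\beta:\beta\in\Sigma(\lambda)\}$. For $w\in W$ let $\operatorname{Sep}(w)$ denote the set of reflection hyperplanes separating $\operatorname{int}(C)$ from $\operatorname{int}(wC)$; explicitly, for $\beta\in\Sigma^+$ one has $H_\beta\in\operatorname{Sep}(w)$ iff $w^{-1}\beta\in\Sigma^-$. Two remarks on the possibly non-reduced case are convenient: since $(2\gamma)^\vee=\tfrac12\gamma^\vee$ we have $2\gamma\in\Sigma(\lambda)\Rightarrow\gamma\in\Sigma(\lambda)$, so $\mathcal H_\lambda=\{H_\gamma:\gamma\in\Sigma(\lambda)^+\text{ indivisible}\}$ and $C(\lambda)=\{x\in\af:\alpha(x)\ge0\text{ for all }\alpha\in\Sigma(\lambda)^+\}$ only involves indivisible roots. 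Being a full-dimensional closed convex cone bounded by reflection hyperplanes and containing $C$, the set $C(\lambda)$ is a union of closed chambers $wC$ (its generic points lie in chambers contained in it, and such points are dense); and $wC\subseteq C(\lambda)$ holds iff no $\delta\in\Sigma(\lambda)^+$ is negative on $wC$, i.e. iff $\operatorname{Sep}(w)\cap\mathcal H_\lambda=\emptyset$. Thus
\[
C(\lambda)=\bigcup\{\,wC:w\in W,\ \operatorname{Sep}(w)\cap\mathcal H_\lambda=\emptyset\,\},
\]
and (i) becomes the assertion $[e]_\lambda=\{w\in W:\operatorname{Sep}(w)\cap\mathcal H_\lambda=\emptyset\}$.

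\textbf{Proof of this assertion.} If $w\sim_\lambda e$, pick a gallery $C=C_0,\dots,C_l=wC$ crossing only hyperplanes $H_{\beta_i}$ with $\beta_i\in\Sigma\setminus\Sigma(\lambda)$ indivisible. Every $H\in\operatorname{Sep}(w)$ is crossed an odd, hence nonzero, number of times by any gallery from $C$ to $wC$, so $H$ occurs among the $H_{\beta_i}$ and therefore $H\notin\mathcal H_\lambda$. Conversely, suppose $\operatorname{Sep}(w)\cap\mathcal H_\lambda=\emptyset$ and choose a \emph{minimal} gallery from $C$ to $wC$. By the standard facts that the gallery distance of two chambers equals the corresponding Coxeter length and that a minimal gallery crosses each hyperplane in $\operatorname{Sep}(w)$ exactly once and crosses no others, this gallery crosses only hyperplanes of $\operatorname{Sep}(w)$, each of which is $H_\beta$ for an indivisible $\beta\notin\Sigma(\lambda)$; hence it exhibits $w\sim_\lambda e$. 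This proves (i).

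\textbf{Proof of (ii).} I would exhibit a dictionary between $\sim$-chains issuing from $\lambda$ and galleries issuing from $C$. Suppose $\lambda=\mu_0,\dots,\mu_l=\mu$ with $\mu_{i+1}=s_{\alpha_i}\mu_i$, $\alpha_i\in\Pi$, and $\mu_i(\alpha_i^\vee)\notin\Z$. Put $v_i:=s_{\alpha_0}\cdots s_{\alpha_{i-1}}$ (so $v_0=e$) and $w:=v_l^{-1}=s_{\alpha_{l-1}}\cdots s_{\alpha_0}$, so that $\mu=w\lambda$ and $v_l=w^{-1}$. The chambers $C_i:=v_iC$ form a gallery from $C$ to $w^{-1}C$ in which $C_i$ and $C_{i+1}$ are separated by $H_{v_i\alpha_i}$, where $v_i\alpha_i$ is indivisible (simple roots are indivisible, since $\tfrac12\alpha_i\in\Sigma$ would make $\alpha_i$ a sum of two positive roots, and $W$ preserves indivisibility). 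Using $(v_i\alpha_i)^\vee=v_i\alpha_i^\vee$ and $v_i^{-1}=s_{\alpha_{i-1}}\cdots s_{\alpha_0}$,
\[
\lambda\bigl((v_i\alpha_i)^\vee\bigr)=\bigl(v_i^{-1}\lambda\bigr)(\alpha_i^\vee)=\mu_i(\alpha_i^\vee)\notin\Z,
\]
so $v_i\alpha_i\notin\Sigma(\lambda)$; hence this gallery is of the type in the definition of $\sim_\lambda$, so $w^{-1}\in[e]_\lambda$. Conversely, if $\mu=w\lambda$ and $w^{-1}\in[e]_\lambda$, pick a gallery $C=C_0,\dots,C_l=w^{-1}C$ crossing only hyperplanes outside $\mathcal H_\lambda$; writing $C_i=v_iC$ gives $v_0=e$, $v_l=w^{-1}$, $v_{i+1}=v_is_{\alpha_i}$ for simple reflections $s_{\alpha_i}$, and the "outside $\mathcal H_\lambda$" condition reads $v_i\alpha_i\notin\Sigma(\lambda)$. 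Setting $\mu_i:=v_i^{-1}\lambda$ we obtain $\mu_0=\lambda$, $\mu_l=w\lambda=\mu$, $\mu_{i+1}=s_{\alpha_i}\mu_i$, and $\mu_i(\alpha_i^\vee)=\lambda\bigl((v_i\alpha_i)^\vee\bigr)\notin\Z$, which is exactly a $\sim$-chain from $\lambda$ to $\mu$. Hence $\lambda\sim\mu$, completing (ii).

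\textbf{Main difficulty.} The only genuine external input is the Coxeter-complex fact used in the converse half of the key assertion: a minimal gallery between two chambers crosses precisely the hyperplanes that separate them, each exactly once. Everything else is bookkeeping, and the part I expect to demand the most care is keeping conventions consistent — the passage between $w$ and $w^{-1}$, the reversed order of the products $s_{\alpha_0}\cdots s_{\alpha_{l-1}}$ versus $s_{\alpha_{l-1}}\cdots s_{\alpha_0}$, and, in the possibly non-reduced setting, systematically reducing to indivisible roots via $2\gamma\in\Sigma(\lambda)\Rightarrow\gamma\in\Sigma(\lambda)$.
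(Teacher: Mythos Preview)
Your proof is correct and, for part (ii), essentially identical to the paper's argument (same dictionary between $\sim$-chains and galleries, same bookkeeping with $v_i$ playing the role of the paper's $w_i^{-1}$).

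For part (i) you take a somewhat different route. The paper argues topologically: $C(\lambda)$ is the closure of the connected component of $\af\setminus\bigcup_{\alpha\in\Sigma(\lambda)}H_\alpha$ containing $\operatorname{int}(C)$, so the open chambers in $C(\lambda)$ are exactly those reachable from $C$ by crossing only non-$\Sigma(\lambda)$ walls (giving $C(\lambda)\subseteq D$), and one cannot exit $C(\lambda)$ without crossing a $\Sigma(\lambda)$-wall (giving $D\subseteq C(\lambda)$). You instead prove the combinatorial characterization $[e]_\lambda=\{w:\operatorname{Sep}(w)\cap\mathcal H_\lambda=\emptyset\}$ directly, invoking the Coxeter-complex fact that a minimal gallery crosses precisely the separating hyperplanes. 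Both arguments are standard; yours is more explicit about the external input (minimal galleries and separating walls), while the paper's is slightly more self-contained and geometric. Your careful handling of the non-reduced case via $2\gamma\in\Sigma(\lambda)\Rightarrow\gamma\in\Sigma(\lambda)$ is a point the paper leaves implicit.
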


\begin{proof} We start with the proof of (\ref{Lemma Eric1 item 1}).
Let $D$ be the union of the sets $w(C)$ where $w$ runs over $[e]_{\lambda}$.
By definition $C(\lambda)$ is the closure of a connected
component of the complement of the union
of the hyperplanes $H_{\alpha}$ with $\alpha \in \Sigma(\lambda)$, namely the connected component which contains $\Int(C)$.

Clearly $C(\lambda)$ is the closure of the union of the open chambers it contains. These are of the form $w(\Int(C))$, where $w$ varies over a subset of $[e]_{\lambda}$; indeed, the latter follows since the hyperplanes
intersecting $\Int\big(C(\lambda)\big)$ are hyperplanes of roots which are not
in $\Sigma(\lambda)$. Hence $C(\lambda)\subseteq D$, since $D$ is closed.
But clearly we can not extend any further beyond $C(\lambda)$ while staying
in $D$, since all the walls of $C(\lambda)$ are hyperplanes of roots in
$\Sigma(\lambda)$. Hence the equality is clear and (\ref{Lemma Eric1 item 1}) is established.

Moving on to (\ref{Lemma Eric1 item 2}), let $\lambda=\mu_0,\mu_1,\ldots ,\mu_l=\mu$ be a
sequence connecting $\lambda$ and $\mu=w\lambda$ such that
$\mu_{i+1}=s_i(\mu_i)$, with $s_i$ a reflection in a simple root $\alpha_{i}$, and $\mu_i(\alpha_i^\vee)\not\in \Z$  for all $i$.
Let $w_{0}=e$ and $w_{i+1}=s_iw_i$. Furthermore, let $\beta_i=w_i^{-1}(\alpha_i)$, so that $w_{i+1}=w_is_{\beta_i}$. Then
$\beta_i$ is an indivisible root and
$$
\lambda(\beta^\vee_i)=w_i(\lambda)(\alpha_i^\vee)=\mu_i(\alpha_i^\vee)\not\in \Z\, ,
$$
that is, $\beta_i\in\Sigma\backslash\Sigma(\lambda)$.
We may assume that  $w_{l}=w$.
Therefore, the gallery
$$
C,w_{1}^{-1}(C),w_{2}^{-1}(C),\ldots,w^{-1}(C)
$$
yields an equivalence $w^{-1}\sim_{\lambda} e$.
The converse is also true.
If the gallery $(C_0=C,C_1,\ldots,C_l=w^{-1}(C))$
defines an equivalence $e\sim_{\lambda} w^{-1}$, then $C_{i+1}=s_{\beta_i}(C_i)$
with $ \beta_i\in\Sigma\backslash\Sigma(\lambda)$ an indivisible root for all $i$.
Let $w_{i}\in W$ so that $C_{i}=w_{i}^{-1}C$, and $\mu_i:=w_i(\lambda)$.
Since $H_{\beta_{i}}$ is a common face of $C_i$ and $C_{i+1}$ (by definition
of gallery), we have $s_{\beta_i}w_i^{-1}=w_i^{-1}s_{\alpha_i}$ for some simple root
$\alpha_i=w_{i}\beta_{i}\in \Pi$. Note that
$$
\mu_i(\alpha_i^\vee)=w_i\lambda(\alpha_i^\vee)=\lambda(\beta_i^\vee)\not\in \Z\,.
$$
This implies that $\lambda$ and $w(\lambda)$ are equivalent and finishes the proof of (\ref{Lemma Eric1 item 2}).
\end{proof}

\subsection{Integral-negative parameters}\label{subsection integral-negative}
Let us call $\lambda\in\af_\C^*$ {\it weakly integral-negative} provided that
there exists a $\omega_\lambda \in \Span_\R (\Sigma(\lambda))$ and a subspace $\af_{\lambda}\subseteq\af$ such that
\begin{align*}
&(\re \lambda -  \omega_\lambda)\big|_{C}\leq0\,,\\
&(\re \lambda -  \omega_\lambda)\big|_{C\setminus\af_{\lambda}}<0\,.
\end{align*}
Further, we call  $\lambda\in\af_\C^*$ {\it integral-negative} provided that
there exists a $\omega_\lambda \in \Span_\R (\Sigma(\lambda))$  and a subspace $\af_\lambda\subseteq \af$  such that
\begin{align*}
&\lambda\big|_{\af_\lambda}= \re \lambda\big|_{\af_\lambda}\,,\\
&(\re \lambda -  \omega_\lambda)\big|_{C}\leq0 \, ,\\
&(\re \lambda -  \omega_\lambda)\big|_{C\bs \af_\lambda} <0 \, .
\end{align*}
Finally, we call $\lambda\in\af_\C^*$ {\it strictly integral-negative} if there exists a $\omega_\lambda \in \Span_\R (\Sigma(\lambda))$ such that
$$
(\re \lambda -  \omega_\lambda)\big|_{C\setminus\{0\}} <0 \, .
$$

\begin{rmk}\label{rmk parameter negative}
These definitions are motivated by our results from the previous section.
Let $\af\subseteq\gf$ and $\Sigma(\gf,\af)$ be as introduced in Section \ref{Notions},
and let $\Sigma^+$ be the positive system determined by the minimal parabolic subgroup $P$.
Let $(V,\eta)$ be a spherical pair and assume that there exists a quotient morphism
$\pi_{\lambda, \sigma}  \twoheadrightarrow V$ for some $\lambda\in\af^{*}_{\C}$ and $\sigma\in\hat M$.
Then from Corollary \ref{Cor negative and integral} we derive the following.
\begin{enumerate}[(i)]
\item\label{parameter negative item 1}  $2\lambda$ is weakly integral-negative
if $V$ belongs to the twisted discrete series for $Z$. In fact we may take $\af_{\lambda}$ and $\omega_{\lambda}$ to be equal to $\bfrak$ and $\omega$ as in Corollary \ref{Cor negative and integral}(\ref{Cor negative and integral item 1}).
\item \label{parameter negative item 2} $\lambda$ is integral-negative if $V$ belongs to the discrete series for $Z$.
\end{enumerate}
\end{rmk}

\begin{rmk}\label{rem strictly int-neg} Sometimes more is true for parameters of the discrete series and $\lambda$ is actually strictly
integral-negative.  This for example happens in the group case $Z=G\times G/ G \simeq G$.
\end{rmk}

Let us define the {\it edge} of $\lambda$ by
$$
\ef:=\ef(\lambda):=\{ X\in \af \mid (\forall \alpha\in \Sigma(\lambda))  \ \alpha(X)=0\}\, ,
$$
i.e., $\ef$ is the intersection of all faces of $C(\lambda)$.

Notice the orthogonal decomposition
\begin{equation}\label{decomposition with e}
\af = \ef \oplus \Span_\R \Sigma(\lambda)^\vee\, .
\end{equation}

\begin{theorem} \label{fundamental lemma} Let $\lambda \in \af_\C^*$. Then the following assertions hold:
\begin{enumerate}[(i)]
\item\label{fundamental lemma weakly int-neg}
    Suppose that $[\lambda]$ consists of weakly integral-negative parameters. Then there exists a $w\in W$ with $w^{-1}\sim_{\lambda} e$ such that $\ef\subseteq w^{-1}\af_{w\lambda}$. Moreover, $\re \lambda\big|_\ef=0$. Finally, there exists an $N\in \N$ only depending on $\Sigma$ such that $\re\lambda(\alpha^\vee)\in \frac{1}{N}\Z$ for all $\alpha\in\Sigma$.
\item\label{fundamental lemma int-neg}
    If $[\lambda]$ consists of  integral-negative parameters, then $\lambda\big|_{\ef}=0$. In particular,  $\lambda=\re\lambda$.
\item\label{fundamental lemma strictly int-neg}
    If $[\lambda]$ consists of strictly integral-negative parameters, then $\ef=\{0\}$. In particular, $\Sigma(\lambda)^\vee$ has full rank.
\end{enumerate}
\end{theorem}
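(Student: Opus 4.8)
The plan is to deduce all three assertions from one geometric fact, obtained by transporting the weakly-integral-negative inequality by the Weyl group and reading it off on the chambers making up $C(\lambda)$. Write $\ef=\ef(\lambda)$. By Lemma~\ref{Eric1}(\ref{Lemma Eric1 item 1}) we have $C(\lambda)=\bigcup_{u\in[e]_{\lambda}}u(C)$, and since $\ef$ is cut out by the vanishing of all roots in $\Sigma(\lambda)$, $\ef\subseteq C(\lambda)$. Fix $u\in[e]_{\lambda}$. By Lemma~\ref{Eric1}(\ref{Lemma Eric1 item 2}) we have $u^{-1}\lambda\in[\lambda]$, and from $(u^{-1}\lambda)(\alpha^{\vee})=\lambda((u\alpha)^{\vee})$ we get $\Sigma(u^{-1}\lambda)=u^{-1}\Sigma(\lambda)$. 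Applying $u$ to the weakly-integral-negative data for $u^{-1}\lambda$ (which exists because $[\lambda]$ consists of such parameters) produces a functional $\mu_{u}:=u\,\omega_{u^{-1}\lambda}\in\Span_{\R}\Sigma(\lambda)$ and a subspace $V_{u}:=u\,\af_{u^{-1}\lambda}\subseteq\af$ such that $\re\lambda\le\mu_{u}$ on $u(C)$ and $\re\lambda<\mu_{u}$ on $u(C)\setminus V_{u}$. Note that $\mu_{u}$ vanishes on $\ef$.

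From here the first two assertions of (\ref{fundamental lemma weakly int-neg}) are immediate. For $X\in\ef$, choosing $u$ with $X\in u(C)$ gives $\re\lambda(X)\le\mu_{u}(X)=0$; applying this to $-X\in\ef$ too gives $\re\lambda|_{\ef}=0$. Consequently, if $X\in\ef\cap u(C)$ then $\re\lambda(X)=0=\mu_{u}(X)$, so the strict inequality off $V_{u}$ forces $X\in V_{u}$; hence $\ef\cap u(C)\subseteq V_{u}$. Taking the union over the finite set $[e]_{\lambda}$ displays $\ef$ as a finite union of the subspaces $V_{u}\cap\ef$, so $\ef\subseteq V_{u_{0}}$ for some $u_{0}$, and with $w:=u_{0}^{-1}$ we get $w^{-1}\sim_{\lambda}e$ and $\ef\subseteq u_{0}\af_{u_{0}^{-1}\lambda}=w^{-1}\af_{w\lambda}$. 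For the integrality statement: $\re\lambda|_{\ef}=0$ together with the fact that $\ef$ is exactly the common kernel of $\Sigma(\lambda)$ forces $\re\lambda\in\Span_{\R}\Sigma(\lambda)$, so $\re\lambda$ is integral on every coroot $\alpha^{\vee}$ with $\alpha\in\Sigma(\lambda)$, i.e.\ on the root lattice of the root subsystem $\Sigma(\lambda)^{\vee}\subseteq\Sigma^{\vee}$. The orthogonal projection $\pi\colon\af\to\Span_{\R}\Sigma(\lambda)^{\vee}$ has kernel $\ef$, and for $\gamma\in\Sigma$ one has $\langle\alpha,\pi(\gamma^{\vee})\rangle=\langle\alpha,\gamma^{\vee}\rangle\in\Z$ for all $\alpha\in\Sigma(\lambda)$, so $\pi(\gamma^{\vee})$ lies in the coweight lattice of $\Sigma(\lambda)^{\vee}$. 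Choosing $N=N(\Sigma)\in\N$ with the property that $N$ times the coweight lattice of each root subsystem of $\Sigma^{\vee}$ is contained in its root lattice — possible since there are only finitely many such subsystems — we obtain $\re\lambda(\gamma^{\vee})\in\tfrac{1}{N}\Z$, which finishes (\ref{fundamental lemma weakly int-neg}).

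The remaining parts come out of the same machine. For (\ref{fundamental lemma int-neg}): integral-negative parameters are weakly integral-negative, so part (\ref{fundamental lemma weakly int-neg}) yields $w$ with $w^{-1}\sim_{\lambda}e$ and $\ef\subseteq w^{-1}\af_{w\lambda}$; since $w\lambda\in[\lambda]$ is integral-negative, $\im(w\lambda)$ vanishes on $\af_{w\lambda}$, hence $\im\lambda$ vanishes on $\ef$, and with $\re\lambda|_{\ef}=0$ this gives $\lambda|_{\ef}=0$; as $\im\lambda$ also vanishes on $\Span_{\R}\Sigma(\lambda)^{\vee}$ (because $\lambda(\alpha^{\vee})\in\Z$ for $\alpha\in\Sigma(\lambda)$), we conclude $\lambda=\re\lambda$. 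For (\ref{fundamental lemma strictly int-neg}): a strictly integral-negative parameter is weakly integral-negative with the admissible choice $\af_{\lambda}=\{0\}$; making this choice uniformly along $[\lambda]$ forces $V_{u}=\{0\}$ for every $u\in[e]_{\lambda}$, so the inclusion $\ef\cap u(C)\subseteq V_{u}$ becomes $\ef\cap u(C)=\{0\}$, whence $\ef=\{0\}$ and $\Sigma(\lambda)^{\vee}$ has full rank.

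The main obstacle is the transport step of the first paragraph and its use in the second: one must recognize that the weakly-integral-negative condition for the conjugate $u^{-1}\lambda$ is precisely an inequality $\re\lambda\le\mu_{u}$ on the chamber $u(C)\subseteq C(\lambda)$ which becomes an \emph{equality} $=0$ along $\ef$, so that the exceptional locus on $\ef$ is trapped in $V_{u}$; after that the elementary fact that a real vector space is not a finite union of proper subspaces does the gluing. The integrality assertion in (\ref{fundamental lemma weakly int-neg}) is a separate, more computational point, handled by the bounded index between the coweight and root lattices of the sub-root-system $\Sigma(\lambda)^{\vee}$.
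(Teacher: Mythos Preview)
Your proof is correct and follows essentially the same strategy as the paper: transport the weakly-integral-negative inequalities by the elements of $[e]_{\lambda}$ to cover $C(\lambda)$, exploit the $\pm$-symmetry of $\ef$ to get $\re\lambda|_{\ef}=0$, and then use that a real vector space is not a finite union of proper subspaces to trap $\ef$ in a single $V_{u_{0}}$. The paper packages the transported functionals into a single function $f(X)=\max_{w^{-1}\sim_{\lambda}e} w^{-1}\omega_{w\lambda}(X)$ rather than working chamber by chamber, but this is purely cosmetic. For the integrality clause in (\ref{fundamental lemma weakly int-neg}) you argue dually to the paper: you project each $\gamma^{\vee}$ into $\Span_{\R}\Sigma(\lambda)^{\vee}$ and bound the index of the coroot lattice in the weight lattice of $\Sigma(\lambda)^{\vee}$, whereas the paper places $\re\lambda$ in the weight lattice of $\Sigma(\lambda)$ and bounds the index of the root lattice of the parabolic closure $\Sigma_{P}(\lambda)$ therein; both routes use the finiteness of root subsystems and yield an $N$ depending only on $\Sigma$. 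One small wording point: your ``so $\re\lambda$ is integral on every coroot $\alpha^{\vee}$ with $\alpha\in\Sigma(\lambda)$'' comes straight from the definition of $\Sigma(\lambda)$, not from the preceding clause; and ``coweight lattice of $\Sigma(\lambda)^{\vee}$'' is better said as ``weight lattice of $\Sigma(\lambda)^{\vee}$'' (equivalently, the coweight lattice of $\Sigma(\lambda)$).
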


\begin{proof}
We start with (\ref{fundamental lemma weakly int-neg}).  Let $\mu \in [\lambda]$, that is $\mu=w \lambda$ for some $w\in W$
with $w^{-1}\sim_{\lambda} e$ by Lemma \ref{Eric1}(\ref{Lemma Eric1 item 2}).
Since $\mu$ is weakly integral-negative there exists a subspace $\af_{\mu}$ of $\af$ and an $\omega_\mu \in \Span_\R \Sigma(\mu)$ such that
$(\re \mu - \omega_\mu)\big|_{C\setminus\af_{\mu}}< 0$ and $(\re \mu -  \omega_\mu)\big|_{C}\leq0$.
The latter conditions are equivalent to $(\re \lambda - w^{-1}\omega_\mu)\big|_{w^{-1}C\setminus w^{-1}\af_{\mu}}<0$ and $(\re \lambda -  w^{-1}\omega_\mu)\big|_{w^{-1}C}\leq0$.
Now define a function $f:\af\to \R$ by
$$
f(X):=\max_{w^{-1}\sim_{\lambda} e} w^{-1}\omega_{w\lambda}(X)\qquad (X\in \af)\, .
$$
By Lemma \ref{Eric1}(\ref{Lemma Eric1 item 1}) we have $C(\lambda)= \bigcup_{w^{-1}\sim_{\lambda} e}  w^{-1}C$, and thus
\begin{align}
\label{eq re lambda-f<0 on most of C(lambda)}
    &\big(\re \lambda-f\big)\big|_{C(\lambda)\setminus\bigcup_{w^{-1}\sim_{\lambda} e}w^{-1}\af_{w\lambda}}<0\, ,\\
\label{eq re lambda-fleq0 on  C(lambda)}
    &\big(\re \lambda-f\big)\big|_{C(\lambda)}\leq0\, .
\end{align}
Recall that $\ef$ is the intersection of all faces of $C(\lambda)$.
Since $w\Sigma(\lambda)^\vee = \Sigma(w\lambda)^\vee$ for every $w\in W$, we have $w^{-1}\omega_{w\lambda} \in \Span_{\R}\big(\Sigma(\lambda)\big)$. It follows that $w^{-1}\omega_{w\lambda}\big|_{\ef}=0$ and thus $f\big|_{\ef}=0$.
Since $\ef\bs \bigcup_{w^{-1}\sim_{\lambda} e }w^{-1}\af_{w\lambda}$ is invariant under multiplication by $-1$, it follows from (\ref{eq re lambda-f<0 on most of C(lambda)}) that
$\ef\subseteq \bigcup_{w^{-1}\sim_{\lambda} e }w^{-1}\af_{w\lambda}$. Hence $\ef\subseteq w^{-1}\af_{w\lambda}$ for some $w\in W$ with $w^{-1}\sim_{\lambda} e$. It now follows from (\ref{eq re lambda-fleq0 on  C(lambda)}) that $\re \lambda\big|_{\ef}=0$.

We call a root subsystem $\Sigma'$ of $\Sigma$ parabolic if $\Sigma'$ is the intersection of $\Sigma$ with a subspace.
Let $\Sigma_P(\lambda)\subseteq\Sigma$ be the parabolic closure of
$\Sigma(\lambda)\subseteq \Sigma$, i.e., the smallest parabolic
root subsystem of $\Sigma$ containing $\Sigma(\lambda)$. Then
$\Sigma_P(\lambda)=\ef^\perp\cap\Sigma$, and $\Sigma(\lambda)^{\vee}\subseteq\Sigma_P(\lambda)^{\vee}$ is a root subsystem of maximal rank of
the corresponding dual parabolic subsystem $\Sigma_P(\lambda)^{\vee}$ of
$\Sigma^{\vee}$. By the above, $\re(\lambda)\in \ef^\perp$, and by definition of $\Sigma(\lambda)$, $\re(\lambda)$ is a weight of $\Sigma(\lambda)$.

Let $N$ be the index of the root lattice of $\Sigma_P(\lambda)$ in
the weight lattice of $\Sigma(\lambda)$ (which is a lattice containing
the weight lattice of $\Sigma_P(\lambda)$).
Then $N \re(\lambda)$ is in the root lattice of $\Sigma_P(\lambda)$ and
thus, a fortiori, in the root lattice of $\Sigma$. In particular, $N\re(\lambda)$ is integral
for $\Sigma$ (i.e., as a functional on $\Sigma^{\vee}$).

Since there are only finitely many root subsystems of maximal rank
in any given root system, and only finitely many parabolic root
subsystems, we see that we can choose the bound $N\in\mathbb{N}$
independent of $\lambda$ (only depending on $\Sigma$).
This completes the proof of (\ref{fundamental lemma weakly int-neg}).

We move on to (\ref{fundamental lemma int-neg}). From (i) it follows that there exists a $w\in W$ with $w^{-1}\sim_{\lambda} e$ such that $\ef\subseteq w^{-1}\af_{w\lambda}$.
Now $\lambda(\ef) \subseteq \lambda(w^{-1}\af_{w\lambda}) =w\lambda(\af_{w\lambda})\subseteq \R$. It follows that $\lambda\big|_{\ef}$ is
real and thus $\lambda\big|_{\ef}=0$ by (\ref{fundamental lemma weakly int-neg}).
It then follows from (\ref{decomposition with e}) that $\lambda=\re\lambda$.

Finally for (\ref{fundamental lemma strictly int-neg}) we observe that $[\lambda]$ being strictly integral-negative
implies, as above, $\re \lambda(X) < f(X) $ for all $X \in C(\lambda)\setminus\{0\}$ and therefore
$\re \lambda\big|_{\ef^\times}<0$. The latter forces $\ef^\times=\emptyset$, i.e., $\ef=\{0\}$.
\end{proof}

\subsection{Additional results}
The assertions in this subsection are of independent interest, but not needed in the remainder of this article.

Given a full rank subsystem $(\Sigma^0)^\vee$ of $\Sigma^\vee$ we note that $\Z[(\Sigma^0)^\vee]$ has finite index
in the full co-root lattice $\Z[\Sigma^\vee]$ and thus
$$
\Z[\Sigma^\vee]/ \Z[(\Sigma^0)^\vee] \simeq \bigoplus_{j=1}^r \Z/ d_j \Z
$$
for $d_j\in \N$.  Set
$N(\Sigma^0):=\operatorname{lcm}\{ d_1, \ldots, d_r\}$ and note that $N(\Sigma^0)\alpha^\vee \in \Z[(\Sigma^0)^\vee]$ for all
$\alpha\in \Sigma$.

The following corollary is particularly relevant for the group case $Z=G\times G/G$. See Remark \ref{rem strictly int-neg}.

\begin{cor}\label{Cor strictly int-neg parameters}
Let $\lambda\in \af_\C^*$ be such that $[\lambda]$ consist of strictly integral-negative parameters. Then
$$
\lambda(\alpha^\vee)\in \frac{1}{N(\Sigma(\lambda))}\Z \qquad (\alpha\in \Sigma)\, .
$$
\end{cor}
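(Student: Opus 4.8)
The plan is to deduce the statement directly from Theorem \ref{fundamental lemma}(\ref{fundamental lemma strictly int-neg}) together with the definition of $N(\Sigma^0)$ recalled just above the corollary; no new ideas are needed, only an assembly of what is already available.

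First I would observe that the hypothesis on $[\lambda]$ makes $N(\Sigma(\lambda))$ well-defined. By Theorem \ref{fundamental lemma}(\ref{fundamental lemma strictly int-neg}) the edge $\ef=\ef(\lambda)$ is trivial, so the orthogonal decomposition (\ref{decomposition with e}) gives $\af=\Span_\R\Sigma(\lambda)^\vee$. Thus $\Sigma(\lambda)^\vee$ is a full-rank root subsystem of $\Sigma^\vee$, which is exactly the condition under which $\Z[\Sigma(\lambda)^\vee]$ has finite index in the co-root lattice $\Z[\Sigma^\vee]$, and hence under which $N(\Sigma(\lambda))$ is defined.

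Next, the only property of this integer I would use is the one stated at its definition: $N(\Sigma(\lambda))\,\alpha^\vee\in\Z[\Sigma(\lambda)^\vee]$ for every $\alpha\in\Sigma$. Fixing $\alpha\in\Sigma$, I would write $N(\Sigma(\lambda))\,\alpha^\vee=\sum_i n_i\beta_i^\vee$ with $n_i\in\Z$ and $\beta_i\in\Sigma(\lambda)$, pair this with $\lambda$, and use that $\lambda(\beta_i^\vee)\in\Z$ for each $i$ — which is immediate from the definition $\Sigma(\lambda)=\{\beta\in\Sigma\mid\lambda(\beta^\vee)\in\Z\}$ — to obtain
$$
N(\Sigma(\lambda))\,\lambda(\alpha^\vee)=\sum_i n_i\,\lambda(\beta_i^\vee)\in\Z,
$$
that is, $\lambda(\alpha^\vee)\in\frac{1}{N(\Sigma(\lambda))}\Z$. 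Since $\alpha\in\Sigma$ was arbitrary, this is the assertion.

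There is essentially no obstacle: all the real work has been carried out in Theorem \ref{fundamental lemma}(\ref{fundamental lemma strictly int-neg}), whose sole role here is to supply the full-rank condition on $\Sigma(\lambda)^\vee$ that legitimizes the definition of $N(\Sigma(\lambda))$; the remainder is a routine lattice-pairing computation. (If desired, one may additionally note via Theorem \ref{fundamental lemma}(\ref{fundamental lemma int-neg}), applied to an arbitrary member of $[\lambda]$ — each strictly integral-negative parameter being integral-negative with trivial $\af_\lambda$ — that $\lambda$ is real; but realness plays no role in the conclusion above.)
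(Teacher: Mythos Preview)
Your argument is correct and is exactly the intended one: the paper states the corollary without proof, regarding it as immediate from Theorem \ref{fundamental lemma}(\ref{fundamental lemma strictly int-neg}) and the defining property $N(\Sigma^0)\alpha^\vee\in\Z[(\Sigma^0)^\vee]$ recorded just above, which is precisely what you spell out.
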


Note that
$$
N_\Sigma:=\operatorname{lcm}\{ N(\Sigma^0)\mid \hbox{ $(\Sigma^0)^\vee$ is full rank subsystem of $\Sigma^\vee$}\}\,.
$$
is finite as there are only finitely many full rank subsystems of $\Sigma^\vee$. Therefore, $N_{\Sigma}$ is an upper bound for the indices $N(\Sigma(\lambda))$ which only depends on $\Sigma$.

\begin{rmk}
Full rank subsystems can be described by repeated applications of
the "Borel-de Siebenthal" theorem. That is: The maximal such
subsystems are obtained by removing a node from the affine extended
root system (and we can repeat this procedure to obtain the non maximal
cases).

In type $A_n$, there are no proper subsystems of this type, since
the affine extension is a cycle, so removing a node will again
yield $A_n$. Hence if  $\Sigma$ is of type $A_n$, then the condition that $[\lambda]$ consists of strictly integral-negative parameters implies that $[\lambda]=\{\lambda\}$, and $\lambda$ is integral on
all coroots.
\end{rmk}

\section{Integrality properties of leading exponents of twisted discrete series}

For every $\alpha\in\Pi$ and $\lambda\in\af_\C^*$ we set $\lambda_\alpha:=s_\alpha(\lambda)$
and $\sigma_\alpha:= \sigma\circ s_\alpha$.  Further we let
$ I_\alpha(\lambda):  \pi_{\lambda_\alpha, \sigma_\alpha}^\infty   \to \pi_{\lambda, \sigma}^\infty$
be the rank one intertwining operator.  If we identify the space of smooth vectors of
$\pi_{\lambda,\sigma}$ with $C^\infty (K\times_M V_\sigma)$ then the assignment
$$
\af_\C^* \to \End(C^\infty(K\times_M V_\sigma)), \ \ \lambda\mapsto I_\alpha(\lambda)
$$
is meromorphic.
In the appendix we prove:
\begin{lemma}  \label{rank one intertwiners}
There exists a constant $N\in \N$ only depending on $G$ with the following property: If $\lambda(\alpha^\vee)\not \in \frac{1}{N} \Z$, then $I_\alpha(\lambda)$ is an isomorphism.
\end{lemma}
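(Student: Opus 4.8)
The plan is to reduce the statement to a rank-one computation and then to the classical explicit formula for the Harish-Chandra $c$-function of a real reductive group of split rank one. First I would fix $\alpha\in\Pi$ and pass to the standard parabolic $Q_\alpha\supseteq P$ whose Levi $M_\alpha A_{Q_\alpha}$ is the smallest one containing the root space $\gf^\alpha$. Here the group $M_\alpha$ has split rank one, $P\cap M_\alpha$ is a minimal parabolic of $M_\alpha$ with Levi $M\cdot\exp(\R\alpha^\vee)$, and by induction in stages one has
$$
\pi_{\lambda,\sigma}\cong\Ind_{Q_\alpha}^{G}\Big(\Ind_{P\cap M_\alpha}^{M_\alpha}\big(\lambda\otimes\sigma\big)\otimes\lambda\big|_{\ker\alpha}\Big),
$$
under which $I_\alpha(\lambda)$ is the image, by the exact functor $\Ind_{Q_\alpha}^G$, of the rank-one standard intertwining operator $i_\alpha(\lambda)$ of $M_\alpha$ attached to $s_\alpha$; the restriction $\lambda|_{\ker\alpha}$ is $s_\alpha$-fixed and is simply carried along. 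Since $\Ind_{Q_\alpha}^G$ carries isomorphisms to isomorphisms, it suffices to prove the lemma with $G$ replaced by $M_\alpha$, so I would from now on assume $\dim\af=1$.

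In the rank-one case I would first recall that $\pi_{\lambda,\sigma}$ is irreducible for $\lambda$ outside a locally finite union of affine hyperplanes (Bruhat, Harish-Chandra), so that by Schur's lemma the $G$-endomorphism $I_\alpha(\lambda)\circ I_\alpha(\lambda_\alpha)$ of $\pi_{\lambda,\sigma}$ is a scalar $c(\lambda,\sigma)\cdot\mathrm{id}$; as an identity of meromorphic families this then holds for all $\lambda$, and likewise $I_\alpha(\lambda_\alpha)\circ I_\alpha(\lambda)=c(\lambda_\alpha,\sigma_\alpha)\cdot\mathrm{id}$. The scalar $c(\lambda,\sigma)$ is, up to a nonzero constant, the inverse of the Plancherel density of the minimal principal series of the rank-one group, and the classical explicit computation --- Harish-Chandra in the spherical case, and Cohn, Schiffmann and others in general --- expresses it as a finite product of ratios of values $\Gamma\!\big(\tfrac12(a+b\,\lambda(\alpha^\vee)+c_\sigma)\big)$, where $b\in\{\pm1\}$, the constants $a$ are fixed by the root multiplicities $\dim\gf^\alpha$ and $\dim\gf^{2\alpha}$, and $c_\sigma$ is the representation-theoretic parameter through which $\sigma$ enters. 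The essential point --- and the step I expect to be the main obstacle --- is to check that $c_\sigma$ always lies in a fixed fractional lattice $\tfrac1{N_0}\Z$ as $\sigma$ ranges over all of $\hat M$ (a possibly disconnected compact group); this one verifies case by case, using the classification of split-rank-one real reductive groups, which is a finite list, together with the known shape of their (also non-spherical) $c$-functions. Granting it, the divisor of the meromorphic function $z\mapsto c(\lambda,\sigma)$ of the single variable $z=\lambda(\alpha^\vee)$ is contained in $\tfrac1N\Z$ for an integer $N$ depending only on $M_\alpha$; enlarging $N$ if needed, I may assume in addition that $I_\alpha(\lambda)$ is holomorphic whenever $\lambda(\alpha^\vee)\notin\tfrac1N\Z$, since the poles of $I_\alpha$ lie among those of its scalar normalizing factor, which has the same shape. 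Taking $N(G)$ to be the least common multiple of these integers over the finitely many $\alpha\in\Pi$ yields the constant in the statement.

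With this in hand the conclusion is short. Assume $\lambda(\alpha^\vee)\notin\tfrac1N\Z$. Then $\lambda_\alpha(\alpha^\vee)=-\lambda(\alpha^\vee)\notin\tfrac1N\Z$ and $\sigma_\alpha=\sigma\circ s_\alpha\in\hat M$, so both $I_\alpha(\lambda)$ and $I_\alpha(\lambda_\alpha)$ are holomorphic at the relevant points, and the scalars $c(\lambda,\sigma)$, $c(\lambda_\alpha,\sigma_\alpha)$ are finite and nonzero. From $I_\alpha(\lambda)\circ I_\alpha(\lambda_\alpha)=c(\lambda,\sigma)\cdot\mathrm{id}$ and $I_\alpha(\lambda_\alpha)\circ I_\alpha(\lambda)=c(\lambda_\alpha,\sigma_\alpha)\cdot\mathrm{id}$ it follows that $c(\lambda,\sigma)^{-1}I_\alpha(\lambda_\alpha)$ is a two-sided inverse of $I_\alpha(\lambda)$, hence $I_\alpha(\lambda)$ is an isomorphism of the underlying Harish-Chandra modules; since the smooth moderate-growth globalization functor is exact and fully faithful (Casselman--Wallach), $I_\alpha(\lambda)\colon\pi_{\lambda_\alpha,\sigma_\alpha}^\infty\to\pi_{\lambda,\sigma}^\infty$ is then an isomorphism of Fr\'echet representations. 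Aside from the uniformity of $N$ in $\sigma$, the only routine matters to check are the bookkeeping in the induction-in-stages reduction and the identification of $\Ind_{Q_\alpha}^G(i_\alpha(\lambda))$ with $I_\alpha(\lambda)$ via the defining integrals of the standard intertwiners.
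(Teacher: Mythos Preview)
Your strategy is sound and would work, but it differs from the paper's argument in the rank-one core. Both proofs reduce to a split-rank-one group (you via induction in stages, the paper via the Knapp--Stein identity $I_\alpha(\sigma,\lambda)f(e)=I^0_\alpha(\sigma_\alpha,\lambda_\alpha)(f|_{G_\alpha})(e)$), but then diverge. You invoke the explicit Gamma-factor form of the Harish-Chandra $c$-function and argue that the $\sigma$-dependent shifts $c_\sigma$ live in a fixed lattice $\tfrac{1}{N_0}\Z$, which you acknowledge requires a case-by-case pass through the rank-one classification. The paper instead appeals to the Speh--Vogan reducibility criterion: if $\pi_{\nu,\sigma}$ is reducible then there is a root $\beta$ of $(\gf_\C,\hf_\C)$ with $\gamma(\beta^\vee)\in\Z$ (where $\gamma=(\xi,\nu)$, $\xi$ the infinitesimal character of $\sigma$), and a short weight-lattice computation gives the explicit uniform bound $N=18d^2$ with $d$ the determinant of the Cartan matrix of $\mf$. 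Then, since a non-injective or non-surjective $I_\alpha$ forces the rank-one principal series to be reducible, one is done.

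The trade-off: the paper's route avoids the case-by-case verification entirely and delivers a concrete $N$ from general structure theory, while your route is more analytic and would in principle give sharper information on the exact divisor of the $c$-function, at the price of the classification check you flag as the main obstacle. Your treatment of holomorphy and of the two-sided inverse via the scalar identity is fine; the paper handles holomorphy by citing Knapp--Stein (poles at $\lambda(\alpha^\vee)\in -\N_0$) and nonvanishing by an explicit test vector supported in the open Bruhat cell.
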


Combining Lemma \ref{rank one intertwiners} with Remark \ref{rmk parameter negative} we obtain:

\begin{cor}\label{intertwiner}
Let $N\in\N$ be as in Lemma \ref{rank one intertwiners}.
Let $(V,\eta)$ be a representation of the twisted discrete series
and $\pi_{\lambda, \sigma}\twoheadrightarrow V$ a quotient morphism.
Then the equivalence class $[2N\lambda]$ consists of weakly integral-negative parameters.
If moreover $(V,\eta)$ belongs to the discrete series, then $[2N\lambda]$ consists of
integral-negative parameters.
\end{cor}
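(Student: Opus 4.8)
The plan is to propagate the given quotient morphism $\pi_{\lambda,\sigma}\twoheadrightarrow V$ along chains of simple reflections, keeping it alive by means of Lemma \ref{rank one intertwiners}, and then to invoke Remark \ref{rmk parameter negative}. Throughout, $N$ is the constant of Lemma \ref{rank one intertwiners}, and the extra factor $2$ in $2N\lambda$ accounts for the $2\lambda$ appearing in Remark \ref{rmk parameter negative}(\ref{parameter negative item 1}).

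First I would record the elementary scaling observation: if $\nu\in\af_\C^*$ is weakly integral-negative (resp.\ integral-negative) and $m\in\N$, then so is $m\nu$. Indeed $\nu(\alpha^\vee)\in\Z$ forces $(m\nu)(\alpha^\vee)\in\Z$, so $\Sigma(\nu)\subseteq\Sigma(m\nu)$ and $\Span_\R(\Sigma(\nu))\subseteq\Span_\R(\Sigma(m\nu))$; hence $m\omega_\nu$ serves as the required element of $\Span_\R(\Sigma(m\nu))$, and since the conditions $(\re\nu-\omega_\nu)|_C\leq 0$ and $(\re\nu-\omega_\nu)|_{C\setminus\af_\nu}<0$ are homogeneous they survive multiplication by $m>0$, with the same subspace $\af_\nu$. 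The extra reality condition $\nu|_{\af_\nu}=\re\nu|_{\af_\nu}$ for integral-negativity is likewise homogeneous.

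Next, I would take an arbitrary $\mu\in[2N\lambda]$ together with a connecting sequence $2N\lambda=\mu_0,\dots,\mu_l=\mu$, where $\mu_{i+1}=s_{\alpha_i}(\mu_i)$ for simple roots $\alpha_i\in\Pi$ and $\mu_i(\alpha_i^\vee)\notin\Z$, and put $\lambda_i:=\tfrac{1}{2N}\mu_i$, so that $\lambda_0=\lambda$ and $\lambda_l=\tfrac{1}{2N}\mu$. The condition $\mu_i(\alpha_i^\vee)\notin\Z$ means $\lambda_i(\alpha_i^\vee)\notin\tfrac{1}{2N}\Z$, hence a fortiori $\lambda_i(\alpha_i^\vee)\notin\tfrac{1}{N}\Z$; therefore Lemma \ref{rank one intertwiners} makes $I_{\alpha_i}(\lambda_i)\colon\pi_{\lambda_{i+1},\,\sigma_i\circ s_{\alpha_i}}^\infty\to\pi_{\lambda_i,\sigma_i}^\infty$ an isomorphism, where $\sigma_0:=\sigma$ and $\sigma_{i+1}:=\sigma_i\circ s_{\alpha_i}$. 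An induction on $i$, composing this isomorphism with the quotient morphism $\pi_{\lambda_i,\sigma_i}\twoheadrightarrow V$ produced at the previous step, then yields a quotient morphism $\pi_{\lambda_l,\sigma_l}\twoheadrightarrow V$ whose parameter is $\lambda_l=\tfrac{1}{2N}\mu$. Since $(V,\eta)$ belongs to the twisted discrete series, Remark \ref{rmk parameter negative}(\ref{parameter negative item 1}) applied to this quotient gives that $2\lambda_l$ is weakly integral-negative, whence by the scaling observation $\mu=2N\lambda_l=N\cdot(2\lambda_l)$ is weakly integral-negative. As $\mu$ was an arbitrary element of $[2N\lambda]$, the class consists of weakly integral-negative parameters. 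The discrete-series assertion is identical: one uses instead Remark \ref{rmk parameter negative}(\ref{parameter negative item 2}) (which gives $\lambda_l$ itself integral-negative) and then scales by $m=2N$.

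The only genuinely delicate point is the propagation step: the rank one intertwiner $I_{\alpha_i}(\lambda_i)$ is a priori only a map between smooth moderate-growth Fréchet globalizations, whereas the quotient morphisms of Lemma \ref{disc cover} and of Remark \ref{rmk parameter negative} are morphisms of Harish-Chandra modules. One resolves this by observing that a $G$-equivariant topological isomorphism of such globalizations restricts to an isomorphism of the underlying Harish-Chandra modules, because it carries each finite-dimensional $K$-isotypic component isomorphically onto the corresponding one; hence the composition with $\pi_{\lambda_i,\sigma_i}\twoheadrightarrow V$ can be performed at the level of Harish-Chandra modules and remains within the hypotheses of Remark \ref{rmk parameter negative}. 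Everything else is bookkeeping.
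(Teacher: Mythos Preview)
Your proof is correct and follows essentially the same approach as the paper: propagate the quotient morphism through the chain of simple reflections using Lemma~\ref{rank one intertwiners}, then invoke Remark~\ref{rmk parameter negative} at the endpoint and scale. The paper's argument is terser (it does not spell out the scaling observation or the passage between smooth globalizations and Harish-Chandra modules), but the content is the same.
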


\begin{proof}
If $\alpha\in \Pi$ and $\alpha \not \in \Sigma(2N\lambda)$, then $I_{\alpha}(\lambda)$ is an isomorphism
by Lemma \ref{rank one intertwiners}. Therefore the composition of $I_{\alpha}(\lambda)$ with the quotient
morphism $\pi_{\lambda,\sigma}\twoheadrightarrow V$ gives a quotient morphism
$\pi_{\lambda_{\alpha},\sigma_{\alpha}}\twoheadrightarrow V$. It then follows from
Remark \ref{rmk parameter negative}(\ref{parameter negative item 1}) that $2\lambda_{\alpha}$
and thus also $2N\lambda_{\alpha}$ is weakly integral-negative. By repeating this argument
we obtain that the equivalence class $[2N\lambda]$ consists of weakly integral-negative elements.
If $(V,\eta)$ belongs to the discrete series, then we use (\ref{parameter negative item 2}) in
Remark \ref{rmk parameter negative} instead of (\ref{parameter negative item 1}).
\end{proof}

Recall the set of spherical roots $S\subseteq \af_Z^*$ and recall that $S\subseteq \Z[\Sigma]$.
Let $\chi\in(\hat\hf/\hf)^{*}_{\C}$ be normalized unitary and let $\mu\in\af_Z^*$ be a leading exponent
of a $\chi$-twisted discrete series representation $(V,\eta)$.
Then we know from (\ref{DS1}), (\ref{DS2}), and (\ref{DS3}) that we may expand $\mu$ as
\begin{equation}\label{mu expansion}
\mu=\rho_Q+ \sum_{\alpha\in S} c_\alpha \alpha + i\nu  \qquad (c_\alpha\in \R)\, .
\end{equation}
with
\begin{enumerate}[(a)]
\item $c_\alpha > 0$ for all $\alpha\in S$,
\item $\nu\in\af_{Z}^{*}$ with $\nu\big|_{\af_{Z,E}}=\im\chi\big|_{\af_{Z,E}}$.
\end{enumerate}

\begin{theorem}\label{main thm}
Let $Z=G/H$ be a unimodular real spherical space.
There exists an $N\in\N$ and for every normalized unitary $\chi\in(\hat\hf/\hf)^{*}_{\C}$ a finite set ${\mathfrak Y}_{\chi}\subseteq \af^{*}$ with the following property.
Let $(V,\eta)$ be a spherical pair corresponding to a $\chi$-twisted discrete series representation and let $\mu$ be any leading exponent of $(V,\eta)$, which we expand as $\mu=\rho_{Q}+\sum_{\alpha\in S} c_\alpha \alpha+i\nu$ as in (\ref{mu expansion}).
Then the following hold.
\begin{enumerate}[(i)]
\item\label{Main thm item 1} $c_\alpha\in \frac{1}{N} \N$ for all $\alpha\in S$ and $\nu\in {\mathfrak Y}_{\chi}$.
\item\label{Main thm item 2} If in addition $(V,\eta)$ belongs to the discrete series, then $\nu=0$, i.e., $\mu \in \af_Z^*$.  In particular, the infinitesimal
character of $V$ is real.
\end{enumerate}
\end{theorem}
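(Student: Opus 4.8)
The plan is to pass through the surjection $\pi_{\lambda,\sigma}\twoheadrightarrow V$ furnished by Lemma \ref{disc cover}, feed $\lambda$ into Corollary \ref{intertwiner} and Theorem \ref{fundamental lemma}(\ref{fundamental lemma weakly int-neg}), and then translate the resulting rationality and finiteness statements on $\lambda$ back into the shape \eqref{mu expansion} via the formula relating $\lambda$ and $\mu$. Concretely, fix $\sigma\in\hat M$ and a quotient morphism $\pi_{\lambda,\sigma}\twoheadrightarrow V$ with $\lambda=w_{0}\oline\mu+\rho_{P}$; writing $\mu=\rho_{Q}+\sum_{\alpha\in S}c_{\alpha}\alpha+i\nu$ one gets $\re\lambda=w_{0}\big(\rho_{Q}+\sum_{\alpha\in S}c_{\alpha}\alpha\big)+\rho_{P}$ and $\im\lambda=-w_{0}\nu$ (all of $\rho_{Q},\rho_{P},\alpha,\nu$ being real). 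Let $N_{0}$ be the constant of Lemma \ref{rank one intertwiners} and put $\lambda':=2N_{0}\lambda$. By Corollary \ref{intertwiner} the class $[\lambda']$ consists of weakly integral-negative parameters, so Theorem \ref{fundamental lemma}(\ref{fundamental lemma weakly int-neg}) supplies a constant $N_{1}$ depending only on $\Sigma$ with $\re\lambda'(\beta^{\vee})\in\frac{1}{N_{1}}\Z$ for all $\beta\in\Sigma$; hence $\re\lambda(\beta^{\vee})\in\frac{1}{M}\Z$ for all $\beta\in\Sigma$, where $M:=2N_{0}N_{1}$.

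For the statement about the $c_{\alpha}$ in (\ref{Main thm item 1}) I evaluate the identity $\sum_{\alpha\in S}c_{\alpha}\alpha=w_{0}^{-1}\re\lambda-\rho_{Q}-w_{0}^{-1}\rho_{P}$ on a coroot $\beta^{\vee}$ with $\beta\in\Pi$. Using $W$-equivariance of $\alpha\mapsto\alpha^{\vee}$ one has $(w_{0}^{-1}\re\lambda)(\beta^{\vee})=\re\lambda\big((w_{0}\beta)^{\vee}\big)\in\frac{1}{M}\Z$, while $\rho_{P},\rho_{Q}$ take values in $\frac{1}{c_{0}}\Z$ on coroots for a universal $c_{0}=c_{0}(\gf)$; therefore $\sum_{\alpha\in S}c_{\alpha}\,\alpha(\beta^{\vee})\in\frac{1}{Mc_{0}}\Z$ for every $\beta\in\Pi$. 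The matrix $\big(\alpha(\beta^{\vee})\big)_{\alpha\in S,\ \beta\in\Pi}$ has integer entries (Cartan integers, as $S\subseteq\Z[\Sigma]$) and rank $\#S$ (since $S$ is linearly independent and the coroots $\beta^{\vee}$, $\beta\in\Pi$, span $\af$), hence contains an invertible square submatrix of some integer determinant $d\neq0$; Cramer's rule then gives $c_{\alpha}\in\frac{1}{N}\Z$ with $N:=dMc_{0}$, and since $c_{\alpha}>0$ this is $c_{\alpha}\in\frac{1}{N}\N$. Note that $N$ depends only on $Z$, not on $\chi$, as required.

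For the finiteness of $\nu$ in (\ref{Main thm item 1}) the key point is that $\lambda'(\beta^{\vee})\in\Z\subseteq\R$ for $\beta\in\Sigma(\lambda')$, so $\im\lambda'$ annihilates $\Span_{\R}\Sigma(\lambda')^{\vee}$ and, by the orthogonal decomposition \eqref{decomposition with e}, is determined by its restriction to the edge $\ef=\ef(\lambda')$. Theorem \ref{fundamental lemma}(\ref{fundamental lemma weakly int-neg}) yields $w\in W$ with $w^{-1}\sim_{\lambda'}e$ and $\ef\subseteq w^{-1}\af_{w\lambda'}$. Threading the gallery realizing $w^{-1}\sim_{\lambda'}e$ through the rank-one intertwiners of Lemma \ref{rank one intertwiners}, exactly as in the proof of Corollary \ref{intertwiner} (each wall crossed corresponds to a simple $\alpha_{i}\notin\Sigma(2N_{0}\mu_{i})$, so $I_{\alpha_{i}}(\mu_{i})$ is an isomorphism), produces a quotient morphism $\pi_{w\lambda,\sigma''}\twoheadrightarrow V$. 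Hence by Remark \ref{rmk parameter negative}(\ref{parameter negative item 1}) and Corollary \ref{Cor negative and integral}(\ref{Cor negative and integral item 1}) we may take $\af_{w\lambda'}=\af_{x}^{E}$ for some representative $x$ of an $H$-orbit on $P\bs G$ with $\im(w\lambda)\big|_{\af_{x}^{E}}=\im\chi_{x}$, so that $\im\lambda'\big|_{\ef}=2N_{0}\big(w^{-1}\im\chi_{x}\big)\big|_{\ef}$. As $w$ ranges over $W$ and $(\af_{x}^{E},\chi_{x})$ over the finite set $S_{\chi}$, this leaves only finitely many possibilities; thus $\im\lambda'$, and therefore $\nu=-\frac{1}{2N_{0}}w_{0}^{-1}\im\lambda'$, lies in a finite set ${\mathfrak Y}_{\chi}$ depending on $\chi$ only through $S_{\chi}$. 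If moreover $(V,\eta)$ belongs to the discrete series, then by Corollary \ref{intertwiner} the class $[\lambda']$ consists of integral-negative parameters, so Theorem \ref{fundamental lemma}(\ref{fundamental lemma int-neg}) gives $\lambda'|_{\ef}=0$ and $\lambda'=\re\lambda'$; hence $\lambda$ is real, $\oline\mu=w_{0}^{-1}(\lambda-\rho_{P})$ is real, $\nu=0$ and $\mu\in\af_{Z}^{*}$. Since $V$ is a quotient of $\pi_{\lambda,\sigma}$ with $\lambda$ real and $\sigma$ a finite-dimensional representation of the compact group $M$, the infinitesimal character of $V$ coincides with that of $\pi_{\lambda,\sigma}$ and is real, which proves (\ref{Main thm item 2}).

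The main obstacle I anticipate is not the rationality of the $c_{\alpha}$, which reduces to linear algebra once Theorem \ref{fundamental lemma}(\ref{fundamental lemma weakly int-neg}) is in hand, but the finiteness of $\nu$: it needs the full content of Theorem \ref{fundamental lemma}(\ref{fundamental lemma weakly int-neg}) — not only the integrality of $\re\lambda'$ but the existence of a $w\in[e]_{\lambda'}$ carrying the edge $\ef$ into $\af_{w\lambda'}$ — together with the careful bookkeeping needed to legitimately transport the finite $S_{\chi}$-data along the intertwiner chain from $\lambda$ to $w\lambda$, and the small but essential observation that $\Sigma(\lambda')$-integrality forces $\im\lambda'$ to be concentrated on the edge.
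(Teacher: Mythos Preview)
Your proof is correct and follows the same route as the paper: pass to $\lambda=w_0\oline\mu+\rho_P$ via Lemma \ref{disc cover}, invoke Corollary \ref{intertwiner} and Theorem \ref{fundamental lemma}(\ref{fundamental lemma weakly int-neg}) to get rationality of $\re\lambda$ on coroots and the edge containment $\ef\subseteq w^{-1}\af_{w\lambda'}$, then translate back to the coefficients $c_\alpha$ and to $\nu$. Your write-up is simply more explicit than the paper's---you spell out the Cramer's-rule step for the $c_\alpha$, the intertwiner chain producing the quotient at $w\lambda$, and the observation that $\im\lambda'$ is concentrated on the edge---but the underlying argument is identical.
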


\begin{proof}
We let $\lambda:= w_0 \oline \mu +\rho_P$ and recall from Lemma \ref{disc cover}  that there exists
a $\sigma\in \hat M$ such that $\pi_{\lambda, \sigma} \twoheadrightarrow V$.
By Corollary \ref{intertwiner} there exists a constant $N(G)\in\N$, depending only on $G$, such that the equivalence class $[2N(G)\lambda]$ consists of weakly integral-negative elements.
By Theorem \ref{fundamental lemma} (\ref{fundamental lemma weakly int-neg}) there exists an
$N'\in\N$, only depending on $G$, such that
$$
\re\lambda(\alpha^{\vee})\in\frac{1}{N'}\Z\qquad(\alpha\in\Sigma)\,.
$$
This implies that $\re\lambda\in\frac{1}{N''}\Z(\Pi)$ for some $N''\in\N$ depending only on $G$.
Since the spherical roots are integral linear combinations of simple roots, it follows that there exists a $N\in \N$ (only depending on $Z$) such that $c_\alpha\in \frac{1}{N} \N$.
Moreover, it follows from Corollary \ref{Cor negative and integral}(\ref{Cor negative and integral item 1}) and Theorem \ref{fundamental lemma}(\ref{fundamental lemma weakly int-neg})
(cf. Remark \ref{rmk parameter negative})
that the imaginary part of $\lambda$ is contained in a finite subset of $\af^{*}$ depending only on $\chi$. This proves (i).
For the second assertion we use  (\ref{fundamental lemma int-neg}) in Theorem \ref{fundamental lemma} instead of (\ref{fundamental lemma weakly int-neg}).
The infinitesimal character of $V$ is equal to the infinitesimal character of $\pi_{\lambda,\sigma}$, which is real since $\lambda$ is real.
\end{proof}

Theorem \ref{main thm} (\ref{Main thm item 2}) implies the following.

\begin{cor}\label{K-type}
Fix a normalized unitary $\chi\in(\hat\hf/\hf)^{*}_{\C}$ and a $K$-type $\tau$.
There are only finitely many $\chi$-twisted discrete series representations $V$ for $Z$
such that the $\tau$-isotypical component $V[\tau]$ of $V$ is non-zero.
\end{cor}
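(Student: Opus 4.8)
The plan is to deduce Corollary~\ref{K-type} from Theorem~\ref{main thm} together with two standard facts: the theorem of Harish-Chandra recalled at~\eqref{infinitesimal character map}, that the infinitesimal character map $\mathfrak{X}:\hat G\to\cf_\C^*/W_\cf$ has uniformly finite fibres, and the elementary Casimir estimate bounding the infinitesimal character of a unitary representation in terms of any of its $K$-types. It therefore suffices to show that, for fixed normalized unitary $\chi$ and fixed $\tau\in\hat K$, only finitely many $W_\cf$-orbits occur as the infinitesimal character $\chi_V$ of a $\chi$-twisted discrete series representation $V$ for $Z$ with $V[\tau]\neq0$; granting this, each such orbit is the infinitesimal character of only finitely many $\pi\in\hat G$ (by the uniform fibre bound), hence of only finitely many such $V$.

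First I would locate $\chi_V$ inside a fixed lattice, up to a finite imaginary ambiguity. Let $(V,\eta)$ be a $\chi$-twisted discrete series pair and fix a leading exponent $\mu=\rho_Q+\sum_{\alpha\in S}c_\alpha\alpha+i\nu$ as in~\eqref{mu expansion}. By Lemma~\ref{disc cover} there is a $\sigma\in\hat M$ and a quotient morphism $\pi_{\lambda,\sigma}\twoheadrightarrow V$ with $\lambda=w_0\oline\mu+\rho_P$; hence $\chi_V$ equals the infinitesimal character of $\pi_{\lambda,\sigma}$, i.e.\ the $W_\cf$-orbit determined by $\lambda\in\af_\C^*$ together with the infinitesimal character of $\sigma$. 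Since $M$ is compact, the latter lies in a fixed lattice; and by Theorem~\ref{main thm}(\ref{Main thm item 1}) we have $c_\alpha\in\frac{1}{N}\N$, so, using $S\subseteq\Z[\Sigma]$, the real part $\re\lambda=\rho_P+w_0\rho_Q+\sum_{\alpha\in S}c_\alpha\,w_0\alpha$ lies in a fixed lattice coset of $\af^*$, while $\im\lambda=-w_0\nu$ lies in the finite set $-w_0{\mathfrak Y}_\chi$. Consequently $\re\chi_V$ lies in a fixed lattice coset $\Lambda\subseteq\cf^*$ depending only on $Z$, and $\im\chi_V$ lies in a finite set $F_\chi$ depending only on $Z$ and $\chi$.

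Next I would bound the size of $\chi_V$ using unitarity and the $K$-type $\tau$. The assignment $v\mapsto m_{v,\eta}$ defines a nonzero $G$-equivariant map $V^\infty\to L^2(\hat Z;\chi)$, which is injective since $V$ is irreducible; hence $V$ is unitarizable, and we fix a compatible inner product on it. Write $\Omega$ for the Casimir operator of $G$ and decompose it relative to the decomposition of $\gf$ into the $\pm1$-eigenspaces of the Cartan involution $\theta$; choosing an orthonormal basis $Y_1,\dots,Y_d$ of the $(-1)$-eigenspace, and using that the Casimir of $K$ acts on $V[\tau]$ by the nonnegative Casimir eigenvalue $c(\tau)$ of $\tau$ while the $Y_j$ act by skew-adjoint operators, one gets for any unit vector $v\in V[\tau]$ the identity $\chi_V(\Omega)=\langle\Omega v,v\rangle=c(\tau)-\sum_{j=1}^{d}\|Y_j v\|^2\le c(\tau)$. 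By the Harish-Chandra isomorphism $\langle\chi_V,\chi_V\rangle$ equals $\chi_V(\Omega)$ plus a constant depending only on $\gf$, and since the relevant bilinear form is positive definite on the real subspace $\cf^*$ in which $\re\chi_V$ and $\im\chi_V$ live, we obtain $\|\re\chi_V\|^2=\langle\chi_V,\chi_V\rangle+\|\im\chi_V\|^2\le R$ with $R$ depending only on $\gf$, $\tau$ and $\chi$. Hence $\re\chi_V$ lies in the finite set $\Lambda\cap\{\xi:\|\xi\|^2\le R\}$ and $\im\chi_V\in F_\chi$; so only finitely many infinitesimal characters occur, and the corollary follows by the reduction above.

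I expect the only genuinely non-formal point to be the implication that occurrence of a fixed $K$-type forces the real part of the infinitesimal character to be bounded: this is precisely the Casimir estimate, where unitarity of $V$ enters, and it must be combined with the finiteness of the imaginary part supplied by Theorem~\ref{main thm}(\ref{Main thm item 1}), since a bound on $\langle\chi_V,\chi_V\rangle$ by itself does not bound $\re\chi_V$ (the Harish-Chandra form is indefinite on $\cf_\C^*$). The remaining steps --- identifying $\chi_V$ with the infinitesimal character of $\pi_{\lambda,\sigma}$ via Lemma~\ref{disc cover}, the lattice bookkeeping, and the appeal to Harish-Chandra's finiteness theorem for $\mathfrak{X}$ --- are routine.
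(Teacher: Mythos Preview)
Your proof is correct and follows essentially the same route as the paper's: both combine a Casimir/unitarity inequality for a vector in $V[\tau]$ with the lattice and finiteness information on $\lambda$ from Theorem~\ref{main thm}(\ref{Main thm item 1}), and then invoke Harish-Chandra's finite-fibre theorem for $\mathfrak{X}$. The only cosmetic difference is that the paper uses the Laplace element $\Delta=C_\gf+2C_\kf$ and bounds $\langle\re\lambda\rangle^2$ directly, whereas you decompose $C_\gf$ along $\kf\oplus\pf$ and bound $\|\re\chi_V\|^2$; these are equivalent formulations of the same estimate.
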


\begin{proof} (cf. \cite[Lemma 70, p. 84]{HC})  Let $\tf\subseteq \mf$ be a Cartan subalgebra of $\mf$.   Set $\cf:=\af+i\tf$ and note that
$\cf_\C$ is a Cartan subalgebra of $\gf_\C$.  We inflate $\Sigma^+=\Sigma^+(\gf,\af)$ to a
positive system $\Sigma^+(\gf_\C, \cf)$ and write $\rho_B$ for the corresponding half sum.  Observe
that $\rho_B=\rho_P +\rho_M \in \cf^*$.
We identify $\sigma$ with its highest weight in $i\tf^*$ and write $\langle\cdot\rangle^{2}$ for the quadratic form on $\cf_{\C}$ obtained from the Cartan-Killing form.
Let $C_\gf$ be the Casimir element of $\gf$.  Note that $C_\gf$ acts on $\pi_{\lambda,\sigma}$  with $\lambda\in \af_{\C}^*$ as the scalar
$$
\langle\lambda + \sigma+\rho_M\rangle^2 - \langle\rho_B\rangle^2\, .
$$
Let $\tf_{\kf}\supseteq \tf$ be a Cartan subalgebra of $\kf$ and $\rho_\kf\in i\tf_{\kf}^*$ be the Weyl half sum with respect to a fixed positive system of $\Sigma(\kf_\C, \tf_{\kf})\subseteq i \tf_{\kf^*}$. As before we identify $\tau\in \hat K$ with its highest weight in $i\tf_{\kf}^*$. We write $\langle\cdot\rangle_{\kf}^{2}$ for the quadratic form on $\tf_{\kf,\C}$ obtained from the Cartan-Killing form.
Further we let $C_\kf$ denote the Casimir element of $\kf$.  The element $\Delta:=C_\gf +2C_\kf$ is a Laplace element and thus $\la \Delta v, v\ra \leq 0$ for all
$K$-finite vectors in a unitarizable Harish-Chandra module $V$.

Let now $V$ be a $\chi$-twisted discrete series representation and $\pi_{\lambda, \sigma}\twoheadrightarrow
V$ a quotient morphism. For $0\neq v\in V[\tau]$ we obtain
\begin{align*}
0\geq \la \Delta v, v\ra &= \la (C_\gf + 2 C_{\kf}) v, v\ra\\
&=\Big( \langle\lambda + \sigma+\rho_M\rangle^2 - \langle\rho_B\rangle^2
    - 2\big(\langle\tau +\rho_\kf\rangle_{\kf}^{2} - \langle\rho_\kf\rangle_{\kf}^{2}\big)\Big) \la v, v\ra\, .
\end{align*}
This forces
$$
\langle\lambda + \sigma+\rho_M\rangle^2 - \langle\rho_B\rangle^2
    \leq 2 \big(\langle\tau +\rho_\kf\rangle_{\kf}^{2} - \langle\rho_\kf\rangle_{\kf}^{2}\big)
$$
and in particular
$$
\langle\re\lambda\rangle^2-\langle\im\lambda\rangle^2 - \langle\rho\rangle^2
\leq 2 \big(\langle\tau +\rho_\kf\rangle_{\kf}^{2} - \langle\rho_\kf\rangle_{\kf}^{2}\big)\, .
$$
By Theorem \ref{main thm} (\ref{Main thm item 1}) $\re \lambda$ is discrete  and $\im\lambda$ is contained in a finite set that only depends on $Z$.
The assertion now follows from the fact that the map ${\mathfrak X}$ from (\ref{infinitesimal character map}) has finite fibers.
\end{proof}

\section*{Appendix A: Invariant Sobolev Lemma}
\addcontentsline{toc}{section}{Appendix: Invariant Sobolev Lemma}
\setcounter{section}{1}
\setcounter{theorem}{0}
\setcounter{equation}{0}
\renewcommand{\thesection} {\Alph{section}}

The aim of this appendix is an invariant Sobolev lemma for functions on $Z$ that transform under the right action of $A_{Z,E}$ by a unitary character.

Recall that a weight on $Z$ is a locally bounded function $w:Z\to\R_{>0}$ with the property that for every compact subset $\Omega\subseteq G$ there exists a constant $C>0$ such that
$$
w(gz)\leq Cw(z)\qquad(z\in Z,g\in\Omega)\,.
$$
Further recall that there is a natural identification between the space of smooth densities on $\hat Z$ and the space of functions
$$
C^{\infty}(G:\Delta_{\hat Z})
:=\{f\in C^{\infty}(G):f(\dotvar\hat h)=\Delta_{\hat Z}^{-1}(\hat h)f \text{ for }\hat h\in\hat H\}\,,
$$
where $\Delta_{\hat Z}$ is the modular character
$$
\Delta_{\hat Z}:\hat H\to\R_{>0};\qquad ha\mapsto a^{-2\rho_{Q}}\qquad(a\in A_{Z,E},h\in H)\,.
$$
See Sections 8.1 and 8.2 in \cite{KKS2}. Note that smooth functions $f:G\to\C$ satisfying
$$
f(\dotvar ha)=a^{\rho_{Q}+i\nu}f\qquad(h\in H, a\in A_{Z,E})
$$
for some $\nu\in\af_{Z,E}^{*}$, are in the same way identified with smooth half-densities on $\hat Z$.

Let $B$ be a ball in $G$, i.e., a compact symmetric neighborhood of $e$ in $G$. Recall that the corresponding volume-weight $\v_{B}$ is defined by
$$
\v_{B}(z):=\vol_Z (Bz)\qquad (z\in Z)\,.
$$
Note that if $B'$ is another ball in $G$, then there exists $c>0$ such that
$$
\frac{1}{c}\v_{B'}\leq \v_{B}\leq c\v_{B'}\,.
$$
In the following we drop the index and write $\v$ instead of $\v_{B}$.

The following lemma is a generalization of the invariant Sobolev lemma of Bernstein.
See the key lemma in \cite{Bernstein} on p. 686 and \cite[Lemma 4.2]{KS3}.

\begin{lemma}\label{Invariant Sobolev Lemma}
For every $k>\dim G$ there exists a constant $C>0$ with the following property.
Let $\nu \in \af_{Z,E}^*$ and let $f\in C^\infty(Z)$ be a smooth function which transforms
as $f(z\cdot a) = f(z)a^{\rho_Q + i\nu}$ for all $a\in A_{Z,E}$, and
let $\Omega_f$ be the attached  half-density  on $\hat Z=G/\hat H$.
Then
$$
|f(z)|\leq  C \v(z)^{-\frac{1}{2} } \| \Omega_f\|_{B\hat z, 2;k} \qquad (z\in Z)\,.
$$
Here $\hat z\in \hat Z$ is the image of $z\in Z$ and $\|\cdot\|_{B\hat z,2:k}$ is the $k$'th $L^{2}$-Sobolev norm on $B\hat z$.
\end{lemma}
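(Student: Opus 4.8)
The plan is to reduce the assertion to the classical invariant Sobolev lemma of Bernstein by passing to half-densities on $\hat Z=G/\hat H$, and then to match the two normalizations. First, lift $f$ to $\tilde f\in C^{\infty}(G)$, $\tilde f(g):=f(g z_{0})$. By the transformation property of $f$ and (\ref{Expression for Delta_hatZ}) one has $\tilde f(g\hat h)=a^{\rho_{Q}+i\nu}\tilde f(g)$ for $\hat h=ha$ with $h\in H$ and $a\in A_{E}$ representing an element of $A_{Z,E}$. Since $|a^{\rho_{Q}+i\nu}|=a^{\rho_{Q}}=\Delta_{\hat Z}(\hat h)^{-1/2}$, the function $\tilde f$ represents precisely the half-density $\Omega_{f}$ on $\hat Z$ under the identification recalled just before the lemma, and for $u$ in the enveloping algebra of $\gf$ the differential operator $\Lbb_{u}$ coming from the infinitesimal $G$-action (a right-invariant operator on $G$) obeys the same transformation law when applied to $\tilde f$, so that $\Lbb_{u}\tilde f$ represents $\Lbb_{u}\Omega_{f}$. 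Consequently $\|\Omega_{f}\|_{B\hat z,2;k}^{2}=\sum_{|\alpha|\le k}\int_{B\hat z}|\Lbb_{u_{\alpha}}\Omega_{f}|^{2}$ for a fixed basis $\{u_{\alpha}\}$ of $\U_{k}(\gf)$, the integrands being the canonical densities attached to the half-densities $\Lbb_{u_{\alpha}}\Omega_{f}$. Finally, since $Z$ is unimodular, (\ref{Expression for Delta_hatZ}) also gives $\v(z\cdot a)=a^{-2\rho_{Q}}\v(z)$ for $a\in A_{Z,E}$, so $|f(z)|^{2}\v(z)$ is constant on the fibres of $Z\to\hat Z$ and equals $|\tilde f(g)|^{2}\v(g z_{0})$ for any lift $g$ of $\hat z$. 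Thus the desired bound is equivalent to the $\hat Z$-statement
\[
|\tilde f(g)|^{2}\,\v(g z_{0})\ \le\ C^{2}\sum_{|\alpha|\le k}\int_{B\hat g}|\Lbb_{u_{\alpha}}\Omega_{f}|^{2}\qquad(g\in G).
\]

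To obtain this I would first use the ordinary Sobolev embedding: for $k>\dim G$ there are a constant $C_{0}$ and a relatively compact neighbourhood $W_{0}$ of $e$ in $G$ with $W_{0}\subseteq B$ such that $|\phi(g)|^{2}\le C_{0}\sum_{|\alpha|\le k}\int_{W_{0}g}|(\Lbb_{u_{\alpha}}\phi)(g')|^{2}\,dg'$ for all $\phi\in C^{\infty}(G)$ and $g\in G$; this is $W^{k,2}_{\mathrm{loc}}(\R^{\dim G})\hookrightarrow C^{0}$ transported through the chart $\xi\mapsto\exp(\xi)g$, with constant uniform in $g$ because left translations preserve Haar measure and the Jacobian of $\exp$ is $g$-independent. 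Apply this to $\phi=\tilde f$. It then remains to convert the right-hand side to an integral over $W_{0}\hat g\subseteq B\hat g$. Since $G$ and $H$ are unimodular — the latter because $Z$ is unimodular — and the modular behaviour of $\hat H=HA_{E}$ is governed by (\ref{Expression for Delta_hatZ}), the Weil integration formula for $G\to\hat Z$ gives $\int_{W_{0}g}|(\Lbb_{u_{\alpha}}\tilde f)(g')|^{2}\,dg'=\int_{W_{0}\hat g}w_{g}(\hat g')\,|\Lbb_{u_{\alpha}}\Omega_{f}|^{2}(\hat g')$, where $w_{g}(\hat g')$ is the $\Delta_{\hat Z}$-weighted $\hat H$-volume of the slice $\{\hat h\in\hat H:g'\hat h\in W_{0}g\}$; the $\Delta_{\hat Z}^{-1}$-equivariance of $|\Lbb_{u_{\alpha}}\tilde f|^{2}$ is exactly what allows the fibre integral to factor through the base.

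Everything then comes down to the uniform estimate $w_{g}(\hat g')\,\v(g z_{0})\le C_{1}$ for $\hat g'\in W_{0}\hat g$, with $C_{1}$ depending only on $B$; combined with $W_{0}\subseteq B$ this gives the claim with $C=(C_{0}C_{1})^{1/2}$. This is a statement purely about the distortion of a fixed ball under conjugation: $\v(g z_{0})=\vol_{Z}((g^{-1}Bg)\cdot z_{0})$ by $G$-invariance of the measure on $Z$, while $w_{g}(\hat g')$ is controlled by the $\Delta_{\hat Z}$-weighted $\hat H$-volume of $\hat H\cap g^{-1}B'g$ for a fixed compact set $B'$ depending only on $B$; using the Weil formulas for $G\to Z$ and $G\to\hat Z$ together with $\vol_{G}(g^{-1}Bg)=\vol_{G}(B)$, both $\v(g z_{0})$ and $w_{g}(\hat g')^{-1}$ are of the same order as the covolume of the conjugated subgroups, and the point is that for the pair $H\subseteq\hat H$, with $\hat H/H\cong A_{Z,E}$ a vector group and the weight $\Delta_{\hat Z}(ha)=a^{-2\rho_{Q}}$ equal to the Jacobian of the extra $A_{Z,E}$-direction, these quantities are comparable up to a constant depending only on $B$. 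I expect this last comparison — the uniform control of the twisted $A_{Z,E}$-fibre volume — to be the main obstacle; essentially all the rest is the classical invariant Sobolev lemma of Bernstein (\cite{Bernstein}, \cite[Lemma 4.2]{KS3}) together with the routine but careful bookkeeping of the modular factor $\Delta_{\hat Z}$ introduced by the line-bundle twist.
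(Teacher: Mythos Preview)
Your reduction is sound and matches the paper's: lift to $G$, apply the local Sobolev inequality, then convert the resulting integral over a $G$-ball into an integral of the half-density over $B\hat z$. The gap is exactly where you locate it, and your heuristic does not close it. You need
\[
\int_{W_0 g}|\Lbb_{u_\alpha}\tilde f|^2\,dg'\ \le\ \frac{C_1}{\v(g z_0)}\int_{B\hat g}|\Lbb_{u_\alpha}\Omega_f|^2 ,
\]
and you propose to obtain this from a \emph{pointwise} bound $w_g(\hat g')\,\v(gz_0)\le C_1$. Your justification --- ``both $\v(gz_0)$ and $w_g(\hat g')^{-1}$ are of the same order as the covolume of the conjugated subgroups'' --- does not hold up: the Weil formula gives $\vol_G(g^{-1}Bg)=\int_Z \vol_H\bigl(g'^{-1}(g^{-1}Bg)\cap H\bigr)\,dz$, an integral with a varying integrand over a varying base set, so there is no product formula $\vol_G\approx\vol_Z\cdot\vol_H$. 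In particular you have not shown that the $\Delta_{\hat Z}$-weighted $\hat H$-slice volume decays like $\v^{-1}$, and this is not a formality: $\v$ can grow without bound (cf.\ (\ref{reg growth})).

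The paper circumvents the pointwise estimate entirely. It proves directly the \emph{integrated} inequality
\[
\int_B\phi(g x)\,dg\ \le\ \frac{C}{\v(x)}\int_{Bx\hat H}\phi
\qquad(\phi\ge 0\text{ a density on }\hat Z),
\]
by the following device. Observe that $1/\v$ is itself a density on $\hat Z$, and set
$\w_B(\hat z):=\int_{B\hat z}1/\v$. One checks easily that $\w_B$ is a weight on $\hat Z$. The crucial point is the \emph{lower} bound $\w_B\ge c_1>0$, obtained from the polar decomposition of $\hat Z$ (Lemma~\ref{ineq for int in polar coordinates}) together with the volume growth along $A_{\hat Z}^-$. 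With $\w_B\ge c_1$ in hand, a short averaging argument (use that $\v\phi$ is right $\hat H$-invariant and that $\v$ is a weight, then Fubini) gives the displayed inequality. This replaces your unproved pointwise comparison by a global integrated one whose proof rests on the geometry of $\hat Z$, not on Weil-formula heuristics for conjugated balls.
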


Let $A_{0}$ be a closed subgroup of $A$ such that the multiplication map $A_{0}\times A_{E}\to A$ is a diffeomorphism. Let $A_0^-$ be the cone such that $A_0^{-}A_E/A_E=A_{\hat Z}^{-}$.
By taking inverse images of the projection $A_{Z}=A/(A\cap H)\to A_{\hat{Z}}=A/A_{E}$ we get
\begin{equation}\label{eq formula for A_Z^-}
A_{0}^{-}A_{E}/(A\cap H)=A_{Z}^{-}\,.
\end{equation}

We recall from \cite[Section 3.4]{KKS2} that there exists a finite sets $F,\W\subseteq G$ such that
\begin{equation}\label{eq WA_E subseteq A_E WH}
\W A_{Z,E}\subseteq A_{Z,E}\W H
\end{equation}
and
$$
\hat Z=FK A_{\hat Z}^{-}\W\cdot\hat z_{0}\,.
$$
For the proof of the invariant Sobolev lemma we need the following lemma.

\begin{lemma}\label{ineq for int in polar coordinates}
There exists an $a_{1}\in A$ and a constant $c>0$, depending only on the normalization of the Haar measures on $K$ and $A_{\hat{Z}}$, such that for all compactly supported measurable non-negative densities $f$ on $\hat Z$ we have
$$
\int_{\hat Z}f
\geq c\sum_{w\in \W}\int_{K}\int_{A_{\hat{Z}}^{-}}f(ka_{1}aw)a^{-2\rho_{Q}}\,da\,dk\,.
$$
\end{lemma}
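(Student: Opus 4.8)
The plan is to read the stated inequality as a one-sided integration formula in polar coordinates on $\hat Z$, built from the polar decomposition $\hat Z = FK A_{\hat Z}^{-}\W\cdot\hat z_0$ recalled from \cite{KKS2}. Since $f\geq 0$ and only a lower bound is claimed, I may simply discard the finite set $F$, the boundary of the compression cone, and any over-counting of fibres; concretely it suffices to integrate $f$ over a single polar piece $K\cdot a_1 A_0^{-}\cdot\W\cdot\hat z_0$ for a suitable $a_1\in A$ chosen deep in the cone, and to bound the Jacobian of the corresponding parametrization from below by a constant multiple of $a^{-2\rho_{Q}}$. By monotone convergence I may also assume $f\in C_{c}(\hat Z)$.

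First I would fix $a_1\in A$ whose image $\bar a_1$ in $A_{\hat Z}$ lies deep in the interior of the compression cone; then $\bar a_1\bigl(A_0^{-}A_E/A_E\bigr)=\bar a_1 A_{\hat Z}^{-}$ is contained in that interior, since $A_0^{-}A_E/A_E=A_{\hat Z}^{-}$ and the interior of a convex cone is stable under adding cone elements. For each $w\in\W$ I consider the smooth map $\pi_w\colon K\times a_1 A_0^{-}\to\hat Z$, $(k,b)\mapsto kbw\hat z_0$. On a dense open subset of the interior of the compression cone $\pi_w$ is a submersion with locally finite fibres of uniformly bounded cardinality; this is part of the polar-decomposition package, resting on the local structure theorem (\ref{adapted parabolic}) and the finiteness of $P\bs G/\hat H$ (\cite{KKS}). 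Applying the coarea formula to $\pi_w$, and identifying densities on $\hat Z$ with functions on $G$ via $\Delta_{\hat Z}(ha)=a^{-2\rho_{Q}}$ as in (\ref{Expression for Delta_hatZ}), I obtain a constant $c_0>0$ and positive Jacobians $J_w$ with
$$
\int_{\hat Z} f \geq c_0\sum_{w\in\W}\int_{K}\int_{A_0^{-}} f(k a_1 a w)\, J_w(k,a_1 a)\, da\, dk .
$$

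The key step is the uniform lower bound $J_w(k,b)\geq c_1\, b^{-2\rho_{Q}}$ for all $k\in K$, all $w\in\W$ and all $b$ deep in the interior of $A_{\hat Z}^{-}$. This is exactly where the volume-weight enters: the Jacobian of the polar map along the compression cone is comparable to the volume-weight $\v$ on the relevant region, and by (\ref{reg growth}) — and, for shifted base points, by Proposition \ref{vol growth} — the function $\v$ is bounded below there by a constant multiple of $a^{-2\rho_{Q}}$, with uniformity over $k\in K$ and over the finite set $\W$ coming from compactness. Granting this bound, I substitute $b=a_1 a$, use $(a_1 a)^{-2\rho_{Q}}=a_1^{-2\rho_{Q}}a^{-2\rho_{Q}}$ with $a_1^{-2\rho_{Q}}>0$ a fixed number, and carry the Haar measure of $A_{\hat Z}^{-}$ back to $A_0^{-}$ along the diffeomorphism induced by $A_0\times A_E\to A$ (which affects the constant only through the normalization of Haar measures on $A_0$); this produces the stated inequality with $c$ the product of $c_0$, $c_1$, $a_1^{-2\rho_{Q}}$ and the Haar constant.

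I expect the one genuinely hard point to be the uniform Jacobian estimate $J_w\gtrsim b^{-2\rho_{Q}}$, together with the verification that $\pi_w$ is a submersion with uniformly bounded fibres once one is deep enough in the cone — both rest on the fine geometry of real spherical spaces (the compression cone, the local structure theorem, finiteness of the double coset space) and on the volume-growth results, whereas everything downstream of these inputs is routine bookkeeping with cones and Haar measures.
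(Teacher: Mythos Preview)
Your approach diverges from the paper's and contains a genuine gap. The paper does not work directly on $\hat Z$ at all: it lifts the density $f$ to a compactly supported function $\varphi$ on the \emph{unimodular} space $Z$ via the fibre integral
\[
\int_{A_E/A\cap H}\varphi(z a_E)\,a_E^{-2\rho_Q}\,da_E=f(\hat z),
\]
so that $\int_{\hat Z} f=\int_Z \varphi$ by Fubini for densities. It then cites the already established polar integration inequality on $Z$ from \cite[Lemma~3.3(1)]{KSS}, which gives
\[
\int_Z \varphi \;\geq\; c\sum_{w\in\W}\int_K\int_{A_0^-}\int_{A_E/A\cap H}\varphi(k a_1 a a_E w)\,a_E^{-2\rho_Q}a^{-2\rho_Q}\,da_E\,da\,dk,
\]
and the inner $A_E$-integral simply reconstitutes $f(k a_1 a w)$. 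No Jacobian estimate on $\hat Z$ is ever needed; the hard geometry has already been absorbed into the cited lemma on $Z$.

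Your direct attack on $\hat Z$ runs into trouble at exactly the point you flag. The claim that $\pi_w\colon K\times a_1A_0^{-}\to\hat Z$ is a ``submersion with locally finite fibres of uniformly bounded cardinality'' is internally inconsistent: a submersion has fibres of dimension $\dim(K)+\dim(A_0)-\dim\hat Z$, which is typically positive, not zero. The polar map is in general neither a local diffeomorphism nor a submersion, so the coarea formula in the form you invoke does not apply, and there is no well-defined scalar Jacobian $J_w$ to compare with $b^{-2\rho_Q}$. Moreover, the volume-growth results (\ref{reg growth}) and Proposition~\ref{vol growth} concern the measure of balls $Bz$, not the Jacobian of the polar parametrization; these are related but not interchangeable, and on the non-unimodular space $\hat Z$ the passage between them is delicate. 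The paper sidesteps all of this by passing to $Z$, where unimodularity makes the integration formula of \cite{KSS} available off the shelf.
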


\begin{proof}
Let $f$ be a compactly supported measurable non-negative density on $\hat Z$ and let $\varphi:Z\to\R_{\geq0}$ be a compactly supported continuous function such that
$$
\int_{A_{E}/A\cap H}\varphi(za_{E})a_{E}^{-2\rho_{Q}}\,da_{E}=f(\hat z) \qquad(z\in Z)\,.
$$
Here $\hat z\in\hat Z$ denotes the image of $z\in Z$.
Then by the Fubini theorem for densities (see \cite[Theorem A.8]{vdBK})
$$
\int_{\hat Z}f
=\int_{Z}\varphi(z)\,dz.
$$
We will use Lemma 3.3 (1) in \cite{KSS} to obtain a lower bound for
this integral. The estimate in that lemma involves the integration
over the conjugate of the maximal compact subgroup by some element in $A$, which we shall
denote by $a_1$. We apply the lemma to the function $z\mapsto\varphi(a_1\cdot z)$ on $Z$, and
write the estimate in terms of the original maximal compact subgroup $K$. By this we obtain
a constant $c>0$ such that
$$
\int_{Z}\varphi(a_{1}\cdot z)\,dz
\geq c\sum_{w\in \W}\int_{K}\int_{A_{Z}^{-}}\varphi(ka_{1}aw)a^{-2\rho_{Q}}\,da\,dk\,.
$$
Using that the measure on $Z$ is $G$-invariant and (\ref{eq formula for A_Z^-}), we obtain
$$
\int_{Z}\varphi(z)\,dz
\geq c\sum_{w\in \W}\int_{K}\int_{A_{0}^{-}}\int_{A_{E}/A\cap H}\varphi(ka_{1}aa_Ew)a_E^{-2\rho_{Q}}a^{-2\rho_{Q}}\,da_E\,da\,dk\,.
$$
In view of (\ref{eq WA_E subseteq A_E WH}) we now have
$$
\int_{Z}\varphi(z)\,dz
\geq c\sum_{w\in \W}\int_{K}\int_{A_{0}^{-}}f(ka_{1}aw)a^{-2\rho_{Q}}\,da\,dk
= c\sum_{w\in \W}\int_{K}\int_{A_{\hat{Z}}^{-}}f(ka_{1}aw)a^{-2\rho_{Q}}\,da\,dk\,.
$$
\end{proof}

\begin{proof}[Proof of Lemma \ref{Invariant Sobolev Lemma}]
We will prove that there exists a constant $C>0$ such that for every non-negative smooth density $\phi$ on $\hat Z$ and every $x\in G$
\begin{equation}\label{integral inequality}
\int_{B}\phi(gx)\,dg
\leq C\frac{1}{\v(x)}\int_{Bx\hat H} \phi\,.
\end{equation}
On the left-hand side $\phi$ is considered as a function on $G$ that transforms under the right-action of $\hat H$ with the modular character.
Before giving the proof of \eqref{integral inequality} we derive the lemma from it.
By the local Sobolev lemma, applied to the function
$f(\dotvar z)$ on $G$, we obtain the following bound by
the $k$-th Sobolev norm of
$f(\dotvar z)$ over the neighborhood $B$ of $e\in G$:
$$|f(z)|\leq C\|f(\dotvar z)\|_{B,2;k}.$$
The constant $C$ is independent of $f$ and $z$.
Choose $x\in G$ such that $z=xH$.
Using \eqref{integral inequality} for the square of each derivative
up to $k$ of $\Omega_f$,
we also have
$$\|f(\dotvar z)\|_{B,2;k}^2\leq C\frac1{\v(x)}\,\|\Omega_f\|_{Bx\hat H,2;k}^2.$$
The lemma follows from these inequalities.

For a measurable function $\chi:Z\to\R_{\geq0}$, let $\psi_{\chi}:G\to\R_{\geq0}$ be such that
$$
\chi=\int_{H}\psi_{\chi}(\dotvar h)\,dh\,.
$$
Then for every $a\in A_{Z,E}$ we have
$$
\int_{G}\psi_{\chi}(xa)\,dx
=\int_{Z}\int_{H}\psi_{\chi}(gha)\,dh\,dgH
=\big|\det\Ad(a)\big|_{\hf}\big|\int_{Z}\chi(z\cdot a)\,dz\,.
$$
Since $\big|\det\Ad(a)\big|_{\hf}\big|=a^{-2\rho_{Q}}$, and by the invariance of the Haar measure the left-hand side is independent of $a$, it follows that
$$
\int_{Z}\chi(z\cdot a)\,dz
=a^{2\rho_{Q}}\int_{Z}\chi(z)\,dz\,.
$$
We may apply this to $\chi=\1_{Bz}$ and obtain
$$
\v(\dotvar a)=a^{-2\rho_{Q}}\v\qquad(a\in A_{Z,E})\,.
$$
We conclude that $\frac{1}{\v}$ may be considered as a density on $Z$.

Let $B\subseteq G$ be a ball and define $\w_{B}:\hat Z\to\R_{>0}$ by
$$
\w_{B}(\hat z)
:=\int_{B \hat z}\frac{1}{\v}\qquad (\hat z\in\hat Z)\,.
$$
If $B'$ is another ball in $G$, then we may cover $B'$ by a finite number of sets of the form $g B$. Since $\v$ is a weight, it follows that
there exists a $c>0$ such that
\begin{equation}\label{comparablity w's}
\frac{1}{c}\w_{B'}\leq \w_{B}\leq c\w_{B'}\,.
\end{equation}
Let $\Omega$ be a compact subset of $G$. Let $B'=\{g^{-1}bg:g\in \Omega,b\in B\}$. Then
$$
\w_{B}(g\hat z)
=\int_{Bg\hat z}\frac{1}{\v}
\leq\int_{gB'\hat z}\frac{1}{\v}
=\w_{B'}(\hat z)
\qquad(\hat z\in \hat Z, g\in\Omega)\,.
$$
From (\ref{comparablity w's}) it follows that there exits a $c>0$ such that
$$
\w_{B}(g\hat z)\leq c\w_{B}(\hat z)\qquad(\hat z\in\hat Z,g\in\Omega)\,.
$$
We thus see that $\w_{B}$ is a weight.

We claim that there exists a $c_{1}>0$ such that for every $z\in\hat Z$
\begin{equation}\label{inf w>0}
\w_{B}(\hat z)>c_{1}\,.
\end{equation}
Since $\w_{B}$ is a weight, it suffices to show that $\inf_{a_{0}\in A_{\hat Z}^{-},w_{0}\in \W}\w_{B}(a_{0}w_{0}\cdot \hat z_{0})>0$ to prove this claim.

Let $a_{0}\in A_{\hat Z}^{-}$ and $w_{0}\in\W$.
It follows from the inequality (3.6) in \cite{KSS} and Lemma \ref{ineq for int in polar coordinates} that there exists a an element $a_{1}\in A$ and a constant $C>0$ such that,
\begin{align*}
\w_{B}(a_{0}w_{0}\cdot \hat z_{0})
&\geq C\sum_{w\in \W}\int_{K}\int_{A_{\hat Z}^{-}}\1_{Ba_{0}w_{0}\cdot z_{0}}(ka_{1}aw\cdot \hat z_{0})\,da\,dk\\
&\geq C\int_{K}\int_{A_{\hat Z}^{-}}\1_{Ba_{0}w_{0}\cdot z_{0}}(ka_{1}aw_{0}\cdot \hat z_{0})\,da\,dk\\
&\geq C\int_{K}\int_{a_{1}A_{\hat Z}^{-}}\1_{Ba_{0}w_{0}\cdot z_{0}}(kaw_{0}\cdot \hat z_{0})\,da\,dk\,.
\end{align*}
For the last equality we used the invariance of the measure on $A_{\hat Z}$.
Let $A_{c}$ be a compact subset of $A$ with non-empty interior and $A_{c}A_{Z,E}/A_{Z,E} \subseteq a_{1}A_{\hat Z}^{-}$.
By enlarging $B$, we may assume that $B$ is invariant under left translations by elements from $K$ on the left and $A_{c}\subseteq B$. Since $\int_{K}\,dk=1$, we have
$$
\int_{K}\int_{a_{1}A_{\hat Z}^{-}}\1_{Ba_{0}w_{0}\cdot z_{0}}(kaw_{0}\cdot \hat z_{0})\,da\,dk
=\int_{a_{1}A_{\hat Z}^{-}}\1_{Ba_{0}w_{0}\cdot z_{0}}(aw_{0}\cdot \hat z_{0})\,da\,.
$$
If $a\in a_{0}A_{c}A_{Z,E}/A_{Z,E}$, then $aw_{0}\cdot \hat z_{0}\in A_{c}a_{0}w_{0}\cdot \hat z_{0}\subseteq Ba_{0}w_{0}\cdot \hat z_{0}$.
Therefore,
$$
\int_{a_{1}A_{\hat Z}^{-}}\1_{Ba_{0}w_{0}\cdot z_{0}}(aw_{0}\cdot \hat z_{0})\,da
\geq \int_{a_{1}A_{\hat Z}^{-}}\1_{a_{0}A_{c}A_{Z,E}/A_{Z,E}}(a)\,da\,.
$$
Since $a_{0}\in A_{\hat Z}^{-}$ and $A_{\hat Z}^{-}A_{\hat Z}^{-}\subseteq A_{\hat Z}^{-}$, the set $a_{0}A_{c}A_{Z,E}/A_{Z,E}$ is contained in $a_{1}A_{\hat Z}^{-}$ and thus
$$
\int_{a_{1}A_{\hat Z}^{-}}\1_{a_{0}A_{c}A_{Z,E}/A_{Z,E}}(a)\,da
=\int_{A_{\hat Z}}\1_{a_{0}A_{c}A_{Z,E}/A_{Z,E}}(a)\,da
=\int_{A_{\hat Z}}\1_{A_{c}A_{Z,E}/A_{Z,E}}(a)\,da\,,
$$
and hence
$$
\w_{B}(a_{0}w_{0}\cdot \hat z_{0})
\geq C\int_{A_{\hat Z}}\1_{A_{c}A_{Z,E}/A_{Z,E}}(a)\,da\,.
$$
The claim (\ref{inf w>0}) now follows as the right-hand side is independent of $a_{0}$ and strictly positive.

Let $\phi$ be a non-negative smooth density on $\hat Z$ and let $x\in G$. To prove (\ref{integral inequality}) we may assume that $\supp\phi\subseteq Bx\hat H$ and that $B^{-1}=B$.
Since $\v$ is a weight, there exists a constant $c_{2}>0$ such that $\v(x)\leq c_{2}\v(y)$ for every $y\in Bx$. If $y=bx$ with $b\in B$, then
$$
\v(x)\int_{B}\phi(gx)\,dg
\leq c_{2}\v(y)\int_{B}\phi(gb^{-1}y)\,dg
\leq c_{2}\v(y)\int_{B^{2}}\phi(gy)\,dg\,.
$$
Note that $\v\phi$ is right $\hat H$-invariant, hence
$$
\v(x)\int_{B}\phi(gx)\,dg
\leq c_{2}\v(y)\int_{B^{2}}\phi(gy)\,dg
\qquad(y\in Bx\hat H)\,.
$$
Therefore,
$$
\int_{B}\phi(gx)\,dg\int_{y\in Bx\hat{H}}\frac{1}{\v(y)}
\leq \frac{c_{2}}{\v(x)}\int_{y\in Bx\hat{H}}\left[\int_{B^{2}}\phi(gy)\,dg\right].
$$

Let $c_{1}>0$ be as in (\ref{inf w>0}). Then
$$
\int_{B}\phi(gx)\,dg
\leq \frac{1}{c_{1}}\int_{B}\phi(gx)\,dg\,\int_{Bx\hat H}\frac{1}{\v}
\leq \frac{c_{2}}{c_{1}\v(x)}\int_{y\in Bx\hat H}\Big[\int_{B^{2}}\phi(gy)\,dg\Big]\,.
$$
Now we use Fubini's theorem to change the order of integration. We thus get
\begin{align*}
\int_{B}\phi(gx)\,dg
&\leq\frac{c_{2}}{c_{1}\v(x)}\int_{B^{2}}\Big[\int_{y\in Bx\hat H}\phi(gy)\Big]\,dg\\
&\leq\frac{c_{2}}{c_{1}\v(x)}\int_{B^{2}}\Big[\int_{y\in\hat Z}\phi(gy)\Big]\,dg\\
&=\frac{c_{2}\vol(B^{2})}{c_{1}\v(x)}\int_{\hat Z}\phi\,.
\end{align*}
This implies (\ref{integral inequality}) as by assumption $\supp\phi\subseteq Bx\hat H$.
\end{proof}

\section*{Appendix B: Intertwining operators}
\addcontentsline{toc}{section}{Appendix: Intertwining operators}
\setcounter{section}{2}
\setcounter{theorem}{0}
\setcounter{equation}{0}

The main result of this appendix is the following proposition.

\begin{prop}\label{Prop intertwining operators}
There exists a $N\in\N$ such that for every $\alpha\in\Pi$, $\sigma\in\hat{M}$ and $\lambda\in\af_{\C}^{*}$ with $\lambda(\alpha^{\vee})\notin\frac{1}{N}\Z$, the standard intertwining operator
$I_{\alpha}(\lambda,\sigma):\pi_{s_{\alpha}\lambda,s_{\alpha}\sigma}\to\pi_{\lambda,\sigma}$ is defined and an isomorphism.
\end{prop}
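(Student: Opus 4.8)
The plan is to reduce the assertion to the real rank one case and then read off the location of the bad parameters from the Gindikin--Karpelevich formula, the only subtle point being uniformity in $\sigma$. \emph{Rank one reduction.} Fix $\alpha\in\Pi$ and let $Q_\alpha = M_\alpha A_\alpha N_\alpha\supseteq P$ be the standard parabolic subgroup with split centre $A_\alpha=\exp(\ker\alpha\cap\af)$, so that $M_\alpha\supseteq M$ is a reductive group whose derived group has real rank one, with minimal parabolic $P\cap M_\alpha$, one-dimensional split torus $A\cap M_\alpha=\exp(\R\alpha^\vee)$, and $N_\alpha = N\cap M_\alpha = \exp(\gf^\alpha\oplus\gf^{2\alpha})$. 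Writing $\varpi_\alpha(\sigma,\lambda):=\Ind_{P\cap M_\alpha}^{M_\alpha}\big((\lambda|_{\af\cap\mf_\alpha})\otimes\sigma\big)$ for the corresponding rank one principal series of $M_\alpha$ and $j_\alpha(\sigma,\lambda)\colon \varpi_\alpha(s_\alpha\sigma,s_\alpha\lambda)\to\varpi_\alpha(\sigma,\lambda)$ for the rank one standard intertwining operator (the $\theta N_\alpha$-integral, meromorphically continued by the Knapp--Stein theory), induction in stages identifies $\pi_{\lambda,\sigma}$ with the parabolic induction from $Q_\alpha$ of $\varpi_\alpha(\sigma,\lambda)$ and $I_\alpha(\sigma,\lambda)$ with $\Ind_{Q_\alpha}^G j_\alpha(\sigma,\lambda)$; in particular $I_\alpha(\sigma,\lambda)$ depends on $\lambda$ only through $\lambda(\alpha^\vee)$, and since parabolic induction is exact, $I_\alpha(\sigma,\lambda)$ is an isomorphism if and only if $j_\alpha(\sigma,\lambda)$ is.

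\emph{Scalar composition.} Next I would use that the composition $j_\alpha(s_\alpha\sigma,s_\alpha\lambda)\circ j_\alpha(\sigma,\lambda)$ is an endomorphism of the Harish-Chandra module $\varpi_\alpha(\sigma,\lambda)$, hence, by Schur's lemma applied on the cofinite set of $\lambda$ for which this rank one principal series is irreducible and by meromorphic continuation, equals $\mu_\alpha(\sigma,\lambda)\cdot\mathrm{id}$ for a meromorphic scalar $\mu_\alpha(\sigma,\lambda)$ (and symmetrically with $\sigma$ and $s_\alpha\sigma$ exchanged, noting $s_\alpha\lambda(\alpha^\vee)=-\lambda(\alpha^\vee)$). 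Thus $j_\alpha(\sigma,\lambda)$ is an isomorphism whenever $j_\alpha(\sigma,\cdot)$ and $j_\alpha(s_\alpha\sigma,\cdot)$ are both holomorphic at the relevant points and $\mu_\alpha(\sigma,\lambda)\ne 0\ne\mu_\alpha(s_\alpha\sigma,s_\alpha\lambda)$. It remains to locate the pole divisor of $j_\alpha$ and the divisor of $\mu_\alpha$. Decomposing $\sigma|_{K\cap M_\alpha}$ into irreducibles and applying the Gindikin--Karpelevich formula for the rank one group $M_\alpha$, both $j_\alpha(\sigma,\lambda)$ (after stripping the normalising gamma factor) and $\mu_\alpha(\sigma,\lambda)$ are finite products of quotients of Gamma functions whose arguments have the form $\tfrac12\big(\lambda(\alpha^\vee)+q+k\big)$, where $q$ is a parameter attached to an irreducible constituent of $\sigma|_{K\cap M_\alpha}$ (a highest weight of a fixed compact group, evaluated at a fixed coweight) and $k$ ranges over a finite set of integers determined only by $\dim\gf^\alpha$ and $\dim\gf^{2\alpha}$. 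Hence all zeros and poles of $j_\alpha(\sigma,\cdot)$ and $\mu_\alpha(\sigma,\cdot)$ occur only at points with $\lambda(\alpha^\vee)\in q+\tfrac12\Z$.

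\emph{Uniformity and conclusion.} The possible values of $q$, as $\sigma$ ranges over $\hat M$ and $\alpha$ over $\Pi$, lie in a single lattice $\tfrac1{N_0}\Z$ depending only on $G$, because each such $q$ is the value of a weight of one of the finitely many fixed compact groups $K\cap M_\alpha$ on a fixed coweight, and because $M_\alpha$ is one of the finitely many types $\SL(2,\R)$, $\SL(2,\C)$, $\SU(1,n)$, $\Sp(1,n)$, $\SO(1,n)$, $F_{4(-20)}$ (times a torus and a finite group). Taking $N:=\operatorname{lcm}(2,N_0)$, which depends only on $G$, we get: if $\lambda(\alpha^\vee)\notin\frac1N\Z$ then $\lambda(\alpha^\vee)\notin q+\tfrac12\Z$ and $-\lambda(\alpha^\vee)\notin q+\tfrac12\Z$ for every admissible $q$, so $j_\alpha(\sigma,\cdot)$ and $j_\alpha(s_\alpha\sigma,\cdot)$ are holomorphic at $\lambda$ resp.\ $s_\alpha\lambda$ and the two scalars $\mu_\alpha$ are non-zero; therefore $j_\alpha(\sigma,\lambda)$, and with it $I_\alpha(\sigma,\lambda)$, is an isomorphism. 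The main obstacle I anticipate is exactly this last uniformity step: one must verify, by going through the (at most six) rank one types and the branching of $\sigma|_M$, that the $\sigma$-shifts in the Gindikin--Karpelevich gamma factors never leave a fixed lattice; the holomorphy and scalar-composition inputs, by contrast, are standard Knapp--Stein theory.
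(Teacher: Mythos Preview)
Your proposal is correct but takes a different route from the paper. Both arguments begin by reducing to rank one: the paper restricts to the connected rank one subgroup $G_\alpha$ generated by $\gf^{\pm\alpha}\oplus\gf^{\pm 2\alpha}$ and uses the Knapp--Stein identity $I_\alpha(\sigma,\lambda)f(e)=I^0_\alpha(\sigma_\alpha,\lambda_\alpha)(f|_{G_\alpha})(e)$, while you use induction in stages through the parabolic $Q_\alpha$; these are equivalent reductions.

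The genuine divergence is in locating the bad parameters. The paper never computes gamma factors. Instead it argues: if $I_\alpha$ fails to be injective (resp.\ surjective) then $I^0_\alpha$ does too, and since $I^0_\alpha$ is nonzero for $\lambda(\alpha^\vee)\notin\Z$, this forces the rank one principal series $\Ind_{P_\alpha}^{G_\alpha}(s_\alpha\lambda_\alpha\otimes s_\alpha\sigma_\alpha)$ (resp.\ its dual) to be reducible. Then the Speh--Vogan reducibility criterion is invoked: reducibility requires some $\beta\in\Sigma(\hf)$ with $(\xi,\nu)(\beta^\vee)\in\Z$, and a short arithmetic argument with root and weight lattices shows this forces $\lambda(\alpha^\vee)\in\frac{1}{18d^2}\Z$ where $d$ is the determinant of the Cartan matrix of $\mf$. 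This handles all $\sigma$ at once, uniformly, without any case analysis of rank one groups.

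Your approach via the scalar composition and explicit gamma factors is also valid, but the step you flag as the obstacle is exactly where the paper's argument is stronger: Speh--Vogan gives the lattice constraint directly from the root datum, whereas your route needs the explicit rank one Plancherel factors (Knapp--Stein or Wallach, not the spherical Gindikin--Karpelevich formula, which only treats trivial $\sigma$) and a check that the $\sigma$-shifts in those formulas are bounded by a fixed denominator. That check works, but it is a genuine case analysis rather than the one-line lattice argument the paper gives.
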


Before we prove the proposition, we first prove a lemma.

\begin{lemma}\label{Lemma irreducibility princ series}
Assume that the split rank of $G$ is equal to $1$ and let $\alpha$ be the simple root of $(\gf,\af)$.
There exists a $N\in\N$ such that for every $\sigma\in\hat{M}$ and $\nu\in\af_{\C}^{*}$ with $\nu(\alpha^{\vee})\notin\frac{1}{N}\Z$, the representation $\pi_{\nu,\sigma}$ is irreducible.
\end{lemma}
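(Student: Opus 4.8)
The plan is to bound, uniformly over $\sigma\in\hat M$, the locus of \emph{reducibility parameters}
$$R_\sigma:=\{\,z\in\C:\pi_{\nu,\sigma}\text{ is reducible for }\nu\text{ with }\nu(\alpha^\vee)=z\,\}\,,$$
and to show that $\bigcup_{\sigma}R_\sigma$ is contained in a single lattice $\tfrac1N\Z$ with $N=N(G)$; the lemma is then immediate. First I would recall the rank-one structure of the standard intertwining operators. Since $\dim\af=1$ we have $W=\{e,s_\alpha\}$ and $w_0=s_\alpha$, so the long intertwining operator is the rank-one operator $A_\alpha(\sigma,\nu)\colon\pi_{\nu,\sigma}\to\pi_{-\nu,s_\alpha\sigma}$, meromorphic in $\nu$, with normalizing scalar the Harish-Chandra $\mathbf c$-function $\mathbf c_\sigma(\nu)$. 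By Langlands' classification, when $\re\nu(\alpha^\vee)>0$ the image of $A_\alpha(\sigma,\nu)$ is both the unique irreducible quotient of $\pi_{\nu,\sigma}$ and the unique irreducible submodule of $\pi_{-\nu,s_\alpha\sigma}$; hence if $A_\alpha(\sigma,\nu)$ is holomorphic and bijective at such a $\nu$ then $\pi_{\nu,\sigma}\cong\pi_{-\nu,s_\alpha\sigma}$ is irreducible, and by the symmetry $(\sigma,\nu)\leftrightarrow(s_\alpha\sigma,-\nu)$ the same holds when $\re\nu(\alpha^\vee)<0$. Combining this with the Harish-Chandra functional equation $A_\alpha(s_\alpha\sigma,-\nu)\circ A_\alpha(\sigma,\nu)=c\,\mu_\sigma(\nu)^{-1}\,\mathrm{Id}$ ($c\neq0$, $\mu_\sigma$ the Plancherel density), standard rank-one intertwining-operator theory (Knapp--Stein, Harish-Chandra) shows that, for $\re\nu(\alpha^\vee)\neq0$, $\pi_{\nu,\sigma}$ can be reducible only when $z=\nu(\alpha^\vee)$ is a zero or a pole of $\mathbf c_\sigma$ or of $z\mapsto\mathbf c_{s_\alpha\sigma}(-z)$. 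The residual case $\re\nu(\alpha^\vee)=0$ is handled by the Knapp--Stein $R$-group theorem: a unitary principal series of a rank-one group is reducible only if $s_\alpha\sigma\cong\sigma$ and $\nu(\alpha^\vee)=0$, so this contributes only the point $0$.

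Next I would locate the zeros and poles of $\mathbf c_\sigma$ explicitly. In the rank-one case $\Sigma^+\subseteq\{\alpha,2\alpha\}$, so the Gindikin--Karpelevich product formula expresses $\mathbf c_\sigma(\nu)$, up to a nonzero constant, as a product of finitely many ratios of Gamma functions $\Gamma(a_j+b_jz)^{\pm1}$ in the single variable $z=\nu(\alpha^\vee)$, where the slopes $b_j$ are fixed nonzero rationals depending only on the normalization of the bilinear form and on $m_\alpha:=\dim\gf^\alpha$, $m_{2\alpha}:=\dim\gf^{2\alpha}$, while the shifts $a_j$ depend on $\sigma$ only through the pairing of the highest weight of $\sigma$ with coroots of the root system of the compact group $M$, together with a fixed $\rho$-shift and the integers $m_\alpha,m_{2\alpha}$. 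Since the highest weight of any $\sigma\in\hat M$ pairs integrally with those coroots, every ratio $a_j/b_j$ lies in one fixed lattice $\tfrac1{N_0}\Z$, $N_0=N_0(G)$; hence the zero and pole locus of $\mathbf c_\sigma$, viewed in the variable $z$, is contained in $\tfrac1{N_0}\Z$ for \emph{every} $\sigma\in\hat M$. Taking $N:=N_0$ (which also contains the unitary-axis point $0$), the previous paragraph gives: if $\nu(\alpha^\vee)\notin\tfrac1N\Z$ then $\re\nu(\alpha^\vee)\neq0$ and $z=\nu(\alpha^\vee)$ is neither a zero nor a pole of $\mathbf c_\sigma$ or of $z\mapsto\mathbf c_{s_\alpha\sigma}(-z)$, so $\pi_{\nu,\sigma}$ is irreducible.

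The delicate point is the $\sigma$-uniformity in the second step: one must make precise how an arbitrary $\sigma\in\hat M$ enters the rank-one $\mathbf c$-function (via the Harish-Chandra homomorphism applied to $\sigma$ and the highest and lowest $M$-types along $\alpha$) and verify that its contribution to the shifts $a_j$ always lands in a lattice independent of $\sigma$; the enabling fact is the integrality of the pairing of a dominant weight of the compact group $M$ with coroots. An alternative route that avoids the Gamma-function bookkeeping is to invoke the classification of rank-one real reductive Lie algebras: up to isogeny and compact and central factors, $\gf$ is one of $\so(n,1)$, $\su(n,1)$, $\sp(n,1)$, $\mathfrak f_{4(-20)}$, and for each of these the reducibility points of $\pi_{\nu,\sigma}$ are explicitly known and manifestly lie in a fixed lattice; passing to a central isogeny does not change $\af$, $\Sigma$ or $\alpha^\vee$, and restricting the admissible $\sigma$ to those occurring for the given $G$ only shrinks the reducibility set. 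Either way one obtains $N=N(G)$ as required.
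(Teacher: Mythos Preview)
Your strategy is sound and genuinely different from the paper's. The paper does not go through intertwining operators or $\mathbf c$-functions at all: it invokes the Speh--Vogan reducibility criterion \cite{SpVo} directly. Writing $\hf=\af\oplus i\tf$ for a $\theta$-stable Cartan, and $\gamma=(\xi,\nu)\in\tf_\C^*\oplus\af_\C^*$ with $\xi$ the Harish-Chandra parameter of a constituent of $\sigma|_{M_0}$, Speh--Vogan says reducibility forces $\gamma(\beta^\vee)\in\Z$ for some $\beta\in\Sigma(\gf_\C,\hf_\C)$ with $p_\af\beta\neq0$. Then $p_\af\beta\in\{\pm\alpha,\pm2\alpha\}$, and a short computation with the Cartan matrix determinant $d$ of $\Sigma(\mf_\C,\tf_\C)$ and the possible ratios $\|\beta\|^2/\|p_\af\beta\|^2\in\{1,\tfrac43,2,4\}$ yields the explicit bound $\nu(\alpha^\vee)\in\tfrac{1}{18d^2}\Z$. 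This gives $N$ directly and uniformly, with no appeal to classification or Gamma-function bookkeeping.

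Your argument has one soft spot you already flagged: what you call the ``Gindikin--Karpelevich product formula'' is, strictly speaking, the formula for the \emph{spherical} $\mathbf c$-function. For arbitrary $\sigma\in\hat M$ the Harish-Chandra $\mathbf c$-function (equivalently, the Plancherel density $\mu_\sigma$) does have an explicit Gamma-function expression in rank one, but this is a separate result (Harish-Chandra, Knapp--Stein, and explicit case analyses in Knapp's book), not Gindikin--Karpelevich. The assertion that the shifts $a_j$ depend on $\sigma$ only through integral pairings with $M$-coroots is exactly the substance of the lemma, and making it precise in your framework essentially reproduces the weight-lattice estimate the paper does via Speh--Vogan. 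Your fallback to the classification of real rank-one algebras is correct but case-by-case; the paper's route avoids both the classification and the $\mathbf c$-function analysis, and produces an explicit $N$ in a couple of lines.
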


\begin{proof}
Let $\tf$ be a maximal torus in $\mf$. Let $\hf=\af\oplus i\tf$. Then $\hf_{\C}$ is a Cartan subalgebra of $\gf_{\C}$.
We define $\Sigma(\hf)\subseteq\hf^{*}$ to be the set of roots of $(\gf_{\C},\hf_{\C})$, choose a positive system $\Sigma^{+}(\hf)$ and define
$$
\rho_{M}:=\frac{1}{2}\sum_{\substack{\beta\in\Sigma^{+}(\hf)\\ \beta|_{\af}=0}}\dim(\gf^{\beta})\beta\,.
$$
Let $\xi\in\tf_{\C}^{*}$ be the Harish-Chandra parameter of some constituent $\sigma_{0}$ of the restriction of $\sigma$ to the connected component of $M$. Then $\xi-\rho_{M}$ is the highest weight of $\sigma_{0}$.

We view $\tf_{\C}^{*}$ and $\af_{\C}^{*}$ as subspaces of $\hf_{\C}^{*}$ by extending the functionals trivially with respect to the decomposition $\hf_{\C}=\tf_{\C}\oplus\af_{\C}$.
We write $p_{\af}$ and $p_{\tf}$ for the restrictions $\af_{\C}$ and $\tf_{\C}$ respectively.
Let $\theta$ be the involutive automorphism on $\hf_{\C}$ that is $1$ on $\tf_{\C}$ and $-1$ on $\af_{\C}$. We denote the adjoint of $\theta$ by $\theta$ as well.

Now assume that $\pi_{\nu,\sigma}$ is not irreducible. We write $\gamma=(\xi,\nu)\in\tf_{\C}^{*}\oplus\af_{\C}^{*}$. By \cite[Theorem 1.1]{SpVo} there exists a $\beta\in\Sigma(\hf)$ such that $\gamma(\beta^{\vee})\in\Z$ and either
\begin{enumerate}[(a)]
\item\label{SpVo conditions item 1} $\gamma(\beta^{\vee})>0$, $\gamma(\theta\beta^{\vee})<0$ and $\theta\beta\neq-\beta$, or
\item\label{SpVo conditions item 2} $\theta\beta=-\beta$.
\end{enumerate}
Note that in both cases (\ref{SpVo conditions item 1}) and (\ref{SpVo conditions item 2}) $p_{\af}\beta$ is non-zero and is in fact a root of $(\gf,\af)$. Therefore, $p_{\af}\beta\in \{\pm\alpha,\pm2\alpha\}$.
Let $k\in\{\pm1,\pm2\}$ be such that $p_{\af}\beta=k\alpha$. Then
\begin{align*}
\nu(\alpha^{\vee})
=\frac{k\|\beta\|^{2}}{\|p_{\af}\beta\|^{2}}\frac{2\langle\nu,p_{\af}\beta\rangle}{\|\beta\|^{2}}
=\frac{k\|\beta\|^{2}}{\|p_{\af}\beta\|^{2}}\Big(\gamma(\beta^{\vee})-\frac{2\langle\xi,p_{\tf}\beta\rangle}{\|\beta\|^{2}}\Big)
\in\frac{k\|\beta\|^{2}}{\|p_{\af}\beta\|^{2}}\Big(\Z-\frac{2\langle\xi,p_{\tf}\beta\rangle}{\|\beta\|^{2}}\Big)\,.
\end{align*}

Let $d$ be the determinant of the Cartan matrix of the root system $\Sigma_{\mf}(\tf_{\C})$ of $\mf_{\C}$ in $\tf_{\C}$.
The lattice $\Lambda_{\mf}(\tf_{\C})$ of integral weights of $\mf_{\C}$ in $\tf_{\C}$ is contained in $\frac{1}{d}\Z[\Sigma_{\mf}(\tf_{\C})]$.
Note that $p_{\tf}\beta,\xi\in\Lambda_{\mf}(\tf_{\C})$.
Let $\mathit{l}$ be the square of the length of the shortest root in $\Sigma(\hf)$. Then $\|\Sigma(\hf)\|^{2}\subseteq\{\mathit{l},2\mathit{l},3\mathit{l}\}$ and $\langle\Sigma(\hf),\Sigma(\hf)\rangle\in\frac{\mathit{l}}{2}\Z$. Therefore,
$$
\langle\Lambda_{\mf}(\tf_{\C}),\Lambda_{\mf}(\tf_{\C})\rangle
\subseteq\frac{1}{d^{2}}\langle\Sigma(\tf_{\C}),\Sigma(\tf_{\C})\rangle
\subseteq\frac{\mathit{l}}{2d^{2}}\,\Z\,,
$$
and since $p_{\tf}\beta,\xi\in\Lambda_{\mf}(\tf_{\C})$,
$$
\frac{2\langle \xi,p_{\tf}\beta\rangle}{\|\beta\|^{2}}\in\frac{1}{6d^{2}}\,\Z\,.
$$
Since $\theta\beta\in\Sigma(\hf)$ and by the Cauchy-Schwartz inequality
$$
\frac{2\langle\beta,\theta\beta\rangle}{\|\beta\|^{2}}\in\{0,\pm1,\pm2\}\,.
$$
Taking into account that $0<\|p_{\af}\beta\|^{2}\leq \|\beta\|^{2}$ we obtain
$$
\frac{\|\beta\|^{2}}{\|p_{\af}\beta\|^{2}}
=\frac{2\|\beta\|^{2}}{\|\beta\|^{2}-\langle\beta,\theta\beta\rangle}
\in \{1,\frac{4}{3},2,4\}
$$
and thus
\begin{align*}
\nu(\alpha^{\vee})
\in\frac{k\|\beta\|^{2}}{\|p_{\af}\beta\|^{2}}\Big(\Z-\frac{2\langle\xi,p_{\tf}\beta\rangle}{\|\beta\|^{2}}\Big)
\subseteq\frac{1}{18d^{2}}\Z\,.
\end{align*}
\end{proof}

\begin{proof}[Proof of Proposition \ref{Prop intertwining operators}]
Let $N\in\N$ be as in Lemma \ref{Lemma irreducibility princ series}.
For $\alpha\in\Pi$ let $G_{\alpha}$ be the connected subgroup of $G$ with Lie algebra generated by the subspace $\gf^{-2\alpha}\oplus\gf^{-\alpha}\oplus\gf^{\alpha}\oplus\gf^{2\alpha}$ of $\gf$. Note that the real rank of $G_{\alpha}$ is equal to $1$. We define the subgroups
$$
A_{\alpha}:=A\cap G_{\alpha}\,,
\quad M_{\alpha}:=M\cap G_{\alpha}\,,
\quad P_{\alpha}:=P\cap G_{\alpha}\,.
$$
Write $\sigma_{\alpha}$ and $\lambda_{\alpha}$ for $\sigma\big|_{M_{\alpha}}$ and $\lambda\big|_{\af_{\alpha}}$ respectively.
Let $I^{0}_{\alpha}(\lambda_{\alpha},\sigma_{\alpha})$ be the standard intertwining operator
$$
I^{0}_{\alpha}(\lambda_{\alpha},\sigma_{\alpha}):
\Ind_{P_{\alpha}}^{G_{\alpha}}(s_{\alpha}\lambda_{\alpha}\otimes s_{\alpha}\sigma_{\alpha})
\to
\Ind_{P_{\alpha}}^{G_{\alpha}}(\lambda_{\alpha}\otimes\sigma_{\alpha})\,.
$$
By equation (17.8) in \cite{KnSt} we have
\begin{equation}\label{relation rank one intertwiners}
I_{\alpha}(\lambda,\sigma)f(e)
=I^{0}_{\alpha}(\lambda_{\alpha},\sigma_{\alpha})\big(f\big|_{G_{\alpha}}\big)(e)
\qquad\big(f\in \pi^{\infty}_{s_{\alpha}\lambda,s_{\alpha}\sigma}\big)\,.
\end{equation}

The poles of the meromorphic family $I^{0}_{\alpha}(\lambda_{\alpha},\sigma_{\alpha})$ are located at the $\lambda_{\alpha}\in\af_{\alpha}^{*}$ such that $\lambda_{\alpha}(\alpha^{\vee})\in-\N_{0}$. See \cite[Theorem 3]{KnSt}.
It follows from (\ref{relation rank one intertwiners}) that $I_{\alpha}(\lambda,\sigma)$ is defined for $\lambda(\alpha^{\vee})\notin -\N_{0}$.

Now assume that $\lambda(\alpha^{\vee})\notin\Z$. Let $\phi_{0}\in C_{c}^{\infty}(\oline N, V_{\sigma})$ be such that $\int_{\oline N\cap s_{\alpha}N}\phi_{0}(\oline n)\,d\oline n\neq0$. Define $\phi\in\pi^{\infty}_{\sigma\lambda},s_{\alpha\sigma}$ by setting $\phi\big|_{\oline N}=\phi_{0}$. Then the integral
$$
\int_{\oline N\cap s_{\alpha}N}\phi(\oline n)\,dn
$$
is absolutely convergent and non-zero. Hence $I_{\alpha}(\lambda,\sigma)\phi(e)$ is non-zero. In particular this shows that
both $I_{\alpha}(\lambda,\sigma)$ and $I^{0}_{\alpha}(\lambda_{\alpha},\sigma_{\alpha})$ are non-zero.

If $I_{\alpha}(\lambda,\sigma)$ is not injective, then there exists a $\phi\in \pi^{\infty}_{s_{\alpha}\lambda,s_{\alpha}\sigma}$ such that $I_{\alpha}(\lambda,\sigma)\phi=0$ and $\phi(e)\neq 0$. It then follows from (\ref{relation rank one intertwiners}) that $I_{\alpha}^{0}(\lambda_{\alpha},\sigma_{\alpha})$ is not injective either. Since $I_{\alpha}^{0}(\lambda_{\alpha},\sigma_{\alpha})$ is non-zero,  $\Ind_{P_{\alpha}}^{G_{\alpha}}(s_{\alpha}\lambda_{\alpha}\otimes s_{\alpha}\sigma_{\alpha})$ is not irreducible.
Similarly, if $I_{\alpha}(\lambda,\sigma)$ is not surjective, then its adjoint $I_{\alpha}(\lambda,\sigma)^{*}=I_{\alpha}(-s_{\alpha}\lambda,s_{\alpha}\sigma^{\vee})$ is not injective, hence it follows that $\Ind_{P_{\alpha}}^{G_{\alpha}}(-\lambda_{\alpha}\otimes\sigma_{\alpha}^{\vee})$ is not irreducible.
It now follows from Lemma \ref{Lemma irreducibility princ series} that if $I_{\alpha}(\lambda,\sigma)$ is not an isomorphism then  $\lambda(\alpha^\vee)\in \frac{1}{N}\Z$.
\end{proof}

\end{document}